\def \bR {\mathbb{R}}
\def \R {\mathbb{R}}
\def \Z {\mathbb{Z}}
\def \conv {\text{conv}}
\def \cS {\mathcal{S}}
\def \XJD {X^{0,u}}
\def \XJDd {X^{l,u}}
\def \XJd {X^{l,n}}
\def \DJD {\Delta^{0,u}}
\def \DJDd {\Delta^{l,u}}
\def \DJd {\Delta^{l,n}}
\def \goodfamily {proper} 
\def \Goodfamily {Proper} 
\def\mswcc{ multilinear set with cardinality constraints}
\def \mydelta{\boldsymbol{\delta}}
\DeclareMathOperator{\proj}{proj}
\newtheorem{thm}{Theorem}
\newtheorem{lem}[thm]{Lemma}
\newtheorem{prop}[thm]{Proposition}
\newtheorem{rmk}[thm]{Remark}
\theoremstyle{definition}
\newtheorem{defn}{Definition}
\def\alphapattern{pattern of $\alpha$} 
\newtheorem{example}{Example}
\title{Convexifying Multilinear Sets with Cardinality Constraints: Structural Properties, Nested Case and Extensions}
\author{Rui Chen \\ University of Wisconsin-Madison \\[-.2cm]\small (rchen234@wisc.edu) \and 
	Sanjeeb Dash\\ IBM Research \\[-.2cm]\small (sanjeebd@us.ibm.com) \and
	Oktay G\"{u}nl\"{u}k \\ Cornell University \\[-.2cm] \small (ong5@cornell.edu)}
\date{\today}
\begin{document}
\maketitle\allowdisplaybreaks

\begin{abstract}
	The problem of minimizing a multilinear function of binary variables is a well-studied NP-hard problem. The set of solutions of the standard linearization of this problem is called the multilinear set. We study a cardinality constrained version of it with upper and lower bounds on the number of nonzero  variables. We call the set of solutions of the standard linearization of this problem a  \mswcc. We characterize a set of conditions on these  multilinear terms (called {\em properness}) and observe that under these conditions the convex hull description of the set is tractable via an extended formulation. We then give an explicit  polyhedral description of the convex hull when the multilinear terms have a nested structure. Our description has an exponential number of inequalities which can be separated in polynomial time. Finally, we generalize these inequalities to obtain valid inequalities for the general case.
\end{abstract}
\section{Introduction}
In this paper, we study the convex hull of the set
\[ X =\Big\{(x,\delta)\in\{0,1\}^{n}\times\{0,1\}^{m}:\delta_i=\prod_{j\in S_i}x_j,~i=1,\ldots,m,~L\leq\sum_{j=1}^nx_j\leq U\Big\},
\]
where $m,n$ are positive integers, $S_i \subseteq J = \{1, \ldots, n\}$ for $i=1,\ldots,m$ and $L,U$ are integers such that $0 \leq L \leq U \leq n$.
We call $X$ the {\em\mswcc}.
We investigate the structural properties of $\conv(X)$, give a polyhedral characterization in the special case that the sets $S_i$ are nested, i.e., $S_1 \subset S_2 \subset \cdots \subset S_m$, and give a family of valid inequalities for the non-nested case.

The problem of minimizing a polynomial objective function of binary variables subject to polynomial constraints is called the binary polynomial optimization problem, and is  often solved by formulating it as an integer linear programming problem.
The first step in creating such a formulation is to replace each polynomial function by an equivalent (for all $x \in \{0,1\}^n$) multilinear expression of the form
\begin{displaymath}
f(x)=\beta+\sum_{i=1}^m \gamma_i\prod_{j\in S_i} x_j
\end{displaymath}
where $S_i \subseteq \{1, \ldots, n\}$ for $i=1,\ldots, m$, $\beta\in\R$, and $\gamma\in\R^m$.
Minimizing $f(x)$ over the set of binary $x$-vectors is equivalent to minimizing the linear expression $\beta + \sum_{i=1}^m \gamma_i \delta_i$ over
the set $Y$, obtained from $X$ by dropping the cardinality constraints:
\[ Y =\Big\{(x,\delta)\in\{0,1\}^{n}\times\{0,1\}^{m}:\delta_i=\prod_{j\in S_i}x_j,~i=1,\ldots,m\Big\}.
\]
Set $Y$ is called the {\em multilinear set} and is well-studied in mixed-integer nonlinear optimization because of the connection to binary polynomial optimization.

The convex hull of the multilinear set is called the {\em multilinear polytope} \cite{del2017polyhedral}, and several classes of valid inequalities for the multilinear polytope have been proposed recently \cite{crama2017class,del2017polyhedral,del2018decomposability,del2018multilinear,del2021running}.
The {\em boolean quadric polytope} \cite{padberg1989boolean} is equal to $\conv(Y)$ when $|S_i|=2$ for $i=1, \ldots, m$.
A complete polyhedral characterization of $\conv(Y)$ has been given in some cases, for example, when the multilinear set is defined by a single nonlinear monomial ($m = 1$), see \cite{crama1993concave}, or  when the multilinear sets are associated with certain acyclic hypergraphs \cite{del2018multilinear,del2021running, del2020complexity}.
When the nonlinear monomials have a nested structure, i.e., the sets $S_i$ have a nested structure, then the convex hull equals the {\em 2-link polytope}, which is obtained by augmenting the standard linearization constraints with the 2-link inequalities \cite{crama2017class}. This result follows from the work of Fischer, Fischer and McCormick \cite{fischer2018matroid}.
The problem of minimizing a linear function over $Y$ contains as a special case the {\em maximum monomial agreement} problem which has been analyzed in the context of machine learning \cite{dgm, dbs, eg2, eg3} and solved via branch-and-bound methods and heuristics.

Mehrotra \cite{m1997} studied the boolean quadric polytope with an upper bound constraint on the number of nonzero $x_j$ variables, i.e., the set $X$ with $|S_i| = 2$ and $L = 0$.
When the nonlinear monomials have a nested structure, 
Fischer, Fischer and McCormick \cite{fischer2018matroid} gave a polyhedral description of the convex hull of $V = \{(x, \delta) \in Y : x \in \mathcal{M}\}$, where $\mathcal{M}$ is the independent set polytope of a matroid over $n$ elements, generalizing earlier results by Buchheim and Klein \cite{bk} and Fischer and Fischer \cite{ff}.
When the matroid is a uniform matroid, $V$ is the same as $X$ with $L = 0$.
In this paper, we study the case when the nonlinear monomials have a nested structure and $L > 0$. Our results do not follow from the work of Fischer, Fischer and McCormick mentioned above.
Minimizing a linear function over $X$ generalizes the cardinality constrained maximum monomial agreement problem, which was studied in the context of binary classification in machine learning by Dash, G\"unl\"uk and Wei \cite{dash2018boolean}.

When $m \geq 2$ and the sets $S_i$ are not nested, $X$ is quite a complicated object. We have given a  complete characterization of the convex hull of $X$ when $m = 2$  in \cite{chen2021multilinear}. This generalizes the work of Crama and Rodr\'iguez-Heck \cite{crama2017class} who showed that the 2-link polytope is equal to $\conv(Y)$ when $m = 2$.
In this paper, we give a general family of valid inequalities for the case $m \geq 2$.

The paper is organized as follows. In Section 2, we study general multilinear sets with cardinality constraints and give some facetial conditions of the convex hull under a set of properness assumptions. In Section 3, we propose new valid inequalities and give a complete polyhedral description of the convex hull for cases when $\{S_i\}_{i=1}^m$ are nested. In Section 4, we give necessary conditions and sufficient conditions for a set of valid inequalities to define facets of the convex hull for the nested case. In Section 5, we generalize these valid inequalities to the non-nested case.

\section{Preliminaries}

Let $I=\{1,\ldots,m\},~ J=\{1,\ldots,n\}$, $0 \leq l\leq u$ and $u\geq 2$.
Let $S_1, \ldots, S_m$ be distinct subsets of $J$ with $1 \leq |S_i| \leq n-l$ for $i=1, \ldots, m$. Note that the assumptions imply that $n-l\geq 1$. Define $\cS:=\{S_i\}_{i\in I}$. We will study the set
\begin{displaymath}
\XJDd:=\Big\{(z,\delta)\in\{0,1\}^{n}\times\{0,1\}^{m}:\delta_i=\prod_{j\in S_i}(1-z_j),~i\in I,~l\leq\sum_{j\in J}z_j\leq u \Big\},
\end{displaymath}
which is equivalent to the set $X$ in the previous section (let $z_j = 1-x_j$, $l=n-U$ and $u=n-L$).

The standard linearization of the set $\XJDd$ is given by the following system of inequalities: 
\begin{align}
l\leq\sum_{j\in J}z_j\leq&~ u,\label{xmc0}\\
z_j+\delta_i\leq& ~1, && j\in S_i,i\in I,\label{xmc1}\\
\delta_i+\sum_{j\in S_i}z_j\ge& ~1, &&i\in I,\label{xmc2}\\
\delta_i\geq& ~0, &&i\in I,\label{xmc6}\\
1~\ge z_j\geq& ~0, &&j\in J\label{xmc7}.
\end{align}

We say that $\cS$ is closed under nonempty intersection if for each pair $S_i, S_j \in \cS$ such that $S_i \cap S_j \neq \emptyset$, their intersection $S_i \cap S_j$ is also contained in $\cS$.
Let $\DJDd = \proj_{\delta}(\XJDd)$ denote the orthogonal projection of $\XJDd$ onto the space of $\delta$ variables.
The next result gives a simple characterization of the convex hull of the set
\[ \XJDd(\bar\delta) = \{z \in \{0,1\}^n: (z,\bar\delta)\in\XJDd\} \]
for each $\bar\delta\in\DJDd$ under the assumption that $\cS$ is closed under nonempty intersection.
\begin{lem}\label{lem:disj}
	If $\cS$ is closed under nonempty intersection, then for each $\bar \delta\in \DJDd$, there exists a subset $I^*$ of $I$ and disjoint subsets $J_0$ and $\{J_i : i \in I^*\}$ of $J$ such that $\conv(\XJDd(\bar\delta))$ is defined by the inequalities
	\begin{align}
	&&z_j=&~0,&j&\in J_0,\label{c11}&&\\
	&&\sum_{j\in J_i}z_j\geq& ~1,&i&\in I^*,&&\label{c122}\\
	&& l \leq \sum_{j \in J} z_j \leq& u &&\label{c13}\\
	&& 0 \leq z_j \leq& 1, &j& \in J && \label{c14}
	\end{align}        
\end{lem}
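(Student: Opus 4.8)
The plan is to fix $\bar\delta\in\DJDd$, write down explicit candidates for $J_0$ and $\{J_i:i\in I^*\}$, and then argue that the polytope $P$ cut out by \eqref{c11}--\eqref{c14} is integral with every vertex in $\XJDd(\bar\delta)$. Partition $I$ into $I_1=\{i\in I:\bar\delta_i=1\}$ and $I_0=I\setminus I_1$, and put $J_0:=\bigcup_{i\in I_1}S_i$. Since $\bar\delta\in\DJDd$ there is $z^*\in\{0,1\}^{n}$ with $(z^*,\bar\delta)\in\XJDd$; for $i\in I_1$ the equation $\delta_i=\prod_{j\in S_i}(1-z_j)=1$ forces $z^*_j=0$ on $S_i$, so $z^*$ vanishes on $J_0$, and hence $S_i\not\subseteq J_0$ for every $i\in I_0$ (otherwise $\bar\delta_i$ would be $1$). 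Expanding the products shows that, for $z\in\{0,1\}^{n}$, one has $(z,\bar\delta)\in\XJDd$ exactly when $z_j=0$ for $j\in J_0$, $\sum_{j\in S_i\setminus J_0}z_j\ge 1$ for all $i\in I_0$, and $l\le\sum_{j\in J}z_j\le u$.

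I would then take the family $\mathcal F:=\{S_i\setminus J_0:i\in I_0\}$, which consists of nonempty sets by the previous paragraph, and let $I^*\subseteq I_0$ be a set of indices realizing exactly the inclusion-minimal members of $\mathcal F$, one index per minimal member; set $J_i:=S_i\setminus J_0$ for $i\in I^*$. Then every element of $\mathcal F$ contains some $J_i$, $i\in I^*$. The key point --- and the one place the hypothesis on $\cS$ is used --- is that the sets $\{J_0\}\cup\{J_i:i\in I^*\}$ are pairwise disjoint. Disjointness from $J_0$ is immediate; for the rest, suppose $J_i\cap J_{i'}\neq\emptyset$ for distinct $i,i'\in I^*$. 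Then $S_i\cap S_{i'}\neq\emptyset$, so by closedness under nonempty intersection $S_i\cap S_{i'}=S_k\in\cS$ for some $k$, and $S_k\setminus J_0=(S_i\setminus J_0)\cap(S_{i'}\setminus J_0)=J_i\cap J_{i'}$ is nonempty and, since $J_i$ and $J_{i'}$ are two distinct minimal members of $\mathcal F$, strictly contained in $J_i$. As $S_k\not\subseteq J_0$ we get $k\notin I_1$, so $k\in I_0$ and $S_k\setminus J_0$ is a member of $\mathcal F$ strictly inside the minimal member $J_i$, a contradiction. This disjointness argument is where I expect the real work to be.

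It remains to prove $P=\conv(\XJDd(\bar\delta))$ for $P$ defined by \eqref{c11}--\eqref{c14}. Validity of \eqref{c11}--\eqref{c14} on $\XJDd(\bar\delta)$ follows from the characterization above: for $i\in I^*\subseteq I_0$ there is $j\in S_i$ with $z_j=1$, and such a $j$ cannot lie in $J_0$, so $\sum_{j\in J_i}z_j\ge 1$; hence $P\supseteq\conv(\XJDd(\bar\delta))$. For the reverse inclusion, eliminate the variables $z_j$ ($j\in J_0$) fixed to $0$ by \eqref{c11}; ordering the remaining columns so that each $J_i$ ($i\in I^*$) is a contiguous block followed by the leftover indices makes the matrix of \eqref{c122}--\eqref{c14} have the consecutive-ones property (the cardinality constraints \eqref{c13} are all-ones rows and each row of \eqref{c122} is the indicator of one block), so it is totally unimodular and $P$ is integral. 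Finally, any $z\in P\cap\{0,1\}^{n}$ lies in $\XJDd(\bar\delta)$: $z$ vanishes on $J_0\supseteq S_i$ for $i\in I_1$, so $\prod_{j\in S_i}(1-z_j)=1=\bar\delta_i$; for $i\in I_0$, choosing $i'\in I^*$ with $J_{i'}\subseteq S_i\setminus J_0$, inequality \eqref{c122} gives some $j\in S_i$ with $z_j=1$, so $\prod_{j\in S_i}(1-z_j)=0=\bar\delta_i$; and \eqref{c13} gives the cardinality bounds. Thus every vertex of $P$ belongs to $\XJDd(\bar\delta)$, so $P\subseteq\conv(\XJDd(\bar\delta))$, which finishes the proof.
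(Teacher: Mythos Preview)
Your proof is correct and follows essentially the same route as the paper: define $J_0=\bigcup_{i:\bar\delta_i=1}S_i$, reduce the product constraints to covering constraints over $S_i\setminus J_0$ for $i\in I_0$, keep only minimal members, use closedness under nonempty intersection to get disjointness, and finish with total unimodularity. The only cosmetic differences are that the paper takes $I^*$ to index minimal sets in $\{S_i:i\in I_0\}$ rather than in $\{S_i\setminus J_0:i\in I_0\}$ (which yields the same constraint family), and certifies TU via an equitable row bi-coloring rather than your consecutive-ones argument.
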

\begin{proof}
	Let $\bar{\delta}\in\DJDd$. Then a binary vector $z \in \XJDd(\bar\delta)$ if and only if (\ref{c13}) is satisfied and 
	\begin{align}
	\prod_{j\in S_i}(1-z_j)=&\bar{\delta}_i,\quad i\in I.\label{c01}
	\end{align}
	Let $I_0:=\{i\in I:\bar{\delta}_i=0 \}$ and let $J_0:=\bigcup_{i\in I: \bar\delta_i=1}S_i$.
	Note that \eqref{c01}  is equivalent to (\ref{c11}) and the inequalities
	\begin{align}
	&&\sum_{j\in S_i}z_j\geq& ~1,&i&\in I_0,&&\label{c12}
	\end{align}	  
	Let $ I^*\subseteq I_0$ denote the index set of minimal elements (with respect to inclusion) of $\{S_i: i\in I_0\}$. 
	Then replacing $I_0$ by $I^*$ in (\ref{c12}) yields an equivalent set of constraints.
	
	For each $i\in  I^*$, let
	\begin{displaymath}
	J_i:=S_i\setminus J_0.
	\end{displaymath}
	Then $\emptyset \neq J_i \subseteq S_i$. The nonemptyness of $J_i$ for $i \in I^* \subseteq I_0$ follows from the fact that $\bar \delta_i = 0$ and for some $j \in S_i$ we must have $z_j = 1$. But for all $j \in J_0$, we must have $z_j = 0$.
	Therefore, any nonzero binary vector $z$ that satisfies equations \eqref{c11} will  also satisfy \eqref{c12} if and only if (\ref{c122}) is satisfied.
	
	We next argue that $J_0$ and $\{J_i\}_{i\in  I^*}$ are disjoint sets. 
	By definition, $J_0 \cap J_i = \emptyset$, for all $i\in  I^*$.
	For any $i_1 < i_2\in  I^*$, if $J_{i_1}\cap J_{i_2}\neq\emptyset$, then  we must have $\emptyset \neq S_{i_1}\cap S_{i_2}\in \cS$, as $\cS$ is closed under nonempty intersection. Therefore $S_{i_1}\cap S_{i_2} = S_{i_3}$ for some  $i_3\in I$, and $S_{i_3} \supseteq J_{i_1}\cap J_{i_2}$. Then $S_{i_3} \setminus J_0 \neq \emptyset$, and therefore $\bar\delta_{i_3} = 0$.
	Consequently,  $i_3\in I_0$, which contradicts the fact that $S_{i_1}$ (or $S_{i_2}$) is a minimal element  in $\{S_i: i\in I_0\}$. 
	
	We have shown that if $z\in \{0,1\}$, then $z \in \XJDd(\bar \delta)$ if and only if $z$ satisfies the constraints in (\ref{c11})-(\ref{c14}).
	Note that the constraint matrix associated with this system of inequalities is totally unimodular. 
	This is because each $z_j$ occurs once in  (\ref{c13}) and possibly once more in \eqref{c11} or (\ref{c122}) and therefore the rows of the associated constraint matrix admits an equitable row bi-coloring \cite{conforti2014integer}.	
	Therefore the polyhedron defined by (\ref{c11})-(\ref{c14}) is an integral polyhedron, and has only 0-1 vertices. The result follows.
\end{proof}

The previous result implies that if $\cS$ is closed under nonempty intersection and $\DJDd$ has polynomially many elements, 
optimizing a linear function over $\XJDd$ can be formulated as a linear program of polynomial size using Balas' disjunctive model \cite{balas1979disjunctive}. In particular, optimizing a linear function over $\XJDd$ is equivalent to optimizing linear functions over $\conv(\XJDd(\bar{\delta}))$ for all $\bar{\delta}\in\DJDd$. However, we are interested in characterizing $\conv(\XJDd)$ in the original space in order to deal with problems where $\XJDd$ appears as a substructure.

\subsection{{\Goodfamily} families}
We next present a definition where we call $\cS$ that defines $\XJDd$ a  \textit{{\goodfamily} family} if it satisfies some simple conditions. We will then show that inequalities that define $\conv(\XJDd)$ satisfy certain properties if $\cS$ is a {\goodfamily} family.

\begin{defn}\label{DCdef}
	A family $\cS=\{S_i\}_{i\in I}$ of subsets of $J$ is called a \textit{{\goodfamily} family} if it satisfies the following properties:\begin{enumerate}
		\item $\DJDd$ is a set of exactly $m+1$ affinely independent vectors in $\bR^{m}$;
		\item $\cS$ is closed under nonempty intersection.
	\end{enumerate} 
\end{defn}
Note that if $\cS$ is a proper family, then it is closed under nonempty intersection and the size of  $\DJDd$ is polynomial in $m$ and consequently a polynomial-sized extended formulation of  $\conv(\XJDd)$ can be obtained using Balas' disjunctive model.
In particular, we will show that if $\cS$ is proper, then we can characterize $\conv(\XJDd)$ by enumerating a set of valid inequalities.
We next present three examples of {\goodfamily} families $\cS$ together with the corresponding sets $I^*$ and $J_i$ for $i\in  I^*\cup\{0\}$, for each $\delta\in\DJDd$.
\begin{example}\label{nestedexample}
	If $S_1,S_2,\ldots,S_m$ are nested subsets of $J$, $l\leq n-|S_m|$ and $u\geq 2$, then $\cS=\{S_1,S_2,\ldots,S_m\}$ is {\goodfamily}. 
	In this case,  $S_1\subset S_2\subset\ldots\subset S_m$ and  $\DJDd=\{\delta\in\{0,1\}^m:\delta_1\geq\delta_2\geq\ldots\geq\delta_m\}$ is a set of $m+1$ affinely independent points in $\bR^{m}$. For each $\delta\in \DJDd$, $J_0$ and $\{J_i\}_{i\in I^*}$ are defined as follows.\begin{displaymath}
	\begin{array}{c|c|c}
	\delta &~~J_0~~ & \{J_i\}_{i\in I^*}\\
	\hline
	(0,0,\ldots,0) & \emptyset & \{S_1\}\\
	(\underbrace{1,\ldots,1}_{\text{first $p$ entries}},0,\ldots,0) \text{ for some $1\leq p\leq m-1$} & S_p &\{S_{p+1}\setminus S_p\}\\
	(1,1,\ldots,1) & S_m & \emptyset
	\end{array}
	\end{displaymath}
\end{example}

\begin{example}
	If $S_1,S_2$ are two disjoint subsets of $J$, $l\leq n-|S_1\cup S_2|$ and $u\geq 2$, then $\cS=\{S_1, S_2, S_1\cup S_2 \}$ is {\goodfamily}. In this case, $\DJDd=\{(0,0,0),(1,0,0),(0,1,0),(1,1,1)\}$ is a set of 4 affinely independent points in $\bR^3$. For each $\delta\in \DJDd$, $J_0$ and $\{J_i\}_{i\in I^*}$ are defined as follows.\begin{displaymath}
	\begin{array}{c|c|c}
	\delta & J_0 & \{J_i\}_{i\in I^*}\\
	\hline
	(0,0,0) & \emptyset & \{S_1,S_2\}\\
	(1,0,0) & S_1 & \{S_2\}\\
	(0,1,0) & S_2 & \{S_1\}\\
	(1,1,1) & S_1\cup S_2 & \emptyset
	\end{array}
	\end{displaymath}
\end{example}

\begin{example}
	
	If $S_1$ and $S_2$ are two subsets of $J$ satisfying $S_1\cap S_2\neq\emptyset,S_1\nsubseteq S_2,S_2\nsubseteq S_1$, $l\leq n-|S_1\cup S_2|$ and $u\geq 2$, then $\cS=\{S_1\cap S_2,S_1,S_2, S_1\cup S_2\}$ is {\goodfamily}. In this case, $\DJDd=\{(0,0,0,0),(1,0,0,0),(1,1,0,0),$ $(1,0,1,0),(1,1,1,1)\}$ is a set of 5 affinely independent points in $\bR^4$. For each $\delta\in \DJDd$, $J_0$ and $\{J_i\}_{i\in I^*}$ are defined as follows.\begin{displaymath}
	\begin{array}{c|c|c}
	\delta & J_0 & \{J_i\}_{i\in I^*}\\
	\hline
	(0,0,0,0) & \emptyset & \{S_1\cap S_2\}\\
	(1,0,0,0) & S_1\cap S_2 & \{S_1\setminus S_2,S_2\setminus S_1\}\\
	(1,1,0,0) & S_1 & \{S_2\setminus S_1\}\\
	(1,0,1,0) & S_2 & \{S_1\setminus S_2\}\\
	(1,1,1,1) & S_1\cup S_2 & \emptyset
	\end{array}
	\end{displaymath}
\end{example}
\vskip.5cm
We next present an alternate way to certify that $\cS$ is a  \textit{{\goodfamily} family.}  We say that $\cS$ is closed under union if for each pair $S_i,S_j\in \cS$, their union $S_i\cup S_j$ is also contained in $\cS$.

\begin{prop}\label{prop:prop}
	Let  $\cS$ be a family of nonempty subsets of $J$ that is closed under union and nonempty intersection.
	Then,  $\cS$ is a {\goodfamily} family provided that $\DJDd=\Delta^{0,n}$.
\end{prop}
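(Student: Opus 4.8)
The plan is to verify the two conditions of Definition~\ref{DCdef} directly. The second condition, that $\cS$ be closed under nonempty intersection, is part of the hypothesis, so nothing is needed there. For the first condition I must show that $\DJDd$ is a set of exactly $m+1$ affinely independent vectors in $\bR^m$; since $\DJDd=\Delta^{0,n}$ is assumed, it suffices to prove this for $\Delta^{0,n}$, i.e. to count and locate the achievable $\delta$-patterns in the absence of any effective cardinality bound.

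First I would recast $\Delta^{0,n}$ in set-theoretic terms. For $z\in\{0,1\}^n$ let $W=\{j\in J:z_j=0\}$; then $\prod_{j\in S_i}(1-z_j)=1$ precisely when $S_i\subseteq W$. Writing $\delta^W\in\{0,1\}^m$ for the vector with $\delta^W_i=1$ iff $S_i\subseteq W$, and $A(W):=\{i\in I:S_i\subseteq W\}$ for its support, this gives $\Delta^{0,n}=\{\delta^W:W\subseteq J\}$. So the whole question reduces to understanding the distinct supports $A(W)$ as $W$ ranges over subsets of $J$.

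The heart of the argument — and the step I expect to be the main obstacle — is to show that, apart from the empty support, every $A(W)$ is a principal down-set of $(\cS,\subseteq)$. Since all $S_i$ are nonempty, $A(\emptyset)=\emptyset$, so $\zero\in\Delta^{0,n}$. If $A(W)\neq\emptyset$, set $S^*:=\bigcup_{i\in A(W)}S_i$; because $\cS$ is closed under union, $S^*\in\cS$, and clearly $S^*\subseteq W$. One inclusion of $A(W)=\{i\in I:S_i\subseteq S^*\}$ is immediate from the definition of $S^*$, and the other follows from $S^*\subseteq W$. Conversely, for each $k\in I$ the set $\{i\in I:S_i\subseteq S_k\}$ is attained by $W=S_k$; these $m$ sets are pairwise distinct (if $S_k\not\subseteq S_{k'}$ then $k$ lies in the first but not the second), and none of them is empty (each contains $k$). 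Hence $\Delta^{0,n}=\{\zero\}\cup\{\delta^{S_k}:k\in I\}$ has exactly $m+1$ elements, so $|\DJDd|=m+1$. I should stress that closure under union is essential here: for $\cS=\{\{1\},\{2\}\}$ one already obtains the four supports $\emptyset,\{1\},\{2\},\{1,2\}$, and the bound of $m+1$ fails.

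Finally, for affine independence: since $\zero\in\Delta^{0,n}$, it suffices to check that $\delta^{S_1},\dots,\delta^{S_m}$ are linearly independent. I would relabel so that $S_1,\dots,S_m$ is a linear extension of inclusion, i.e. $S_i\subsetneq S_k$ implies $i<k$. Then the $m\times m$ matrix $M$ with $M_{ki}=1$ if $S_i\subseteq S_k$ and $M_{ki}=0$ otherwise has $M_{kk}=1$ for every $k$ and $M_{ki}=0$ whenever $i>k$ (distinct sets with $S_i\subseteq S_k$ satisfy $S_i\subsetneq S_k$, hence $i<k$), so it is lower triangular with unit diagonal and $\det M=1\neq0$. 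This establishes property~1 of Definition~\ref{DCdef}, while property~2 holds by hypothesis, so $\cS$ is a {\goodfamily} family.
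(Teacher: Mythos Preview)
Your argument is correct and takes a genuinely different route from the paper. The paper proves property~1 by induction on $m$: it picks a minimal $S_1\in\cS$, partitions the remaining sets into those disjoint from $S_1$ and those containing it, and shows that $\Delta^{0,n}$ decomposes accordingly into two pieces to which the inductive hypothesis applies. Your proof, by contrast, is direct: you identify every nonzero element of $\Delta^{0,n}$ with the indicator of a principal down-set $\{i:S_i\subseteq S_k\}$ of the poset $(\cS,\subseteq)$, and then read off linear independence from a triangular matrix after passing to a linear extension.

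Two remarks on the comparison. First, your argument is more elementary and arguably more transparent: the triangular-matrix step makes the affine independence immediate, whereas the paper has to track how the two inductive pieces fit together. Second, your proof of property~1 actually uses only closure under union (to guarantee $S^*=\bigcup_{i\in A(W)}S_i\in\cS$) and never invokes closure under nonempty intersection; the paper, on the other hand, uses intersection closure in the inductive step to conclude that a set not containing the minimal $S_1$ must be disjoint from it. So your argument establishes a slightly sharper statement for property~1, though of course intersection closure is still needed for property~2 of Definition~\ref{DCdef}.
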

\begin{proof}
	It suffices to show that $\Delta^{0,n}$ satisfies property 1 in Definition \ref{DCdef} if $\cS$ is closed under union and nonempty intersection. We show this by induction on $m$. When $m=1$, then we have $\Delta^{0,n}=\{0,1\}$ and the statement holds.
	
	For the inductive step, assume the statement holds for all $\cS$ with $m\leq k$ for a given $k\geq 1$. We will next show that the statement then  also holds for $k+1$.
	Let $\cS'=\{S_i\}_{i\in I'}$ be a family of distinct nonempty subsets of $J$ that are closed under union and nonempty intersection with $m':=|I'|=k+1$. Without loss of generality, assume $S_{1}$ is a minimal set (with respect to inclusion) in $\cS'$. Let $I_1:=\{i\in I':S_i\nsupseteq S_{1}\}=\{i\in I':S_i\cap S_{1}=\emptyset\}$ and $I_2:=\{i\in I':S_i\supsetneq S_{1}\}$, $\cS_1:=\{S_i\}_{i\in I_1}$ and $\cS_2:=\{S_i\setminus S_{1}\}_{i\in I_2}$. Note that both $\cS_1$ and $\cS_2$ are closed under union and nonempty intersection.
	
	Define\begin{align*}
	\Delta':=&\Big\{\delta\in\{0,1\}^{I'}:\exists z\in\{0,1\}^n\text{ s.t. }\delta_i=\prod_{j\in S_i}(1-z_j),i\in I'\Big\},\\
	\Delta_1:=&\Big\{\delta\in\{0,1\}^{I_1}:\exists z\in\{0,1\}^n\text{ s.t. }\delta_i=\prod_{j\in S_i}(1-z_j),i\in I_1\Big\},\\
	\Delta_2:=&\Big\{\delta\in\{0,1\}^{I_2}:\exists z\in\{0,1\}^n\text{ s.t. }\delta_i=\prod_{j\in S_i\setminus S_{1}}(1-z_j),i\in I_2\Big\}.
	\end{align*}
	Family $\cS_2$ cannot be empty as $S_1$ is minimal and $\cS'$ is closed under union with $m'= k+1\geq 2$. Now we consider two cases.
	
	First assume that $\cS_1=\emptyset$, then $|\cS_2|=|\cS'\setminus\{S_1\}|=k$. Therefore, by inductive hypothesis, $\Delta_2$ contains exactly $k+1(=m')$ affinely independent points. Then\begin{displaymath}
	\Delta'=\{0\}\cup\{(1,\delta):\delta\in \Delta_2\}
	\end{displaymath}
	is a set of $m'+1$ affinely independent points.
	
	Next, consider the case when $\cS_1\neq\emptyset$. In this case, $\cS_1$ and $\cS_2$ are closed under union and nonempty intersection with $m_1:=|I_1|\leq k$ and $m_2:=|I_2|\leq k$. Without loss of generality, assume $I_1=\{2,3,\ldots,m_1+1\}$ and $I_2=\{m_1+2,\ldots,m'\}$. By inductive hypothesis, we have that $\Delta_1$ and $\Delta_2$ contain exactly $m_1+1$ and $m_2+1$ affinely independent points, respectively. Observe that $\cS'=\{S_{1}\}\cup\cS_1\cup\{S\cup S_{1}\}_{S\in\cS_2}$. Since $\cS$ is closed under union, for each $S\in \cS_1$, there exists $i\in I_2$ such that $S\cup S_{1}=S_i$. It follows that for all $\delta\in\Delta'$ with $\delta_1=1$, for each $i_1\in I_1$, there exists $i_2\in I_2$ such that $\delta_{i_1}=\delta_{i_2}$. Therefore, there exists a mapping $F:\Delta_2\rightarrow \Delta_1$ such that\begin{displaymath}
	\Delta'=\{(0,\delta^1,0):\delta^1\in\Delta_1\}\cup\{(1,F(\delta^2),\delta^2):\delta^2\in\Delta_2\}.
	\end{displaymath}
	Then it is easy to verify that $\Delta'$ is a set of $m'+1(=|\Delta_1|+|\Delta_2|)$ affinely independent points.
	\end{proof}

	It is easy to see that given a family $\cS=\{S_i\}_{i\in I}$, the condition $\Delta^{l,u}=\Delta^{0,n}$ holds provided that $l\leq n-|\bigcup_{i\in I}S_i|$ and 
	$$u\geq \max_{\tilde{I}\subseteq I}\{|\tilde{I}|: S_i\setminus S_j\neq\emptyset,S_j\setminus S_i\neq\emptyset\text{ for any }i\neq j\in\tilde{I} \}.$$
	When the sets are nested, as in Example \ref{nestedexample}, these conditions simply reduce to $l\leq n-|S_m|$ and $u\geq 1$.


\subsection{Properties of valid inequalities for $\XJDd$}\label{sec:prop}
Notice that an inequality $\alpha^Tz+\beta^T\delta\leq\gamma$ is valid for $\XJDd$  if and only if 
$$		\gamma~\geq~\max_{(z,\delta)\in \XJDd}\{\alpha^Tz+\beta^T\delta\}~=~\max_{\bar{\delta}\in \DJDd}\Big\{\beta^T\bar{\delta}+\max_{z \in \XJDd(\bar\delta)}\alpha^Tz\Big\}.$$
In other words, it is valid if and only if 
\begin{equation}
\gamma-\beta^T\bar{\delta}\geq \max_{z \in \XJDd(\bar\delta)}\alpha^Tz \label{eq:validity}
\end{equation}
holds for all $\bar{\delta}\in\DJDd$.
We next characterize some properties of facet-defining inequalities  for $\conv(\XJDd)$.

\begin{lem}\label{lem:ineq}
	Assume $\cS$ is a {\goodfamily} family. Let $\alpha^Tz+\beta^T\delta\leq \gamma$ be a facet-defining inequality for $\conv(\XJDd)$, and let $F$ be the associated facet. Then, either $F$ is defined by a facet-defining inequality of the form $(\beta')^T\delta \leq \gamma'$ which also defines a facet of $\conv(\DJDd)$, or  \eqref{eq:validity} holds as equality for all $\bar\delta\in\DJDd$.
\end{lem}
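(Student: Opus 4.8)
The plan is to argue by contradiction: suppose $\alpha^Tz+\beta^T\delta\le\gamma$ is facet-defining for $\conv(\XJDd)$ but \eqref{eq:validity} is \emph{not} tight for every $\bar\delta\in\DJDd$, and then show that $F$ must in fact be cut out by an inequality that only involves $\delta$ and is facet-defining for $\conv(\DJDd)$. For each $\bar\delta\in\DJDd$ let $g(\bar\delta):=\max_{z\in\XJDd(\bar\delta)}\alpha^Tz$, so that validity says $\gamma-\beta^T\bar\delta\ge g(\bar\delta)$ for all $\bar\delta$, and a point $(\bar z,\bar\delta)\in\XJDd$ lies on $F$ exactly when $\bar\delta$ achieves the outer maximum (i.e.\ $\gamma-\beta^T\bar\delta=g(\bar\delta)$) \emph{and} $\bar z$ achieves the inner maximum. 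Partition $\DJDd$ into the ``active'' set $A=\{\bar\delta:\gamma-\beta^T\bar\delta=g(\bar\delta)\}$ and its complement. The hypothesis we are assuming for contradiction is precisely $A\neq\DJDd$. Since $F$ is a facet, it is nonempty, so $A\neq\emptyset$; the goal is to show $\aff(F)$ is contained in a hyperplane of the form $(\beta')^T\delta=\gamma'$.

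The key structural input is properness: $\DJDd$ consists of exactly $m+1$ affinely independent points, so $\conv(\DJDd)$ is an $m$-dimensional simplex in $\bR^m$, and any proper subset of its vertices spans a proper face, i.e.\ lies on some hyperplane $(\beta')^T\delta=\gamma'$ that is valid for $\conv(\DJDd)$. In particular, since $|A|\le m$, the set $A$ lies on such a hyperplane. I would first pin down $\dim F$: because $\XJDd\subseteq\conv(\XJDd)$ has dimension $n+m$ (one should note $\conv(\XJDd)$ is full-dimensional under the running assumptions — or at least work in its affine hull), a facet has dimension $n+m-1$. On the other hand, $F=\bigcup_{\bar\delta\in A}\big(\conv(\XJDd(\bar\delta))^{\ast}\times\{\bar\delta\}\big)$ where $\conv(\XJDd(\bar\delta))^{\ast}$ denotes the face of $\conv(\XJDd(\bar\delta))$ on which $\alpha^Tz$ is maximized. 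The affine hull of $F$ therefore projects, in the $\delta$-coordinates, onto $\aff(A)$, which has dimension $|A|-1\le m-1$ (using affine independence of the points of $\DJDd$, hence of those in $A$). Counting: $\dim\aff(F)\le \dim\aff(A) + \max_{\bar\delta\in A}\dim\conv(\XJDd(\bar\delta))^{\ast}\le (m-1)+n$. So the dimension is already forced down to at most $n+m-1$, which is consistent, and equality forces both $|A|=m$ and $\conv(\XJDd(\bar\delta))^{\ast}$ to be a facet (dimension $n-1$) of $\conv(\XJDd(\bar\delta))$ for the relevant $\bar\delta$'s — but what we really extract is that $\proj_\delta(F)$ is exactly the $(m-1)$-dimensional face $\conv(A)$ of the simplex $\conv(\DJDd)$. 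The supporting hyperplane of that face, $(\beta')^T\delta=\gamma'$, then pulls back to a valid inequality $(\beta')^T\delta\le\gamma'$ for $\conv(\XJDd)$ whose induced face contains $F$; since $F$ is a facet and the new inequality's face is proper (it misses the vertices of $\DJDd$ outside $A$, which do occur in $\XJDd$), the two faces coincide, so $F$ is defined by $(\beta')^T\delta\le\gamma'$, and this inequality defines a facet of $\conv(\DJDd)$ because $\conv(A)$ is a facet of the simplex $\conv(\DJDd)$. This contradicts nothing by itself — rather, it establishes the first alternative in the statement, completing the dichotomy.

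The main obstacle I anticipate is the dimension bookkeeping in the step ``$\proj_\delta(F)=\conv(A)$,'' and in particular ruling out the degenerate possibility that $F$ lives over $\aff(A)$ but is ``too thin'' in the $z$-directions — one has to use that $F$ is a \emph{facet} (dimension $n+m-1$) to force $|A|=m$ and to force full-dimensional fibers over at least one $\bar\delta\in A$, and then use Lemma~\ref{lem:disj} (the explicit TU description of $\conv(\XJDd(\bar\delta))$) together with the connectedness of the simplex structure to propagate this across all of $A$. A secondary technical point is checking that the hyperplane $(\beta')^T\delta=\gamma'$ supporting the face $\conv(A)$ of $\conv(\DJDd)$ can be chosen so that $(\beta')^T\delta\le\gamma'$ is valid for all of $\conv(\DJDd)$ (true since $\conv(A)$ is a genuine face of the simplex) and hence for $\conv(\XJDd)$ via $\DJDd=\proj_\delta(\XJDd)$; and that its face in $\conv(\XJDd)$ is proper, which holds because there exists $\bar\delta\in\DJDd\setminus A$ with a corresponding point $(\bar z,\bar\delta)\in\XJDd$ violating $(\beta')^T\bar\delta=\gamma'$. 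I would also remark that the two alternatives are not mutually exclusive, but the lemma only claims a disjunction, so no further case analysis is needed.
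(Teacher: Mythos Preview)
Your overall strategy is sound and reaches the right conclusion, but it is considerably more elaborate than the paper's argument, and the extra work introduces an unnecessary dependence on full-dimensionality.

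The paper's proof never tries to determine $|A|$. It simply fixes \emph{one} $\bar\delta\in\DJDd$ for which \eqref{eq:validity} is strict and takes the (unique) facet of the simplex $\conv(\DJDd)$ opposite to that vertex. Because every facet of an $m$-simplex is spanned by the remaining $m$ vertices, this facet automatically contains all of $\DJDd\setminus\{\bar\delta\}\supseteq A$; hence every integral point of $F$ satisfies $(\beta')^T\delta=\gamma'$, the inequality $(\beta')^T\delta\le\gamma'$ is valid for $\conv(\XJDd)$, and its induced face is proper (it misses any point with $\delta$-part $\bar\delta$). Since $F$ is a facet, the two faces coincide. No dimension count is needed.

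Your route instead tries to force $|A|=m$ by bounding $\dim F\le(|A|-1)+n$ and comparing with $\dim F=n+m-1$. Two comments. First, the bound you wrote, $\dim\aff(F)\le\dim\aff(A)+\max_{\bar\delta\in A}\dim\conv(\XJDd(\bar\delta))^{\ast}$, is not the right inequality in general; what is actually true (and sufficient) is the cruder $\dim F\le(|A|-1)+n$, obtained from $F\subseteq\bR^n\times\aff(A)$. Second, and more importantly, the comparison $\dim F=n+m-1$ presupposes that $\conv(\XJDd)$ is full-dimensional, which has not been established at this point in the paper (Lemma~\ref{dimlem} appears later and only covers the nested case). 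Without full-dimensionality you cannot conclude $|A|=m$. The fix is precisely the paper's shortcut: you do not need $|A|=m$ at all, because as soon as $A\subsetneq\DJDd$ you can choose any vertex $\bar\delta\notin A$ and take the opposite facet of the simplex, which already contains $A$.

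A minor imprecision: $F$ is not the \emph{union} $\bigcup_{\bar\delta\in A}\big(\conv(\XJDd(\bar\delta))^{\ast}\times\{\bar\delta\}\big)$ but its convex hull; this does not affect the projection argument, but is worth stating correctly.
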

\begin{proof}
	Let $\alpha^Tz+\beta^T\delta\leq \gamma$ be a facet-defining inequality and assume that inequality \eqref{eq:validity} is strict for some $\bar\delta \in \DJDd$.
	In this case,  the facet $F$ does not contain any integral points of the form $( z, \bar\delta)$ and consequently, for all  integral points $( z, \delta)\in F$ we have $\delta\in\DJDd\setminus\{\bar\delta\}$.
	
	As $\cS$ is a {\goodfamily} family, $\conv(\DJDd)$ is a full-dimensional simplex in $\R^m$ with $m+1$ facets.
	Let $(\beta')^T\delta \leq \gamma'$ be the (unique) facet-defining inequality for $\conv(\DJDd)$ such that $\bar \delta$ is not contained in the corresponding facet $F'$. 
	Note that all points in $\DJDd \setminus \{\bar\delta\}$ satisfy $(\beta')^T\delta = \gamma'$.
	As all integral points in $F$ have their $\delta$ components in $\DJDd\setminus\{\bar\delta\}$, we conclude that all integral points in $F$ satisfy  $(\beta')^T\delta = \gamma'$.
	Therefore, $F$ is defined by the inequality $(\beta')^T\delta \leq \gamma'$.
%
\end{proof}

Given a proper family $\cS$ with $\DJDd=\{\mydelta^1,\ldots,\mydelta^{m+1} \}$ and a vector $\alpha\in\bR^{n}$, let $A_\cS\in\bR^{(m+1)\times(m+1)}$  be the matrix with rows\begin{displaymath}
[A_\cS]_i=[1,-(\mydelta^i)^T],\quad i=1,\ldots,m+1,
\end{displaymath}
and $\nu_\alpha\in\bR^{(m+1)}$  be the vector with entries\begin{equation}
[\nu_\alpha]_i=\max_{z\in\XJDd(\mydelta^i) }\alpha^Tz,\quad i=1,\ldots,m+1.\label{eq:nus}
\end{equation}
When \eqref{eq:validity} holds as equality for all $\bar{\delta}\in \DJDd$,
we can now write  \eqref{eq:validity}  in matrix form as $A_\cS\binom{\gamma}{\beta}=\nu_{\alpha}.$
Note that as $\cS$ is proper, the vectors in $\DJDd$ are affinely independent and therefore  $A_\cS$ is nonsingular.
Then, for any given $\alpha\in\bR^{n}$, we can construct a valid inequality $\alpha^Tz+\beta_\alpha^T\delta\leq\gamma_\alpha$ for $\XJDd$ where
\begin{equation}\label{inq:qwer}	
\left(\begin{array}{c}\gamma_\alpha\\\beta_\alpha\end{array}\right):=(A_\cS)^{-1}\nu_{\alpha}.
\end{equation}
Moreover, by Lemma \ref{lem:ineq}, if $\alpha^Tz+\beta^T\delta\leq \gamma$ defines a facet $F$ of  $\conv(\XJDd)$, then either $\beta = \beta_\alpha$ and $\gamma = \gamma_\alpha$, or, $F$ is defined by an inequality of the form $(\beta')^T\delta \leq \gamma'$.

We will need the following definition in the next lemma.
\begin{defn}
For any two vectors $\alpha,\alpha'\in\bR^n$, we say {\em$\alpha'$ follows the \alphapattern} if\begin{enumerate}
	\item For each $j\in J$, $(i)$ if $\alpha_j\ge 0$, then $\alpha'_j\geq 0$, and   $(ii)$ if $\alpha_j\le 0$, then $\alpha'_j\leq 0$;
	\item For each pair $j_1,j_2\in J$, if $\alpha_{j_1}\geq \alpha_{j_2}$, then $\alpha'_{j_1}\geq \alpha'_{j_2}$.
\end{enumerate}
\end{defn}

By definition, it can be shown that if $\alpha'$ follows the pattern of $\alpha$, then there exists an optimal solution of \eqref{eq:nus} such that it remains optimal if we replace $\alpha$ by $\alpha'$ in \eqref{eq:nus}.
The next lemma has a similar flavor of this observation, and will be used to show that we can put a restriction on $\alpha$ when we consider any facet-defining inequality with coefficients defined by \eqref{inq:qwer}. 
\begin{lem}\label{lem:pattern}
	Assume $\cS$ is a proper family and $(\alpha,\beta,\gamma)$ satisfies \eqref{eq:validity} as equality for all $\bar{\delta}\in\DJDd$. 
	If two vectors $\alpha^+,\alpha^-\in\R^n$  both follow the \alphapattern~and $\alpha=\lambda \alpha^++\mu\alpha^-$ for some $\lambda\geq 0$ and $\mu\geq 0$, then\begin{displaymath}
	\nu_\alpha=\lambda \nu_{\alpha^+}+\mu\nu_{\alpha^-}
	\end{displaymath}
	where $\nu_\alpha$ is defined as in \eqref{eq:nus}.
\end{lem}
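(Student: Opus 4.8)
The plan is to prove the vector identity componentwise. Fix $i\in\{1,\dots,m+1\}$ and write $g_i(c):=\max_{z\in\XJDd(\mydelta^i)}c^Tz$ for $c\in\bR^n$, so that $[\nu_\alpha]_i=g_i(\alpha)$; it then suffices to show $g_i(\alpha)=\lambda g_i(\alpha^+)+\mu g_i(\alpha^-)$ for each $i$. By Lemma~\ref{lem:disj} the polytope $\conv(\XJDd(\mydelta^i))$ is integral, so $g_i$ is exactly its support function and is therefore sublinear. Positive homogeneity and subadditivity give $g_i(\alpha)=g_i(\lambda\alpha^++\mu\alpha^-)\le \lambda g_i(\alpha^+)+\mu g_i(\alpha^-)$, which disposes of one inequality; only the reverse remains.

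For the reverse inequality I would reduce everything to the existence of a single point $z^*\in\XJDd(\mydelta^i)$ that maximizes both $\alpha^+$ and $\alpha^-$. Indeed, given such a common maximizer,
\[
\lambda g_i(\alpha^+)+\mu g_i(\alpha^-)=\lambda(\alpha^+)^Tz^*+\mu(\alpha^-)^Tz^*=(\lambda\alpha^++\mu\alpha^-)^Tz^*=\alpha^Tz^*\le g_i(\alpha),
\]
where the last step uses only that $z^*$ is feasible for $\alpha$. Combined with the sublinearity bound, this forces equality at index $i$, and hence proves the lemma once $z^*$ is produced.

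To produce the common maximizer I would refine the observation stated just before the lemma. That observation, applied to a single follower, yields an $\alpha$-optimal solution that stays optimal for that follower; its proof rests on the explicit description of $\XJDd(\mydelta^i)$ in Lemma~\ref{lem:disj} (fix $z_j=0$ on $J_0$, the disjoint covering constraints $\sum_{j\in J_k}z_j\ge 1$, and the window $l\le\sum_{j\in J} z_j\le u$) together with exchange steps that depend only on the signs of the $\alpha_j$ and on their relative order. I would strengthen it to produce one distinguished solution $z^*$, determined by the \alphapattern~alone, that is optimal for \emph{every} vector following the pattern of $\alpha$. Since $\alpha^+$ and $\alpha^-$ each share the signs of $\alpha$ and have an order that merely coarsens that of $\alpha$ (strict comparisons are preserved and only new ties are introduced), this single $z^*$ is then optimal for $\alpha^+$ and for $\alpha^-$ simultaneously, i.e., the desired common maximizer.

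The main obstacle is making this uniform construction rigorous, because the covering constraints and the two-sided cardinality constraint are coupled: removing a $1$ can be blocked by some covering constraint $\sum_{j\in J_k}z_j\ge 1$ or by the lower bound $l$, while adding a $1$ can be blocked by the upper bound $u$. I would organize the exchange argument by separating forced coordinates (the last surviving $1$ of a group, or coordinates included only to reach $l$) from free ones, and show that any strictly improving exchange for a follower---necessarily a strict gain on some pair $a,b$ with $\alpha'_a>\alpha'_b$---is also strictly improving for $\alpha$, since the pattern condition forces $\alpha_a>\alpha_b$; this contradicts optimality of $z^*$ for $\alpha$. Keeping the choice of $z^*$ independent of which follower ($\alpha^+$ or $\alpha^-$) is being tested, while simultaneously respecting all three families of constraints, is the delicate point of the proof.
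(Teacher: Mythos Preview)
Your proposal is correct and takes essentially the same approach as the paper: both hinge on producing, for each $i$, a single feasible point that simultaneously maximizes $\alpha$, $\alpha^+$, and $\alpha^-$ over $\XJDd(\mydelta^i)$, after which the identity $\nu_\alpha=\lambda\nu_{\alpha^+}+\mu\nu_{\alpha^-}$ follows by plain linearity. The paper is slightly more direct---it simply takes any $\bar z^i\in\arg\max_{z\in\XJDd(\mydelta^i)}\alpha^Tz$ and, using the explicit greedy description of this maximum from Lemma~\ref{lem:disj}, observes that such a $\bar z^i$ remains optimal for every vector following the pattern of $\alpha$; this makes your sublinearity detour unnecessary and replaces your exchange argument with a one-line structural observation.
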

\begin{proof}
	As in the proof of Lemma \ref{lem:disj}, for a given  $\bar{\delta}\in\DJDd$ let $J_0:=\bigcup_{i\in I: \bar\delta_i=1}S_i$, $I_0:=\{i\in I:\bar{\delta}_i=0 \}$, let $ I^*\subseteq I_0$ denote the index set of minimal elements (with respect to inclusion) of $\{S_i: i\in I_0\}$, and let $J_i:=S_i\setminus J_0$ for $i\in I^*$. 
	Note that the optimal value of $\max_{z\in\XJDd(\bar{\delta})}\alpha^Tz$ is equal to $\sum_{i \in I^*}\max_{j\in J_i}\alpha_j$ (the sum of one largest $\alpha_j$ in $\{\alpha_j\}_{j\in J_i}$ for $i\in I^*$) plus the largest sum of at least $(\max\{l-|I^*|,0\})$ and up to $(u-|I^*|)$ largest remaining $\alpha_j$ values for $j\in J\setminus J_0$.

	For  $=1,\ldots,m+1$, let \begin{equation}
	\bar{z}^i\in\arg\max_{z\in \XJDd(\mydelta^i)}\alpha^Tz.\label{zbardef}
	\end{equation}
	As $\alpha^+$ and $\alpha^-$ both follow the \alphapattern, $\bar{z}^i$ remains optimal for \eqref{zbardef} after replacing $\alpha$ by $\alpha^+$ or $\alpha^-$, i.e.,\begin{displaymath}
	\bar{z}^i\in (\arg\max_{z\in \XJDd(\mydelta^i)}(\alpha^+)^Tz)\cap(\arg\max_{z\in \XJDd(\mydelta^i)}(\alpha^-)^Tz),
	\end{displaymath}
	 for $i=1,\ldots,m+1$. We next construct a matrix $\bar{Z}$ with   columns $\bar{z}^i$ and observe that 
	 \begin{displaymath}
		\bar{Z}^T\alpha=\nu_\alpha,\quad\bar{Z}^T\alpha^+=\nu_{\alpha^+},\text{~and,~} \bar{Z}^T\alpha^-=\nu_{\alpha^-}.
	\end{displaymath}
	Therefore, we have\begin{displaymath}
	\nu_\alpha=\bar{Z}^T\alpha=\lambda\bar{Z}^T\alpha^++\mu\bar{Z}^T\alpha^-=\lambda\nu_{\alpha^+}+\mu\nu_{\alpha^-}.
	\end{displaymath}
\end{proof}

Using this technical result, we next make an observation on the coefficients of facet-defining inequalities.

\begin{lem}\label{basicthm}
	Assume $\cS$ is a {\goodfamily} family. Then each facet $F$ of $\conv(\XJDd)$ is defined by an inequality $\bar\alpha^Tz+\bar\beta^T\delta\leq\bar\gamma$ where $\bar\alpha\in\ \{ 0, \kappa\}^{|J|} $ for some $\kappa\in\R$.
\end{lem}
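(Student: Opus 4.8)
The plan is to combine Lemmas~\ref{lem:ineq}, \ref{lem:pattern}, and the representation \eqref{inq:qwer} to show that the coefficient vector $\bar\alpha$ of any facet-defining inequality can be taken to have at most two distinct values, one of which is $0$. By Lemma~\ref{lem:ineq}, any facet $F$ of $\conv(\XJDd)$ is either defined by an inequality of the form $(\beta')^T\delta\le\gamma'$ — in which case $\bar\alpha=\zero$ and we are done trivially (with any $\kappa$) — or $F$ is defined by $\alpha^Tz+\beta_\alpha^T\delta\le\gamma_\alpha$ with $(\gamma_\alpha,\beta_\alpha)=(A_\cS)^{-1}\nu_\alpha$ for some $\alpha\in\bR^n$, where \eqref{eq:validity} holds as equality for all $\bar\delta\in\DJDd$. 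So the real work is entirely with the second case: I must show that among all $\alpha$ that yield this same facet $F$, at least one lies in $\{0,\kappa\}^{|J|}$.

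First I would fix such an $\alpha$ and consider the set $C_\alpha$ of all $\alpha'\in\bR^n$ that follow the $\alphapattern$. This is a polyhedral cone (cut out by the sign constraints $\alpha'_j\ge 0$ or $\alpha'_j\le 0$ according to the sign of $\alpha_j$, and by the ordering constraints $\alpha'_{j_1}\ge\alpha'_{j_2}$ whenever $\alpha_{j_1}\ge\alpha_{j_2}$), and $\alpha\in C_\alpha$. Its extreme rays are easy to describe: grouping the indices $j$ by the value of $\alpha_j$ into ordered blocks and noting the sign pattern, each extreme ray is (up to scaling) the $0/1$ indicator of an ``upper set'' of blocks (those with $\alpha$-value $\ge$ some threshold) — more precisely a vector of the form $\kappa\,\mathbf{1}_{T}$ for $T$ a union of the top blocks, or $-\kappa\,\mathbf{1}_{T}$ for $T$ a union of the bottom blocks; in any case each extreme ray $r$ of $C_\alpha$ has all its nonzero entries equal to a common value, i.e.\ $r\in\{0,\kappa_r\}^{|J|}$ for some scalar $\kappa_r$. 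Writing $\alpha=\sum_k \lambda_k r^k$ as a nonnegative combination of these extreme rays, Lemma~\ref{lem:pattern} (applied inductively, two rays at a time) gives $\nu_\alpha=\sum_k\lambda_k\,\nu_{r^k}$, and hence by linearity of $(A_\cS)^{-1}$,
\[
\binom{\gamma_\alpha}{\beta_\alpha}=\sum_k\lambda_k\binom{\gamma_{r^k}}{\beta_{r^k}}.
\]
Thus the facet-defining inequality $\alpha^Tz+\beta_\alpha^T\delta\le\gamma_\alpha$ is a nonnegative combination of the valid inequalities $(r^k)^Tz+\beta_{r^k}^T\delta\le\gamma_{r^k}$. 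Since $F$ is a facet, a standard argument shows that each of these inequalities that appears with positive weight $\lambda_k$ must itself be valid and tight on all of $F$, hence must be a positive scalar multiple of the facet inequality (as $F$ has dimension $\dim\conv(\XJDd)-1$); picking any one such $r^k$ gives a coefficient vector $\bar\alpha=r^k\in\{0,\kappa\}^{|J|}$ defining the same facet $F$.

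The step I expect to be the main obstacle is making the extreme-ray decomposition of $C_\alpha$ clean and verifying that Lemma~\ref{lem:pattern} really does apply along the way — in particular that the partial sums $\sum_{k\le K}\lambda_k r^k$ one encounters in the inductive application of the lemma continue to follow the $\alphapattern$ (so that the hypotheses of Lemma~\ref{lem:pattern} remain satisfied at each step), and that the extreme rays are exactly the block-indicator vectors claimed, including the correct sign structure coming from condition~1 of the definition. A secondary point to handle carefully is the ``facet forces each summand to be a multiple'' argument: one must rule out the degenerate possibility that $F$ is cut out only by a $\delta$-only inequality even in this branch, but that case was already split off via Lemma~\ref{lem:ineq}, so here each $r^k$ with $\lambda_k>0$ that is tight on $F$ and has a nonzero $z$-part must be a positive multiple of $(\bar\alpha,\bar\beta,\bar\gamma)$, which is what we want.
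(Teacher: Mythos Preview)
Your approach is correct and takes a genuinely different route from the paper.  The paper argues by \emph{local perturbation}: assuming $\bar\alpha$ has at least two distinct nonzero values, it perturbs the entries equal to the smallest nonzero value $\alpha_{\min}$ by $\pm\epsilon$ to produce $\alpha^+,\alpha^-$ that both follow the pattern of $\bar\alpha$; Lemma~\ref{lem:pattern} then exhibits the facet inequality as the average of two distinct valid inequalities, a contradiction in the full-dimensional case (and in the non-full-dimensional case one pushes $\epsilon$ until a coincidence occurs, reducing the number of distinct nonzero values).  Your argument instead performs a \emph{global decomposition}: write $\alpha$ as a conic combination of the extreme rays of the pattern cone $C_\alpha$, observe each extreme ray lies in $\{0,\kappa\}^{|J|}$, and conclude that the facet inequality is a conic combination of valid inequalities of the desired shape, hence one of them already defines $F$.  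Your route is more structural and handles the full- and non-full-dimensional cases uniformly, at the cost of having to describe the extreme rays of $C_\alpha$ explicitly; the paper's route is shorter and uses Lemma~\ref{lem:pattern} verbatim, but needs the extra $\epsilon$-pushing step to close the non-full-dimensional case.

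One point to tighten: your ``inductive, two rays at a time'' application of Lemma~\ref{lem:pattern} does \emph{not} go through as stated, for exactly the reason you flagged.  The issue is not whether the partial sums follow the pattern of $\alpha$ (they do, since $C_\alpha$ is a cone), but that the recursive step requires the remaining $r^k$'s to follow the pattern of the \emph{partial sum}, which can have a strictly finer pattern than $\alpha$ (e.g.\ $\alpha=(2,1)=(1,0)+(1,1)$, but $(1,0)$ does not follow the pattern of $(1,1)$).  The fix is immediate from the \emph{proof} of Lemma~\ref{lem:pattern}: there the authors construct, for each $\mydelta^i$, a single $\bar z^i$ that is optimal for $\alpha$ and simultaneously for \emph{every} $\alpha'\in C_\alpha$.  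Hence $\nu_{\alpha'}=\bar Z^T\alpha'$ for all $\alpha'\in C_\alpha$, so $\alpha'\mapsto\nu_{\alpha'}$ is linear on $C_\alpha$ and $\nu_\alpha=\sum_k\lambda_k\nu_{r^k}$ follows in one step without induction.  With this adjustment your argument is complete.
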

\begin{proof}
	Assume that the claim does not hold. Then there is a facet $F$ such that any inequality $\alpha^Tz+\beta^T\delta\leq\gamma$ defining $F$ has the property that $\alpha$ has at least two distinct nonzero components.
	If $\conv(\XJDd)$ is full-dimensional, then there is a unique inequality (up to positive scaling) $\bar\alpha^T z + \bar\beta^T\delta \leq \bar\gamma$ defining $F$.
	If $\conv(\XJDd)$ is not full-dimensional, we chose $\bar\alpha^T z + \bar\beta^T\delta \leq \bar\gamma$ be an inequality defining $F$ such that $\bar\alpha$ has the smallest number ($\geq 2$) of distinct nonzero components.
%
	
	Let $\alpha_{\min}$  denote the smallest nonzero component of $\bar\alpha$ and let $J_{\min} = \{j\in J\::\:\bar\alpha_j=\alpha_{\min}\} $. 
	Let $\alpha^+$ and $\alpha^-$ be obtained from $\bar\alpha$ as follows
	\begin{equation}
	\alpha^+_j=\left\{\begin{array}{ll}	\bar\alpha_j+\epsilon, 	&\text{if }j\in J_{\min},\\	\bar\alpha_j, 	&\text{otherwise,}	\end{array}	\right.
	~~~~~~~~~~
	\alpha^-_j=\left\{\begin{array}{ll}		\bar\alpha_j-\epsilon, 	&\text{if }j\in J_{\min},\\	\bar\alpha_j,	&\text{otherwise,}	\end{array}	\right.
	\label{eq:alphas}
	\end{equation}
	where $\epsilon>0$ is sufficiently small so that $\alpha^+$ and $\alpha^-$ follow the pattern of $\bar\alpha$.
	Then by Lemma \ref{lem:pattern}, we have $\nu_{\bar{\alpha}}=\frac{1}{2}\nu_{\alpha^+}+\frac{1}{2}\nu_{\alpha^-}$ and using \eqref{inq:qwer}, we can define two valid inequalities\begin{equation}
	(\alpha^+)^Tz+(\beta_{\alpha^+})^T\delta\leq\gamma_{\alpha^+},\quad (\alpha^-)^Tz+(\beta_{\alpha^-})^T\delta\leq\gamma_{\alpha^-}.
	\end{equation}
	In this way, $(\bar\beta,\bar\gamma) =\frac12(\beta_{\alpha^+},\gamma_{\alpha^+})+\frac12(\beta_{\alpha^-},\gamma_{\alpha^-})$. Consequently, $\bar\alpha^Tz+\bar\beta^T\delta\leq\bar\gamma$ can be expressed as a strict convex combination of two valid inequalities. 
	Moreover, these two inequalities are distinct (not a multiple of the original inequality) as $|\{\bar\alpha_j:\bar\alpha_j\neq 0,j\in J \}|\geq 2$.
	When $\conv(\XJDd)$ is full dimensional, this leads to a contradiction.
	
	On the other hand, if $\conv(\XJDd)$ is contained in an affine subspace, then it is possible that both inequalities define the same facet as the original one.
	In this case, we can increase $\epsilon$ in \eqref{eq:alphas} as much as possible while  $\alpha^+$ and $\alpha^-$ follow the \alphapattern. 
	The largest such $\epsilon$ would give an $\alpha^+$ or $\alpha^-$ with one fewer distinct nonzero entries than $\alpha$.
	This again  leads to a contradiction as $\alpha$ was assumed to have the smallest number of distinct nonzero components. 
\end{proof}

We conclude this section by showing that the convex hull of $\XJDd$ can simply be obtained from convex hulls of $\XJD$ and $\XJd$ provided that $\cS$ satisfies some simple conditions.
\begin{thm}\label{thm:decomp}
	Assume $\cS$ is a proper family and $\DJDd=\Delta^{0,n}$. Then\begin{displaymath}
	\conv(\XJDd)=\conv(\XJD)\cap\conv(\XJd).
	\end{displaymath}
\end{thm}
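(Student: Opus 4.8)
The plan is to prove the two inclusions separately. The inclusion $\conv(\XJDd)\subseteq\conv(\XJD)\cap\conv(\XJd)$ is immediate: tightening the cardinality bounds only deletes integer points, so $\XJDd\subseteq\XJD$ and $\XJDd\subseteq\XJd$, and taking convex hulls preserves both containments. All the work is in the reverse inclusion $\conv(\XJD)\cap\conv(\XJd)\subseteq\conv(\XJDd)$, and for this I would show that every valid inequality for $\conv(\XJDd)$ is satisfied on $P:=\conv(\XJD)\cap\conv(\XJd)$. Since $\DJDd=\Delta^{0,n}$ forces $\DJD=\DJd=\Delta^{0,n}$ as well, all three sets share the same $\delta$-projection $\conv(\DJDd)$. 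After checking that $P\subseteq\aff(\conv(\XJDd))$ (deferred to the last paragraph), it then suffices to verify the facet-defining inequalities of $\conv(\XJDd)$ on $P$.

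I would next invoke the structure theory already developed. By Lemma \ref{basicthm}, every facet of $\conv(\XJDd)$ is defined by some $\bar\alpha^Tz+\bar\beta^T\delta\leq\bar\gamma$ with $\bar\alpha\in\{0,\kappa\}^{|J|}$ for a scalar $\kappa$, and by Lemma \ref{lem:ineq} such a facet is either a lifted facet of $\conv(\DJDd)$ (the case $\bar\alpha=\zero$) or else satisfies \eqref{eq:validity} with equality for every $\bar\delta\in\DJDd$, i.e. $\bar\gamma-\bar\beta^T\bar\delta=\max_{z\in\XJDd(\bar\delta)}\bar\alpha^Tz$. This splits the facets into three families: the pure-$\delta$ facets ($\kappa=0$), those with $\kappa>0$, and those with $\kappa<0$. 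A pure-$\delta$ inequality is valid for both $\conv(\XJD)$ and $\conv(\XJd)$ because all three sets have the same $\delta$-projection, hence it holds on $P$.

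The heart of the argument is a fiberwise comparison of optimal values: for any $\bar\delta\in\DJDd$ and $\bar\alpha\in\{0,\kappa\}^{|J|}$, I claim $\max_{z\in\XJD(\bar\delta)}\bar\alpha^Tz=\max_{z\in\XJDd(\bar\delta)}\bar\alpha^Tz$ when $\kappa\geq0$, and symmetrically $\max_{z\in\XJd(\bar\delta)}\bar\alpha^Tz=\max_{z\in\XJDd(\bar\delta)}\bar\alpha^Tz$ when $\kappa\leq0$ (with $\XJD(\bar\delta)$ and $\XJd(\bar\delta)$ the fibers of the respective sets). Using the fiber description from Lemma \ref{lem:disj} — the coordinates in $J_0$ are zero, the disjoint sets $J_i$ each carry at least one $1$, and the total is bounded — this is an elementary padding/removal argument: because $\DJDd=\Delta^{0,n}$ guarantees $\XJDd(\bar\delta)\neq\emptyset$, one has $\max(|I^*|,l)\leq u\leq n-|J_0|$, so an optimal $z$ for the one-sided fiber can be pushed into the two-sided fiber by switching on (when $\kappa\geq0$) or off (when $\kappa\leq0$) coordinates of coefficient $0$ without changing the objective. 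Granting this, a facet with $\kappa\geq0$ satisfies, for every $(z,\delta)\in\XJD$ with $\delta=\bar\delta$, that $\bar\alpha^Tz\leq\max_{z'\in\XJD(\bar\delta)}\bar\alpha^Tz'=\bar\gamma-\bar\beta^T\bar\delta$; writing a point of $\conv(\XJD)$ through its unique barycentric representation $\delta=\sum_k\lambda_k\mydelta^k$ with $z=\sum_k\lambda_k a^k$ and $a^k\in\conv(\XJD(\mydelta^k))$ then gives $\bar\alpha^Tz+\bar\beta^T\delta=\sum_k\lambda_k(\bar\alpha^Ta^k+\bar\beta^T\mydelta^k)\leq\bar\gamma$, so the inequality holds on all of $\conv(\XJD)$ and hence on $P$. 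The case $\kappa\leq0$ is identical with $\XJd$ in place of $\XJD$.

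The step I expect to be the main obstacle is the non-full-dimensional case, since facet inequalities describe $\conv(\XJDd)$ only modulo its affine hull and I must separately certify $P\subseteq\aff(\conv(\XJDd))$. Because $P\subseteq\aff(\XJD)\cap\aff(\XJd)$, it suffices to prove $\aff(\XJD)=\aff(\XJd)=\aff(\XJDd)$ under the standing hypotheses; I would do this by using $u\geq2$ and $n-|S_i|\geq l$ to exhibit, inside each nonempty two-sided fiber, enough affinely independent points to span the one-sided affine hull, and this bookkeeping is the heaviest part. A route that sidesteps the affine-hull discussion is a direct disjunctive argument: represent a point of $P$ via Balas' extended formulation, observe that affine independence of $\DJDd$ forces the $\conv(\XJD)$- and $\conv(\XJd)$-witnesses to share the same multipliers $\lambda$, and redistribute cardinality among the fibers while preserving $z=\sum_k\lambda_k a^k$ so as to land in the two-sided fibers; there the feasibility of the redistribution (a transportation/exchange argument) becomes the crux instead.
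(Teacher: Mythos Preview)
Your facet argument is correct and matches the paper's: both invoke Lemmas~\ref{lem:ineq} and~\ref{basicthm} to reduce to $\bar\alpha\in\{0,\kappa\}^{|J|}$, then observe that when $\kappa\geq0$ the fiber maxima $\max_{z\in X^{\cdot}(\bar\delta)}\bar\alpha^Tz$ do not depend on the lower bound $l$ (and symmetrically for $\kappa\leq0$), so the tight inequality for $\XJDd$ is already valid for the appropriate one-sided hull. Your barycentric phrasing and the paper's remark that $\nu_{\bar\alpha}$ is unchanged when $l$ is dropped are the same observation.

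The real divergence is the affine-hull step, and here the paper's route is both cleaner and actually complete, whereas yours is left open. The paper does \emph{not} attempt to show $\aff(\XJD)=\aff(\XJd)=\aff(\XJDd)$. Instead, for an arbitrary equation $\alpha^Tz+\beta^T\delta=\gamma$ satisfied by $\conv(\XJDd)$, it splits $\alpha=\alpha^++\alpha^-$ into its nonnegative and nonpositive parts, uses Lemma~\ref{lem:pattern} to get $\nu_\alpha=\nu_{\alpha^+}+\nu_{\alpha^-}$ (hence $(\beta,\gamma)=(\beta_{\alpha^+},\gamma_{\alpha^+})+(\beta_{\alpha^-},\gamma_{\alpha^-})$ via \eqref{inq:qwer}), and then observes that the $\alpha^+$-inequality is valid for $\conv(\XJD)$ and the $\alpha^-$-inequality for $\conv(\XJd)$ by the same sign argument you used for facets. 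Summing gives $\alpha^Tz+\beta^T\delta\leq\gamma$ on $P$; repeating with $-\alpha$ gives equality. This immediately yields $P\subseteq\aff(\conv(\XJDd))$ with no dimension counting and no transportation step. By contrast, your first route asserts the three affine hulls coincide, which is stronger than needed and not obviously derivable from $u\geq2$ and $|S_i|\leq n-l$ for a general proper $\cS$; your second route leaves the cardinality-redistribution feasibility as an unresolved crux. The sign-splitting trick is precisely the missing idea that closes this gap without extra work.
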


\begin{proof}
 	As $\XJDd=\XJD\cap\XJd$, we have  $\conv(\XJDd)\subseteq \conv(\XJD)\cap\conv(\XJd)$. 
	We next show that the reverse inclusion also holds. 
	We first consider the case when $\conv(\XJDd)$ is not full-dimensional and argue that the affine hull of $ \conv(\XJDd)$ is the same as that of $\conv(\XJD)\cap\conv(\XJd)$.
	Let  $\alpha^Tz+\beta^T\delta=\gamma$ be an equation satisfied by all points in $\conv(\XJDd)$. 
	Consider now only one direction of the equation $\alpha^Tz+\beta^T\delta\leq\gamma$. Using the notation defined in the proof of Lemma \ref{lem:ineq}, we have $(\beta,\gamma)=(A_\cS)^{-1}\nu_\alpha$ as $\alpha^Tz+\beta^T\mydelta^i=\gamma$ for any $z\in\XJDd(\mydelta^i)$. Let $\alpha^+$ and $\alpha^-$ denote the nonnegative part and the nonpositive part of $\alpha$, respectively, i.e.,\begin{displaymath}
	\alpha^+_j=\max\{\alpha_j,0 \},\ \alpha^-_j=\min\{\alpha_j,0 \},\quad j=1,\ldots,n.
	\end{displaymath}
	Letting $\gamma^+=\gamma_{\alpha^+}$,  $\beta^+=\beta_{\alpha^+}$, $\gamma^-=\gamma_{\alpha^-}$,  $\beta^-=\beta_{\alpha^-}$ as defined in \eqref{inq:qwer}, we see that the following inequalities	are valid for $\XJDd$: 
	\begin{align*}
	(\alpha^+)^Tz+(\beta^+)^T\delta\leq\gamma^+,\quad(\alpha^-)^Tz+(\beta^-)^T\delta\leq\gamma^-.
	\end{align*}
	Moreover, as $\alpha=\alpha^++\alpha^-$ and both $\alpha^+$ and $\alpha^-$  follow the \alphapattern, by Lemma \ref{lem:pattern} we have $\nu_\alpha=\nu_{\alpha^+}+\nu_{\alpha^-}$, and therefore $\beta^++\beta^-=\beta$ and $\gamma^++\gamma^-=\gamma$.
	Note that when $\bar{\alpha}\geq 0$, $\nu_{\bar{\alpha}}$ does not depend on $l$ as its $i$-th entry is equal to $\sum_{i \in I^*}(\max_{j\in J_i}\bar\alpha_j)$ plus the sum of the $(u-|I^*|)$ largest remaining $\alpha_j$ values for $j\in J\setminus J_0 $, where $I^*,\{J_i\}_{i\in I^*\cup\{0\}}$ are associated with $\mydelta^i$ (as defined in Lemma \ref{lem:disj}). It follows that\begin{displaymath}
	[\nu_{\alpha^+}]_i=\max_{z\in\XJDd(\mydelta^i)}(\alpha^+)^Tz=\max_{z\in\XJD(\mydelta^i)}(\alpha^+)^Tz, ~~i=1,\ldots,m+1.
	\end{displaymath}
	This implies that $(\alpha^+)^Tz+(\beta^+)^T\delta\leq\gamma^+$ is valid for $\conv(\XJD)$. Using a similar argument it is easy to see that $(\alpha^-)^Tz+(\beta^-)^T\delta\leq\gamma^-$ is valid for $\conv(\XJd)$. Note that $(\alpha,\beta,\gamma)=(\alpha^+,\beta^+,\gamma^+)+(\alpha^-,\beta^-,\gamma^-)$. Combining both inequalities, we have $\alpha^Tz+\beta^T\delta\leq\gamma$ is valid for $\conv(\XJD)\cap\conv(\XJd)$.
	
	When we consider the other direction $-\alpha^Tz-\beta^T\delta\leq-\gamma$, by repeating the argument above for $(-\alpha,-\beta,-\gamma)$, we see that $-\alpha^Tz-\beta^T\delta\leq-\gamma$ is valid for $\conv(\XJD)\cap\conv(\XJd)$.
	This implies that $\alpha^Tz+\beta^T\delta=\gamma$ is valid for $\conv(\XJD)\cap\conv(\XJd)$.
	
 	We now consider an arbitrary facet $F$ of $\conv(\XJDd)$. Let $\DJDd=\{\mydelta^i \}_{i=1}^{m+1}$. As $\DJDd\subseteq\DJD,\DJd\subseteq\Delta^{0,n}$, the assumption of the theorem implies that 
	\begin{equation}\label{eq:allsame}\DJDd=\Delta^{0,n}=\DJD=\DJd.\end{equation}

	  By Lemmas \ref{lem:ineq} and  \ref{basicthm}, we only need to discuss the following two cases:\begin{enumerate}
	\item $F$ can be defined by an inequality $(\beta')^T\delta\leq\gamma'$ which also defines a facet of $\DJDd$. In this case, by \eqref{eq:allsame},  we have $\DJDd=\Delta^{l,n}=\Delta^{0,u}$, and $(\beta')^T\delta\leq\gamma'$ is also valid for $\conv(\XJD)\cap\conv(\XJd)$.
	\item $F$ can be defined by an inequality $\bar{\alpha}^Tz+\bar{\beta}^T\delta\leq\bar{\gamma}$ where $\bar{\alpha}_j\in\{0,\kappa \}$ for some $\kappa\in\bR$ and \eqref{eq:validity} holds as equality for all $\bar{\delta}\in\DJDd$. 
	If $\kappa\geq 0$, then $\bar{\alpha}^Tz+\bar{\beta}^T\delta\leq\bar{\gamma}$ is valid for $\conv(\XJD)$. 
	On the other hand, if $\kappa\leq 0$, then $\bar{\alpha}^Tz+\bar{\beta}^T\delta\leq\bar{\gamma}$ is valid for $\conv(\XJd)$. 
	In both cases, $\bar{\alpha}^Tz+\bar{\beta}^T\delta\leq\bar{\gamma}$ is also valid for $\conv(\XJD)\cap\conv(\XJd)$.
	\end{enumerate}
	
	We therefore conclude that any inequality valid for $\conv(\XJDd)$ is also valid for $\conv(\XJD)\cap\conv(\XJd)$, and consequently $\conv(\XJD)\cap\conv(\XJd)\subseteq\conv(\XJDd)$.
\end{proof}

\section{Convex hull description when  $\cS$ is a family of nested sets}\label{nestedsec}
In this section, we consider the special case when $\cS=\{S_i\}_{i\in I}$ is a family of nested sets.
In other words, we assume  that $S_1\subset S_2\subset\ldots\subset S_m\subset J=\{1,\ldots, n\}$, and without loss of generality, we use $S_i=\{1,\ldots, k_i\}$ where $2\leq k_1<k_2<\ldots<k_m$. Remember that  $I=\{1,\ldots, m\}$.
To avoid trivial cases (see Remark \ref{remark} below), we further assume that $u \geq 2$ and $l \leq n - |S_m|$ (i.e., $k_m \leq n-l$).
For convenience, we define $S_0=\emptyset$, $S_{m+1}=J$, $\delta_0=1$ and $\delta_{m+1}=0$. 

Without loss of generality, we also  assume that $l<u$. 
Note that if  $l=u$,  then $z_n=u-\sum_{j\in J\setminus\{n\}}z_j$ and  
any problem of the form $\min\{c^Tz+d^T\delta:~(z,\delta)\in\XJDd\}$ is equivalent to\begin{align*}
\min\Big\{\sum_{j\in J\setminus\{n\}}c_jz_j+c_n(u-\sum_{j\in J\setminus\{n\}}z_j)+d^T\delta:~&\delta_i=\prod_{j\in S_i}(1-z_j),i\in I;\\
&u-1\leq\sum_{j\in J\setminus\{n\}}z_j\leq u;z_j\in\{0,1\},j\in J\setminus\{n\}\Big\}
\end{align*}
and we can then work in the projected space without variable $z_n$.

\subsection{Basic properties of $\conv(\XJDd)$ and its continuous relaxation}
Recall from Example~\ref{nestedexample} that $\cS$ is a proper family. 
As $S_i\subset S_{i+1}$, all  $(z,\delta) \in \XJDd$ satisfy $\delta_{i+1}\le\delta_i$ for all $i < m$.
Moreover, if $z_j=0$ for all $j\in S_{i+1}\setminus S_i$, then $\delta_{i+1}=\delta_i$. 
Consequently, the following inequalities are valid for $\conv(\XJDd)$ for all $i=1,\ldots,m-1$:
\begin{align}	\delta_{i+1}-\delta_i\leq& 0, \label{mc4}\\
\delta_i-\delta_{i+1}-\sum_{j\in S_{i+1}\setminus S_i}z_j\leq& 0, \label{mc5}
\end{align}
These inequalities are called 2-link inequalities by Crama and Rodr{\'\i}guez-Heck \cite{crama2017class}.
When $\cS$ is nested, Fischer, Fischer and McCormick \cite{fischer2018matroid} show that  (\ref{mc4})-(\ref{mc5})  along with the standard linearization (\ref{xmc0})-(\ref{xmc7}) define the convex hull of  $X^{0,n}$ (i.e. when  $l = 0$, $u = n$).
Crama and Rodr{\'\i}guez-Heck \cite{crama2017class} show the same result holds when  $|\cS|=2$ without assuming $\cS$ is nested.


After adding  (\ref{mc4})-(\ref{mc5})  to the standard linearization of $\XJDd$, some of the initial inequalities \eqref{xmc0}-\eqref{xmc7}  become redundant.  We next give the subset of the inequalities \eqref{xmc0}-\eqref{xmc7} that give a correct formulation when combined with  (\ref{mc4})-(\ref{mc5}):
\begin{align}
l\leq\sum_{j\in J}z_j\leq& u,\label{mc3}\\
z_j+\delta_i\leq& 1, && j\in S_i,i\in I,\label{mc1}\\
1-\delta_1-\sum_{j\in S_1}z_j\leq& 0,\label{mc2}\\
-\delta_m\leq& 0, &&\label{mc6}\\ 
-z_j\leq& 0, &&j\in J\label{mc7},\\
z_j\leq&1, &&j\in J\setminus S_m.\label{mc8}
\end{align}
Note that unlike inequality \eqref{xmc2}, 
inequality \eqref{mc2} is only written for $S_1$ as \eqref{mc5} and \eqref{mc2} together imply the remaining inequalities in \eqref{xmc2}.
Similarly, \eqref{mc4} and \eqref{mc6} imply that each $\delta_i$ is nonnegative. 

Also note  that given any $z\in\{0,1\}^{n}$ satisfying $l\leq\sum_{j\in J}z_j\leq u$, there exists a unique $\delta$ such that $(z,\delta)\in\XJDd$.
We next define this formally.
\begin{defn}\label{def:pts}
Given $U\subseteq J$ with $l\leq|U|\leq u$, we  define the point  $v^U\in\XJDd$ as follows: 
\begin{displaymath}v^U=(z^U,\delta^U)\text{~~~where~~~}
z^U_j=\left\{\begin{array}{ll}
1, & \text{if }j\in U,\\
0, & \text{otherwise},
\end{array}
\right.
~~~\text{and}~~~~~
\delta^U_i=	\prod_{j\in S_i}(1-z^U_j).
\end{displaymath}
\end{defn}

\begin{lem}\label{dimlem}
	The polytope $\conv(\XJDd)$ is full-dimensional.
\end{lem}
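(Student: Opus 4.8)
The plan is to exhibit $n+m+1$ affinely independent points in $\XJDd$, which suffices since $\conv(\XJDd)$ lives in $\bR^{n+m}$. The natural building blocks are the points $v^U$ from Definition~\ref{def:pts}; I would organize the candidate sets $U$ so that the resulting $z$-parts are easy to make affinely independent, and then argue that the $\delta$-coordinates contribute the remaining $m$ degrees of freedom. Concretely, since $u\geq 2$ and $l\leq n-|S_m|=n-k_m$ (so $l\leq n-k_m\leq n-2$, and there are at least $\max\{l,1\}$ indices available outside $S_m$), I would first pick a base set $U_0\subseteq J\setminus S_m$ with $|U_0|=\max\{l,1\}$ when $l\ge 1$, or $U_0=\emptyset$ when $l=0$; note $l\le|U_0|\le u$ since $u\ge 2\ge 1$. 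The point $v^{U_0}$ has $\delta^{U_0}=\one$ (all monomials $\prod_{j\in S_i}(1-z_j)$ evaluate to $1$ because $U_0$ misses $S_m\supseteq S_i$).

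For the $z$-directions: starting from $v^{U_0}$, I would flip one coordinate at a time. For each $j\in J\setminus U_0$, let $U_0\cup\{j\}$ (this has cardinality $\le |U_0|+1\le u$ provided $|U_0|<u$; if $|U_0|=u$ I instead swap an element of $U_0$ out, using $l<u$ to keep cardinality in range), giving a point whose $z$-part differs from $z^{U_0}$ in exactly coordinate $j$; and for each $j\in U_0$ with $|U_0|>l$, drop $j$. This yields roughly $n$ points whose $z$-parts, together with $z^{U_0}$, span all of $\bR^n$ affinely — the differences are (plus or minus) the standard unit vectors $e_j$. The subtlety is that flipping a coordinate $j\in S_i$ changes $\delta_i$ as well, but affine independence of the full $(z,\delta)$ vectors only needs the $z$-projections to be affinely independent, which they are by construction. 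So these give $n+1$ affinely independent points.

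For the remaining $m$ directions I would use the nested structure directly: for $p=1,\ldots,m$, consider a set $U_p$ obtained from $U_0$ by adjusting cardinality into range and such that $U_p\cap S_i=\emptyset$ exactly for $i\le p$ and $U_p\cap S_i\neq\emptyset$ for $i>p$ — e.g. put one element into $S_{p+1}\setminus S_p$ (nonempty since $k_p<k_{p+1}$) while keeping $U_p\cup U_0$-type support disjoint from $S_p$. Then $\delta^{U_p}=(1,\ldots,1,0,\ldots,0)$ with $p$ leading ones. Taking differences $v^{U_p}-v^{U_{p+1}}$ (with $v^{U_0}$ playing the role of $v^{U_m}$ replaced appropriately, or just comparing to a reference point with $\delta=\zero$, which exists since $l\le n-k_m$ lets us choose $U$ hitting every $S_i$) produces vectors whose $\delta$-parts are the successive differences of the staircase, i.e. unit vectors $e_p$ in $\delta$-space, modulo a controlled amount in the already-spanned $z$-directions. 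A clean way to finish is a block-triangular argument: arrange the $n+m+1$ points so that the matrix of (point minus base point) has a block form with an invertible $n\times n$ block (the $z$-flips) in the top-left, and after clearing that block the $\delta$-part of the remaining $m$ rows is the invertible staircase-difference matrix. Hence the full matrix has rank $n+m$.

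The main obstacle is purely bookkeeping: one must verify that every $U$ used has $l\le|U|\le u$, which requires carefully using the hypotheses $u\ge 2$, $l<u$, and $l\le n-k_m$ to have enough room both to add an element inside some $S_i$ and to keep the total in $[l,u]$ — in the tight regimes $l=0$, or $u=2$, or $|S_m|$ close to $n$, the constructions above have to be done with a little care (e.g. when $u=2$ and $l=0$ one works with sets of size $0,1,2$ only, and the staircase points each use a single element of $S_{p+1}\setminus S_p$). Once the cardinality constraints are checked, affine independence follows from the triangular structure described above, and full-dimensionality is immediate.
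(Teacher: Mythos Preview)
Your overall strategy---exhibit $n+m+1$ affinely independent points built from one-coordinate $z$-flips for the $n$ directions and staircase $\delta$-values for the $m$ directions---is exactly what the paper does. But the concrete choice of base point creates a real gap.

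You take $U_0\subseteq J\setminus S_m$ so that $\delta^{U_0}=\mathbf{1}$, and then your ``$z$-flip'' points are $v^{U_0\cup\{j\}}$ for $j\in J\setminus U_0$. For $j\in S_{p+1}\setminus S_p$ this point already has $\delta=\mydelta^{[p]}$, so your later ``staircase'' points $v^{U_p}$ (which you describe as $U_0$ together with one element of $S_{p+1}\setminus S_p$) \emph{coincide} with some of the $z$-flip points. In the extreme case $l=0$, $U_0=\emptyset$, every staircase point is literally one of the singletons $v^{\{j\}}$ already listed, and you have only $n+1$ distinct points rather than $n+m+1$. Even if you patch this by forcing the staircase points to differ (e.g.\ a second element), your block-triangular claim is no longer free: clearing the $z$-block subtracts columns whose $\delta$-parts are \emph{not} zero, so the residual $\delta$-block is not the clean staircase-difference matrix you assert.

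The paper sidesteps both issues by anchoring on $\delta=\mathbf{0}$ instead of $\mathbf{1}$: every ``$z$-direction'' point is required to contain the index $1\in S_1$ (for $l=0$ these are $v^{\{1\}}$, $v^{\{2\}}$, and $v^{\{1,j\}}$ for $j\neq 1$), so all $n+1$ of them share $\delta=\mathbf{0}$ and the differences have $\delta$-part identically zero. The $m$ staircase points $v^{\{k_i+1\}}$ (singletons outside $S_i$ but inside $S_{i+1}$) then genuinely lie outside this affine span, and the block-triangular structure is exactly as you describe---now provably, because the top-right block is zero. For $l\ge 1$ the paper shifts everything by a fixed set $Q\subseteq J\setminus S_m$ of size $l$ and repeats the argument. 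Your proposal becomes correct with this single change of base point.
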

\begin{proof}(sketch) We consider the two following cases separately:
	
	If $l=0$, we argue that the following $m+n+1$ points in $\conv(\XJD)$ are  affinely independent: 
	$$ 	v^{\{2\}}	=\begin{bmatrix}	~e^2~	\\		~0_m~	\end{bmatrix},~~	
	v^{\{k_i+1\}}	=\begin{bmatrix}	~ e^{k_i+1}~\\	~~d^i~		\end{bmatrix}~~\text{for}~ i\in I,~~
	v^{\{1\}} 	=\begin{bmatrix}	~ e^1~	\\	0_m	\end{bmatrix},~~
	v^{\{1,j\}} 	=\begin{bmatrix}	~ e^1+e^j~	\\	~~0_m	\end{bmatrix}~~\text{for}~ j\in J\setminus\{1\}. 
	$$
	
	If, on the other hand, $l\geq 1$, then we let $Q=\{n-l+1,\ldots,n\}\subseteq J\setminus S_m$ where $|Q|=l$, and consider the following $m+n+1$ points in $\conv(\XJDd)$:
	$$ 	v^{\{2\}\cup Q}	,~~~	v^{\{k_i+1\}\cup Q}~~\text{for}~ i\in I,~~~
	v^{\{1\}\cup Q} ,~~~
	v^{\{1,j\}\cup Q\setminus\{n\}}~~\text{for}~ j\in J\setminus\{1\}\setminus Q,~~~
	v^{\{1\}\cup Q\setminus\{j\}}~~\text{for}~ j\in Q,
	$$
	and argue that they are affinely independent.
	The detailed proof is presented in Appendix.
\end{proof}

Lemma \ref{dimlem} also implies that $\conv(\DJDd)$ 
is  full-dimensional.
In addition, it is easy to see that 
\begin{align}
\DJDd &= \big\{\delta\in\{0,1\}^m:\delta_1\geq\delta_2\geq\ldots\geq\delta_m\big\}. \label{eqdelta1}\\
\intertext{Moreover, as the constraint matrix defining $\DJDd$ above is totally unimodular, we also have}
\conv(\DJDd) &= \big\{\delta\in\R^m:1\ge\delta_1\geq\delta_2\geq\ldots\geq\delta_m\ge0\big\}. \label{eqdelta2}
\end{align}
From now on we will denote the $m+1$ vectors in $\DJDd$ as 
\begin{align}\DJDd~=~\big\{\mydelta^{[0]},\mydelta^{[1]},\ldots,\mydelta^{[m]}\big\}\end{align}
where $\mydelta^{[0]} = {\bf 0}$ and, for  $i \in I$, the vector $\mydelta^{[i]}$ has the first $i$ components equal to 1 and the rest equal to zero.
Note that these vectors are affinely independent. 

We start with characterizing facet-defining inequalities for $\conv(\XJDd)$ that have zero coefficients for all of the $z_j$ variables.
\begin{lem}\label{zerocoef}
	If $\beta^T\delta\leq\gamma$ defines a facet of $\conv(\XJDd)$, then it is a multiple of an inequality from (\ref{mc4}) or (\ref{mc6}).
\end{lem}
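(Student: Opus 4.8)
The plan is to exploit the fact that any inequality $\beta^T\delta\leq\gamma$ valid for $\conv(\XJDd)$ is, by the discussion around \eqref{eq:validity}, equivalent to an inequality valid for $\conv(\DJDd)$: since such an inequality has zero $z$-coefficients, its validity for $\XJDd$ reduces to $\gamma \geq \max_{\bar\delta\in\DJDd}\beta^T\bar\delta$, i.e.\ it is valid for $\conv(\DJDd)$, and a facet of $\conv(\XJDd)$ lying in the hyperplane $\beta^T\delta=\gamma$ must correspond to a facet of $\conv(\DJDd)$ (this is essentially the first alternative in Lemma~\ref{lem:ineq}). So the problem reduces to: identify the facets of $\conv(\DJDd)$ and match them to \eqref{mc4} and \eqref{mc6}.

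First I would invoke \eqref{eqdelta2}, which gives the explicit description $\conv(\DJDd)=\{\delta\in\R^m: 1\geq\delta_1\geq\delta_2\geq\cdots\geq\delta_m\geq 0\}$. Since $\conv(\DJDd)$ is full-dimensional in $\R^m$ (it has the $m+1$ affinely independent vertices $\mydelta^{[0]},\ldots,\mydelta^{[m]}$) and this system has exactly $m+1$ inequalities, namely $\delta_1\leq 1$, $\delta_i-\delta_{i+1}\geq 0$ for $i=1,\ldots,m-1$, and $\delta_m\geq 0$, each of these $m+1$ inequalities must be facet-defining (a full-dimensional polytope in $\R^m$ with a description by $m+1$ inequalities is a simplex, each inequality irredundant). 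Hence up to positive scaling the only facet-defining inequalities of $\conv(\DJDd)$ are precisely $\delta_1\leq 1$, the inequalities $\delta_{i+1}-\delta_i\leq 0$ for $i=1,\ldots,m-1$ (which are \eqref{mc4}), and $-\delta_m\leq 0$ (which is \eqref{mc6}).

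It remains to rule out $\delta_1\leq 1$: although it defines a facet of $\conv(\DJDd)$, I claim it does not define a facet of $\conv(\XJDd)$. The face of $\conv(\XJDd)$ it induces consists of those $(z,\delta)$ with $\delta_1=1$, i.e.\ $z_j=0$ for all $j\in S_1$; but then $\delta_i=1$ for all $i\in I$ as well (since $S_1\subset S_i$), so on this face we also have $\delta_2=1$, i.e.\ the face is contained in the face induced by the valid inequality $\delta_2\leq 1$ (for $m\geq 2$), hence it is not a facet. For $m=1$ one argues directly: the face $\{\delta_1=1\}$ forces $z_j=0$ for $j\in S_1$, and using $l\leq n-|S_1|$ and $u\geq 2$ one checks this face has dimension at most $n-|S_1|\leq n-1 < n = \dim\conv(\XJDd)$, so it is not a facet either. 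Therefore every facet-defining inequality of $\conv(\XJDd)$ of the form $\beta^T\delta\leq\gamma$ is (a positive multiple of) one of \eqref{mc4} or \eqref{mc6}, after translating by the equalities — and since $\conv(\XJDd)$ is full-dimensional by Lemma~\ref{dimlem}, the representation is in fact unique up to scaling, so no translation is needed.

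The main obstacle I anticipate is the bookkeeping in the reduction step: making precise why a facet of $\conv(\XJDd)$ contained in $\{\beta^T\delta=\gamma\}$ must arise from a facet of $\conv(\DJDd)$. The cleanest route is to note that $\proj_\delta$ maps $\conv(\XJDd)$ onto $\conv(\DJDd)$, so $\beta^T\delta\leq\gamma$ valid for $\conv(\XJDd)$ means it is valid for $\conv(\DJDd)$; the facet $F=\{(z,\delta)\in\conv(\XJDd):\beta^T\delta=\gamma\}$ has dimension $n+m-1$, and its $\delta$-projection lies in the face $F'$ of $\conv(\DJDd)$ cut out by $\beta^T\delta=\gamma$; if $F'$ were not a facet of $\conv(\DJDd)$, then $\dim F' \le m-2$, and since the fiber over any $\bar\delta$ is $\conv(\XJDd(\bar\delta))\subseteq\R^n$, one gets $\dim F \le (m-2)+n < n+m-1$, a contradiction. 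So $F'$ is a facet of $\conv(\DJDd)$, and we are in the situation already handled above. This fiber-dimension argument is essentially the content of Lemma~\ref{lem:ineq}, so alternatively one simply cites that lemma and then runs the facet-classification of $\conv(\DJDd)$ together with the elimination of $\delta_1\leq 1$.
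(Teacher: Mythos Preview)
Your overall strategy matches the paper's: reduce to the facets of $\conv(\DJDd)$ via \eqref{eqdelta2}, then eliminate $\delta_1\leq 1$. The fiber-dimension argument you give for the reduction is a correct (and more detailed) version of what the paper states in one line using full-dimensionality.

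However, your elimination of $\delta_1\leq 1$ for $m\geq 2$ contains a genuine error. You write that $\delta_1=1$ forces $z_j=0$ for $j\in S_1$ and ``then $\delta_i=1$ for all $i\in I$ as well (since $S_1\subset S_i$).'' This implication is backwards: because $S_1\subset S_i$, it is $\delta_i=1$ that implies $\delta_1=1$, not the converse. Concretely, if $z$ vanishes on $S_1$ but $z_j=1$ for some $j\in S_2\setminus S_1$, then $\delta_1=1$ while $\delta_2=0$. So the face $\{\delta_1=1\}$ is \emph{not} contained in $\{\delta_2=1\}$, and your argument collapses.

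The paper eliminates $\delta_1\leq 1$ differently and more directly: it is the sum of the valid inequalities $z_j+\delta_1\leq 1$ (from \eqref{mc1}) and $-z_j\leq 0$ (from \eqref{mc7}) for any $j\in S_1$, hence redundant and not facet-defining for the full-dimensional polytope $\conv(\XJDd)$. Alternatively, you could repair your own argument by the same dimension count you used for $m=1$: on $\{\delta_1=1\}$ we have $z_j=0$ for all $j\in S_1$, and since $|S_1|\geq 2$ this imposes at least two independent linear equalities, so the face has dimension at most $n+m-2<n+m-1$.
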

\begin{proof}
	As  $\conv(\XJDd)$ and $\conv(\DJDd)$ are full-dimensional polytopes, if $\beta^T\delta\leq\gamma$ defines a facet of $\conv(\XJDd)$, then it also defines a facet of $\conv(\DJDd)$. 			
	The only facet-defining inequality for $\conv(\DJDd)$, see (\ref{eqdelta2}), that is not of the form (\ref{mc4}) or (\ref{mc6}) is  $1\ge\delta_1$. However,  $1\ge\delta_1$ cannot define a facet of $\conv(\XJDd)$ as it is implied by \eqref{mc1} and \eqref{mc7} for $i=1$ and any $j\in S_1$.
	Therefore, the only facet-defining inequalities of  $\conv(\DJDd)$ that can also define facets of $\conv(\XJDd)$ are of the form (\ref{mc4}) and (\ref{mc6}).
\end{proof}

Under the assumptions (i) $u \geq 2$ and (ii)  $l \leq n - |S_m|$, we have $\DJDd=\Delta^{0,n}$ and consequently
$$\conv(\XJDd)=\conv(\XJD)\cap\conv(\XJd),$$
 by Theorem \ref{thm:decomp}.
We next study $\conv(\XJD)$ and $\conv(\XJd)$ separately.
\subsection{Trivial facets of $\conv(\XJD)$}\label{nestedconv}

As $\conv(\XJD)$ is full-dimensional, all facet-defining inequalities for $\conv(\XJD)$ are uniquely defined up to multiplication by a positive scalar. We have already characterized all facets of the form $\beta^T\delta\leq\gamma$ in Lemma \ref{zerocoef}. We now characterize facet-defining inequalities of the form $\alpha^Tz+\beta^T\delta\leq\gamma$ for $\conv(\XJD)$ with $\alpha\leq 0$ and $\alpha\neq 0$.

\begin{lem}\label{negativecoef}
	Let $\alpha^Tz+\beta^T\delta\leq\gamma$ be a facet-defining inequality for $\conv(\XJD)$. If $\alpha\leq 0$ and $\alpha\neq 0$, then
	the inequality is a multiple of one of the inequalities  (\ref{mc5}),(\ref{mc2}) or (\ref{mc7}).
\end{lem}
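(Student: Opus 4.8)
The plan is to exploit the structural results already established, in particular Lemma~\ref{basicthm}, which tells us that a facet of $\conv(\XJD)$ is defined by an inequality whose $z$-coefficient vector $\bar\alpha$ lies in $\{0,\kappa\}^{|J|}$. Since we are in the case $\alpha\le 0$ and $\alpha\ne 0$, after scaling we may assume $\kappa=-1$, so $\bar\alpha=-\one_T$ for some nonempty $T\subseteq J$, i.e. the inequality has the form $-\sum_{j\in T}z_j+\bar\beta^T\delta\le\bar\gamma$. By Lemma~\ref{lem:ineq} (and the discussion after it), either $F$ is defined by an inequality of the form $(\beta')^T\delta\le\gamma'$ --- which is ruled out here since $\alpha\ne0$ and such facets were already classified in Lemma~\ref{zerocoef} --- or \eqref{eq:validity} holds as equality for all $\bar\delta\in\DJD$, so that $(\bar\gamma,\bar\beta)=(A_\cS)^{-1}\nu_{\bar\alpha}$ is completely determined by $T$ via \eqref{inq:qwer}. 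Thus the whole classification reduces to: for each nonempty $T\subseteq J$, compute $\nu_{-\one_T}$ explicitly, solve for $(\bar\gamma,\bar\beta)$, and determine for which $T$ the resulting inequality is facet-defining; the claim is that the only such $T$ give (up to scaling) \eqref{mc2}, \eqref{mc5}, or \eqref{mc7}.

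The concrete computation of $\nu_{-\one_T}$ goes through the description of $\XJD(\mydelta^{[i]})$ from Lemma~\ref{lem:disj} / Example~\ref{nestedexample}: for $\mydelta^{[0]}=\zero$ we have $J_0=\emptyset$, $I^*=\{1\}$ with $J_1=S_1$; for $\mydelta^{[p]}$ with $1\le p\le m-1$ we have $J_0=S_p$, $I^*=\{p+1\}$ with $J_{p+1}=S_{p+1}\setminus S_p$; and for $\mydelta^{[m]}$ we have $J_0=S_m$, $I^*=\emptyset$. Since $l=0$ and $\bar\alpha=-\one_T\le0$, the maximum of $\bar\alpha^Tz$ over $\XJD(\mydelta^{[i]})$ is achieved by setting as few $z_j=1$ as possible: pick exactly one index in each required set $J_i$, choosing it outside $T$ if possible, and set all other $z_j=0$. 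Hence $[\nu_{-\one_T}]_i$ equals $-1$ times the number of sets among $\{J_i:i\in I^*\}$ that are contained in $T$ (for the $\mydelta^{[i]}$ in question), which is $0$ or $-1$ in each case depending only on whether $S_1\subseteq T$, whether $S_{p+1}\setminus S_p\subseteq T$, etc. Feeding these piecewise-constant values through $(A_\cS)^{-1}$ --- equivalently, solving the $m+1$ equations $\bar\gamma - \bar\beta^T\mydelta^{[i]} = [\nu_{-\one_T}]_i$, which telescope because consecutive $\mydelta^{[i]}$ differ in one coordinate --- yields $\bar\gamma=[\nu_{-\one_T}]_0$ and $\bar\beta_i = [\nu_{-\one_T}]_{i-1}-[\nu_{-\one_T}]_i$ for $i\in I$ (using $\mydelta^{[0]}=\zero$ and the telescoping indexing). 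One then reads off the resulting inequality and compares it with \eqref{mc2}, \eqref{mc5}, \eqref{mc7}.

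After that, it remains to argue that among all these inequalities (indexed by nonempty $T$), only the ones matching \eqref{mc2}, \eqref{mc5}, \eqref{mc7} are actually facet-defining, and that all three of those do arise. The facet-nondefining ones should be dispatched by showing they are dominated by, or are nonnegative combinations of, the listed inequalities together with the trivial ones \eqref{mc1}, \eqref{mc6}, \eqref{mc7} --- e.g. a generic $T$ that is not one of $S_1$, $S_{i+1}\setminus S_i$, or a singleton in $J$ will produce an inequality whose support/coefficients decompose; intuitively, since $\nu_{-\one_T}$ depends on $T$ only through the finitely many ``threshold'' events ($S_1\subseteq T$ and $S_{i+1}\setminus S_i\subseteq T$ for $i=1,\dots,m-1$), two different $T$'s producing the same event pattern give the same $(\bar\beta,\bar\gamma)$, and the minimal $T$ realizing each pattern is exactly one of the claimed sets (or the inequality is implied by bound constraints). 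Showing each of \eqref{mc2}, \eqref{mc5}, \eqref{mc7} is facet-defining can be done directly by exhibiting $m+n$ affinely independent points of $\XJD$ on the corresponding face, reusing the point families $v^U$ from Definition~\ref{def:pts} and the constructions in the proof of Lemma~\ref{dimlem}.

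The main obstacle I expect is the bookkeeping in the last step: cleanly ruling out \emph{all} other $T$ without a messy case analysis. The cleanest route is probably to observe that $\nu_{-\one_T}$ is determined by the vector of indicators $\bigl(\one[S_1\subseteq T],\,\one[S_2\setminus S_1\subseteq T],\dots,\one[S_m\setminus S_{m-1}\subseteq T]\bigr)$, parametrize the possible inequalities by this Boolean vector, write each as an explicit nonnegative combination of \eqref{mc2}, \eqref{mc5}, \eqref{mc7}, \eqref{mc1}, \eqref{mc6}, \eqref{mc7}, and note that a nontrivial such combination (more than one term, or a term that is a bound) cannot be facet-defining; conversely single-term patterns give precisely \eqref{mc2} (pattern $S_1\subseteq T$ alone), \eqref{mc5} (pattern $S_{i+1}\setminus S_i\subseteq T$ alone), and \eqref{mc7} (the empty pattern, $T$ a singleton with no containment). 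That reduces the whole argument to one linear-algebra computation plus a short decomposition lemma.
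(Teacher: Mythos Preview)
Your approach is essentially the same as the paper's: reduce to $\alpha\in\{0,-1\}^{|J|}$ via Lemma~\ref{basicthm}, invoke Lemma~\ref{lem:ineq} (using full-dimensionality to rule out the $(\beta')^T\delta\le\gamma'$ branch when $\alpha\neq0$), compute $[\nu_{-\one_T}]_p=-\one[S_{p+1}\setminus S_p\subseteq T]$ for $p=0,\dots,m-1$ and $[\nu_{-\one_T}]_m=0$, solve the telescoping system to get $\bar\gamma=\theta_0$ and $\bar\beta_i=\theta_{i-1}-\theta_i$, and then exhibit the inequality as a nonnegative combination of \eqref{mc2}, \eqref{mc5}, \eqref{mc7}.

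The one genuine difference is that the paper first inserts a short ``swapping'' argument to show that, for a facet, $\alpha$ must be constant on each block $S_{i+1}\setminus S_i$ and zero on $J\setminus S_m$ (so $T$ is a union of blocks inside $S_m$), and only then writes the decomposition using \eqref{mc2} and \eqref{mc5} alone. Your route skips that step and handles arbitrary $T$ directly, absorbing the leftover $-z_j$ terms (for $j\in T$ lying in a block not fully contained in $T$, or $j\in T\setminus S_m$) into copies of \eqref{mc7}. Both work; yours is arguably cleaner, at the cost of keeping \eqref{mc7} in the decomposition until the very end.

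Two small corrections to your write-up. First, the final paragraph about exhibiting $m+n$ affinely independent points to show \eqref{mc2}, \eqref{mc5}, \eqref{mc7} are facet-defining is unnecessary for this lemma: the statement is only the implication ``if facet-defining and $\alpha\le0$, $\alpha\neq0$, then it is one of these,'' which follows immediately once you have the nonnegative-combination decomposition and full-dimensionality. Second, your characterization ``\eqref{mc7} (the empty pattern, $T$ a singleton)'' is slightly off when some block $S_{i+1}\setminus S_i$ is itself a singleton $\{j\}$: then $T=\{j\}$ triggers $\theta_i=-1$ and yields \eqref{mc5}, not \eqref{mc7}. This does not affect the lemma (you still land on one of the three families), but the case split should be stated in terms of the indicator pattern rather than $|T|$.
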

\begin{proof}
	Let $\alpha^Tz + \beta^T\delta \leq \gamma$ be a facet-defining inequality for $\conv(\XJD)$ satisfying the conditions of the Lemma, and assume it defines the facet $F=\{( z, \delta) \in\conv(\XJD)\::\:\alpha^T z + \beta^T \delta = \gamma\}$. 
	By Lemma~\ref{basicthm}, we can assume without loss of generality that all nonzero components of $\alpha$ are equal to $-1$.
	If $\alpha^Tz + \beta^T\delta \leq \gamma$ is the same as $z_j \geq 0$ for some $j \in J$, then it is one of the inequalities in \eqref{mc7} and the result follows. We will henceforth assume this is not the case.

	If  $|S_i\setminus S_{i-1}|\ge2$ for any $i \in I$, then we will next argue that
	\begin{equation} \alpha_j = \alpha_k  \mbox{ for all distinct } j,k \in S_i \setminus S_{i-1}. \label{eq-equalalpha}\end{equation} If this is not true, then we can assume, without loss of generality, that $\alpha_j = -1$ and $\alpha_k = 0$ for some $j,k\in S_i\setminus S_{i-1}$. 
	As $F$ is not defined by $z_j \geq 0$, we can assume that there is a point $(\bar z,\bar \delta) \in F$ with the property that $\bar z_j = 1$.
	Consider the point $(z',\bar \delta)$ where the components of $z'$ are the same as the components of $\bar z$, except that $z'_j = 0$ and $z'_k = 1$. It is easy to see that $(z',\bar \delta) \in \XJD$, and  $\alpha^Tz' =  \alpha^T\bar z+1 $ which implies that $\alpha^T z' + \beta^T\bar\delta > \gamma$. This contradicts the fact that $\alpha^Tz + \beta^T\delta \leq \gamma$ is a valid inequality for $ \XJD$.
	Similarly, for any $j\in J\setminus S_m$, there exists a point  $(\bar z,\bar \delta) \in F$ such that $\bar z_j = 1$.
	If $\alpha_j=-1$, then constructing a new point by changing  $\bar z_j$ to 0 shows that the inequality cannot be valid.
	Consequently, $\alpha_j=0$ for all  $j\in J\setminus S_m$.
	
	As $\conv(\XJD)$ is full-dimensional, Lemma~\ref{lem:ineq} and $\alpha \neq 0$ together imply that 
	\begin{equation}\label{simple1}	\gamma-\beta^T\mydelta^{[i]}=\max_{z\in\XJD(\mydelta^{[i]})}\alpha^Tz \end{equation}
	for $i=0, \ldots, m$.
 	First note that as $\alpha \leq 0$ and $z\ge0$, we have  $\max_{z\in \XJD(\mydelta^{[m]})} \alpha^Tz = 0.$
	Moreover, 	for $i=0, \ldots, m-1$, if  $\bar z \in \XJD(\mydelta^{[i]})$, then $\bar z_j = 0$ for $j \in S_i$ and $\sum_{j \in S_{i+1}\setminus S_i} \bar z_j \geq 1$. Therefore, 
	\[ \max_{z\in\XJD(\mydelta^{[i]})}\alpha^Tz = \max_{j\in S_{i+1}\setminus S_i}\{\alpha_j\}. \]
	Consider  $\theta\in\R^{m+1}$ where $\theta_i$ equals to the right-hand side of (\ref{simple1}). Then $\theta_m = 0$, and for $i=0, \ldots, m-1$ we have $\theta_i \in \{0,-1\}$, with $\theta_i = \alpha_j$ for all $j \in S_{i+1} \setminus S_i$.
	Then (\ref{simple1}) implies that
	$$	\gamma=\theta_0,~~~
	\gamma-\sum_{i=1}^k\beta_i= \theta_k\text{ for } k\in\{1,\ldots,m-1\},~~~
	\gamma-\sum_{i=1}^m\beta_i=0.
	$$
	These equations have the unique solution:
	\begin{equation}\gamma=\theta_0,~~~
	\beta_i=\theta_{i-1}-\theta_{i}\text{ for } i\in\{1,\ldots,m-1\},~~~
	\beta_m=\theta_{m-1}. \label{eq-betaval}\end{equation}
	
	We next observe that  $\alpha_j=0$ for all  $j\in J\setminus S_m$ and $\theta_i\le0$ for all $i\in\{0\}\cup I$, and therefore
	\begin{align*}
	\alpha^Tz+\beta^T\delta = 
	&~\sum_{i =1}^m\theta_{i-1}\Big(\sum_{j\in S_i\setminus S_{i-1}}z_j\Big)
	+\sum_{i=1}^{m-1}(\theta_{i-1}-\theta_{i})\delta_i
	+\theta_{m-1}\delta_m\\
		=&~\theta_0\Big(\underbrace{\delta_1+\sum_{j\in S_1}z_j}_{\ge1}\Big)+\sum_{i=1}^{m-1}\theta_{i}\Big(\underbrace{\delta_{i+1}-\delta_{i}+\sum_{j\in S_{i+1}\setminus S_{i}}z_j}_{\ge0}\Big)\le \theta_0+0 ~=~ \gamma.
		\end{align*}
	Therefore, inequality  $\alpha^Tz+\beta^T\delta\leq\gamma$ is implied by inequalities (\ref{mc2}) and  (\ref{mc5}). As it is facet-defining, it must indeed be one of them.
\end{proof}

\subsection{Convex hull description of $\XJD$}\label{nested_mixsect}
We next derive a family of valid inequalities for  $\conv(\XJD)$ using the  mixing procedure \cite{gunluk2001mixing}.
The inequalities we derive here apply when $\cS=\{S_i\}_{i\in I}$ is a family of nested sets and as we show later, together with inequalities  (\ref{mc3})-(\ref{mc8}), they give a complete description of $\conv(\XJD)$.
Later in Section \ref{general_mixsect}, we will generalize these inequalities for the case when $\cS$ is not necessarily nested.

For some positive integer $k$, let  $1> b_k >b_{k-1}>\ldots>  b_1 >0,$ be given and let
\begin{equation}
Q~=~\Big\{s\in\R,z\in\Z^k\::\: s+z_i\ge b_i\quad \text{for }i=1,\ldots,k,~~s\ge0\Big\}.\label{eq:mixset}\end{equation} 
Then, the following {\em type I mixing inequality} is known to be valid for $Q$ (see \cite{gunluk2001mixing}):
\begin{alignat}{10}
s&+  b_1z_1 &+& \sum_{i=2}^k (b_i-b_{i-1})z_i &~\ge~& b_k\label{eq:mixing1}.
\end{alignat}
The inequalities $s+z_i\ge b_i$ are called  {\em base} inequalities and note that inequality \eqref{eq:mixing1} combines the  mixed-integer rounding inequalities  $  s + b_i z_i\ge b_i$ associated with the base inequalities using a ``telescopic'' sum. 
We next derive some valid inequalities for $\XJD$ to use as base inequalities for applying the mixing procedure.

Let $S'\subseteq J$  and  $i\in I$ be given and let $M>n$ be a fixed constant. 
Using the fact that $z_j\le1$,  $1-\delta_i-z_j\ge 0$, and $\delta_i\le1$, for all $j\in S_i$,
we can derive the following valid (base) inequality for  $\conv(\XJD)$:
\begin{align*}
\frac1{M}\Big({u-\sum_{j\in S'}z_j}\Big)+(1-\delta_i)
&=~ \frac{1}{M} \Big( u-\sum_{j\in S'\setminus S_i}{z_j}\Big)
+\frac{1}{M} \sum_{j\in S'\cap S_i}({1-\delta_i-z_j})
+\frac{1}{M} \Big({M-|S'\cap S_i|}\Big)(1-\delta_i)\\[.2cm]
&\geq~~ \frac{1}{M} \Big( u-\sum_{j\in S'\setminus S_i}{z_j}\Big)~~\ge~~\frac{1}{M}\Big( u-| S'\setminus S_i|\Big).
\end{align*}
Moreover, if  $|S'\setminus S_p|\leq u-1$ for some $p\in I$, then the right-hand side of this valid inequality
$$ \frac1{M}\Big({u-\sum_{j\in S'}z_j}\Big)+(1-\delta_i)\geq~ \frac{1}{M}\Big( u-| S'\setminus S_i|\Big)~~~$$
is strictly between 0 and 1 for all $i=p,p+1,\ldots,m$. 
Therefore, we can write a set of the form \eqref{eq:mixset}  using these  inequalities as the base inequalities where we  treat the term $\frac1{M}({u-\sum_{j\in S'}z_j})$  as a nonnegative continuous variable and the term $(1-\delta_i)$ as an integer variable for all $i=p,p+1,\ldots,m$.
Consequently, the resulting type I mixing inequality,
\begin{displaymath}
\frac1{M}\Big({u-\sum_{j\in S'}z_j}\Big)+\frac1{M}\Big({u-|S'\setminus S_p|}\Big)(1-\delta_p)
+\frac1{M}\sum_{i=p+1}^{m}\Big({|S'\setminus S_{i-1}|-|S'\setminus S_i|}\Big)(1-\delta_i)\ge \frac{1}{M}\Big( u-| S'\setminus S_m|\Big),
\end{displaymath}
which can be simplified to
\begin{displaymath}
\frac1{M}\Big({u-\sum_{j\in S'}z_j}\Big)\geq\frac1{M}\Big({u-|S'\setminus S_p|}\Big)\delta_p
+\frac1{M}\sum_{i=p+1}^{m}\Big({|S'\setminus S_{i-1}|-|S'\setminus S_i|}\Big)\delta_i,
\end{displaymath}
is valid for $\conv(\XJD)$.
After multiplying  the inequality by $M$ and rearranging the terms, we obtain the following valid inequality  for  $\conv(\XJD)$

\begin{equation}\label{nestedmix}
\sum_{j\in S'}z_j+ \Big(u-|S'\setminus S_p|\Big)\delta_p+\sum_{i=p+1}^m\Big(|S'\setminus S_{i-1}|-|S'\setminus S_i|\Big)\delta_i\leq u.
\end{equation}

We next give an inequality description  of  $\conv(\XJD)$ using the mixing inequalities.

\begin{thm}\label{thm:0u}
	A complete inequality description of  $\conv(\XJD)$ is given by inequalities (\ref{mc3})-(\ref{mc8}) together with inequalities (\ref{nestedmix})  for all $p\in I$ and $S'\subseteq J$ such that  $|S'\setminus S_p|\leq u-1$.
\end{thm}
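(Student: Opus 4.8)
The plan is to prove the theorem by showing that every facet-defining inequality of $\conv(\XJD)$ is, up to positive scaling, one of the inequalities listed. Since $\conv(\XJD)$ is full-dimensional (Lemma~\ref{dimlem}), each facet has a unique defining inequality up to positive scaling, so it suffices to enumerate all facets. By Lemma~\ref{zerocoef} we may dispose of the facets with $\alpha=0$: they are multiples of \eqref{mc4} or \eqref{mc6}, hence among \eqref{mc3}--\eqref{mc8}. By Lemma~\ref{basicthm} (applied with $u=n$, $l=0$), every remaining facet is defined by an inequality $\alpha^Tz+\beta^T\delta\le\gamma$ with $\alpha\in\{0,\kappa\}^{|J|}$ for some $\kappa\ne 0$. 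After scaling we may assume $\kappa\in\{-1,+1\}$. The case $\kappa=-1$ (i.e.\ $\alpha\le 0$, $\alpha\ne 0$) is completely settled by Lemma~\ref{negativecoef}: such facets are multiples of \eqref{mc5}, \eqref{mc2}, or \eqref{mc7}, all of which lie in \eqref{mc3}--\eqref{mc8}. So the entire content of the theorem reduces to the case $\kappa=+1$, i.e.\ facet-defining inequalities with $\alpha\in\{0,1\}^{|J|}$, $\alpha\ne 0$; writing $S'=\{j\in J:\alpha_j=1\}$, these are exactly the ones I must show coincide with some \eqref{nestedmix} (for $p\in I$, $|S'\setminus S_p|\le u-1$) or with the easy inequality $\sum_{j\in S'}z_j\le u$ (which is \eqref{mc3}).

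The core argument is then: fix a facet $F$ defined by $\sum_{j\in S'}z_j+\beta^T\delta\le\gamma$ with $S'\ne\emptyset$. Since $\conv(\XJD)$ is full-dimensional and $\alpha\ne 0$, Lemma~\ref{lem:ineq} forces \eqref{eq:validity} to hold with equality at every $\mydelta^{[i]}$, $i=0,\dots,m$. As in the proof of Lemma~\ref{negativecoef}, I compute $\max_{z\in\XJD(\mydelta^{[i]})}\sum_{j\in S'}z_j$ explicitly: for $z\in\XJD(\mydelta^{[i]})$ we have $z_j=0$ for $j\in S_i$, and $z$ is otherwise free except that $\sum_{j\in S_{k+1}\setminus S_k}z_j\ge 1$ for each $k\ge i$ (this is the constraint system of Lemma~\ref{lem:disj} with $u=n$ imposing no upper bound). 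Hence the maximum of $\sum_{j\in S'}z_j$ over $\XJD(\mydelta^{[i]})$ equals $|S'\setminus S_i|$ if $|S'\setminus S_i|\ge 1$, and equals $0$ (attained by $z=0$) if $S'\subseteq S_i$; I should double-check the borderline where $S'\subseteq S_i$ but some $S_{k+1}\setminus S_k$ for $k\ge i$ is forced nonzero — there the max is still $0$ since those forced coordinates lie outside $S'$. Setting $\theta_i:=\max_{z\in\XJD(\mydelta^{[i]})}\sum_{j\in S'}z_j$, the $m+1$ equations $\gamma-\sum_{r=1}^{i}\beta_r=\theta_i$ (with $\theta_0=\gamma$) have the unique solution $\gamma=\theta_0$, $\beta_i=\theta_{i-1}-\theta_i$, exactly paralleling \eqref{eq-betaval}. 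It remains to identify the resulting inequality. If $S'\subseteq S_0=\emptyset$ is impossible since $S'\ne\emptyset$; if $|S'\setminus S_i|\ge 1$ for all $i$ including $i=m$, then every $\theta_i=|S'\setminus S_i|$ and $\theta_0=|S'|\le u$ is forced for validity at $\mydelta^{[0]}=\mathbf 0$ where the true max over $\XJD(\mathbf 0)$ is $\min\{|S'|,u\}=|S'|$ — wait, with $u=n$ this is just $|S'|$ — and one checks this reconstructs $\sum_{j\in S'}z_j\le|S'|$, redundant unless $|S'|=u=n$; in general the interesting case is when there is a smallest index $p$ with $|S'\setminus S_p|\le u-1$, and then plugging $\theta_i=|S'\setminus S_i|$ for $i\ge p$ and $\theta_i=|S'|$ for... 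Let me restructure: I will let $p$ be minimal in $I$ with $|S'\setminus S_p|\le u-1$ (such $p$ exists because $|S'\setminus S_m|\le |J\setminus S_m|=n-|S_m|\le u-1$ when... actually here $u=n$, so one must argue $p$ exists or handle $S'\subseteq$ small sets separately). For $i<p$ we get $\theta_i=u$ (the max is capped; recall for $\XJD$, $u=n$, so this needs care — the right statement is $\theta_i=\min\{|S'\setminus S_i|,u\}$), and for $i\ge p$, $\theta_i=|S'\setminus S_i|<u$. Substituting into $\gamma=\theta_0$, $\beta_i=\theta_{i-1}-\theta_i$ yields $\gamma=u$, $\beta_i=0$ for $i<p$, $\beta_p=u-|S'\setminus S_p|$, and $\beta_i=|S'\setminus S_{i-1}|-|S'\setminus S_i|$ for $i>p$ — precisely the coefficients of \eqref{nestedmix}. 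Conversely I already know \eqref{nestedmix} is valid, so this finishes the identification.

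The remaining gap is the \emph{converse direction}: I have shown every facet is one of the listed inequalities, but I must also confirm the listed inequalities \eqref{mc3}--\eqref{mc8} together with \eqref{nestedmix} actually form a valid formulation of $\XJD$ — i.e.\ that their feasible region contains only the intended points. This is the standard ``formulation is correct'' check: the integer points satisfying \eqref{mc3}--\eqref{mc8} are exactly those of $\XJD$ (discussed already in the text around \eqref{xmc0}--\eqref{xmc7} and the 2-link reformulation), and \eqref{nestedmix} being valid adds nothing that excludes a feasible integer point. Together with the fact that the listed inequalities include a complete facet list, we conclude equality with $\conv(\XJD)$. I expect the \textbf{main obstacle} to be the careful bookkeeping in the case analysis for $\theta_i$ near the ``threshold'' index $p$ — in particular verifying that $p\in I$ always exists (using $u=n\ge|S_m|+\text{something}$, or handling $S'$ with $S'\setminus S_m=\emptyset$ and $|S'|=n$ as the degenerate sub-case giving $\sum z_j\le n$) and that the formula $\max_{z\in\XJD(\mydelta^{[i]})}\sum_{j\in S'}z_j = \min\{|S'\setminus S_i|,\,n\}=|S'\setminus S_i|$ holds with the ``$\ge1$ per gap'' constraints, which is where a subtle off-by-one could hide. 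Everything else is a direct transcription of the template in Lemmas~\ref{negativecoef} and~\ref{basicthm}.
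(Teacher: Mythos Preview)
Your proposal has a fundamental confusion about the set $\XJD$. You repeatedly write ``with $u=n$'' and ``for $\XJD$, $u=n$'', but $\XJD$ is $X^{0,u}$: the lower bound is $0$ while the upper bound is the given $u$ (typically $u<n$). This matters everywhere you compute $\theta_i$. Relatedly, your description of $\XJD(\mydelta^{[i]})$ is wrong: from Example~\ref{nestedexample} (or Lemma~\ref{lem:disj}), the only ``gap'' constraint is $\sum_{j\in S_{i+1}\setminus S_i}z_j\ge 1$, not one for every $k\ge i$; and the cardinality bound $\sum_{j\in J}z_j\le u$ is present. Consequently your formula $\theta_i=\min\{|S'\setminus S_i|,u\}$ is incorrect: the paper's computation gives
\[
\theta_i \;=\; \min\bigl\{\,u-\mathbbm{1}_{\{S'\cap(S_{i+1}\setminus S_i)=\emptyset\}},\ |S'\setminus S_i|\,\bigr\},\qquad i=0,\dots,m-1,
\]
because if $S'$ misses the layer $S_{i+1}\setminus S_i$ then one of the $u$ available units must be ``wasted'' there.

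This extra indicator term is not cosmetic: it changes the resulting $(\gamma,\beta)$, and in general the facet you reconstruct is \emph{not} literally an inequality \eqref{nestedmix} with the same support $S'$. The paper's argument is therefore structured differently from what you sketch. After solving for $(\gamma,\beta)$ it performs a three-way case split on $|S'|$ and $|S'\setminus S_m|$ versus $u$; in two of the cases it shows the reconstructed inequality is dominated by inequalities already in \eqref{mc3}--\eqref{mc8}, and in the remaining case it shows the inequality is dominated by \eqref{mc2}, \eqref{mc5}, and a mixing inequality \eqref{nestedmix} taken with the \emph{enlarged} set $S''=S'\cup S_h$ (where $h=\min\{i:|S'\setminus S_i|\le u-1\}$) and $p=h$. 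So the proof does not identify the facet with a specific \eqref{nestedmix}; it shows the facet is implied by the listed system, which is what is actually needed. Your plan of matching coefficients directly would fail precisely because of the indicator terms you dropped.
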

\begin{proof} 
	Let  $\alpha^Tz+\beta^T\delta\leq\gamma$  be a facet-defining inequality for $\conv(\XJD)$ and note that by Lemma \ref{dimlem} it has a unique representation up to multiplication.
	By Lemma \ref{basicthm}, we can assume that either $\alpha\in\{0,1\}^{|J|} $ or  $\alpha\in\{0,-1\}^{|J|} $.
	Furthermore, by Lemmas \ref{zerocoef} and \ref{negativecoef} we have established that if  $\alpha\leq 0$ (including the case when $\alpha=0$) the inequality  $\alpha^Tz+\beta^T\delta\leq\gamma$ has to be  one of (\ref{mc3})-(\ref{mc8}).
	Therefore, the only remaining case to consider is when $\alpha\in\{0,1\}^{|J|} $ and $\alpha\not=0$.
	
	Let $\bar{S}:=\{j\in J:\alpha_j=1 \}$ and therefore $\alpha^Tz=\sum_{j\in \bar{S}}z_j$.
	Also remember that $\DJD=\{\mydelta^{[0]},\ldots,\mydelta^{[m]}\}$ where the first $p\in I$ components of  $\mydelta^{[p]}\in\{0,1\}^m$ are 1, and the rest components are 0. 
	Then by  Lemma \ref{lem:ineq} the following equations must hold for all  $\mydelta^{[p]}$  with $p\in\{0,\ldots, m-1\}$, 
	\begin{align}
	\gamma-\sum_{i=1}^p\beta_i
	&=~\max\Big\{\bar\alpha^Tz:(z,{\mydelta^{[p]}})\in \XJD\Big\}\nonumber\\
	&=~\max\Big\{\sum_{j\in \bar{S}}z_j:\sum_{j\in J}z_j\leq u;~z_j=0, ~\forall j\in S_p;~
	\sum_{j\in S_{p+1}\setminus S_p}z_j\geq 1~,~z\in\{0,1\}^{|J|}\Big\}\nonumber\\
	&=~\min\Big\{u-\mathbbm{1}_{\{\bar{S}\cap S_{p+1}\setminus S_p=\emptyset \}},~|\bar{S}\setminus S_p|\Big\},\label{uno}
	\end{align}
		where we define $\mathbbm{1}_{A}$ to be 1 if condition $A$ is true, and 0, otherwise.
	Similarly, for ${\mydelta^{[m]}}$, we have 
	\begin{equation}	\gamma-\sum_{i=1}^m\beta_i=\min\{u,~|\bar{S}\setminus S_m|\}. \label{duo}	\end{equation}
	Let $\bar S_i=\bar S\cap S_i$ for $i\in I$  and let $D_1=\bar S_1$ and $D_i=\bar S_{i}\setminus \bar S_{i-1}$  for $i\in \{2,\ldots,m\}$. Note that $\bar S= (\bar S\setminus S_m) \cup (\bigcup_{i=1}^m D_i)$. 
	The unique solution to equations \eqref{uno} and \eqref{duo}  is therefore
	\begin{align*}
	\gamma=&~\min\big\{~u-\mathbbm{1}_{\{D_1=\emptyset \}},~|\bar{S}|~\big\}\\[.1cm]
	\beta_i=&\left\{	\begin{array}{ll}
	\min\big\{~u-\mathbbm{1}_{\{D_{i}=\emptyset \}},~|\bar{S}\setminus S_{i-1}|~\big\}
	-~\min\big\{~u-\mathbbm{1}_{\{D_{i+1}=\emptyset \}},~|\bar{S}\setminus S_{i}|~\big\}
	&\text{for } 1\le i\le m-1,\\
	\min\big\{~u-\mathbbm{1}_{\{D_{m}=\emptyset \}},~|\bar{S}\setminus S_{m-1}|~\big\}
	-~\min\big\{~u,~|\bar{S}\setminus S_m|~\big\}&\text{for }  i = m.\end{array}		\right.
	\end{align*}
	We now consider 3 cases:
	
	\noindent{\bf Case 1: }	$|\bar{S}\setminus S_m|\geq u$. 
	In this case, $|\bar{S}\setminus S_i|\geq u$ also holds for all $i\in I$ and 
	\begin{align*}
	\gamma=&~u-\mathbbm{1}_{\{D_1=\emptyset \}},\\
	\beta_i=&\left\{	\begin{array}{ll}
	~~\mathbbm{1}_{\{D_{i+1}=\emptyset \}}-\mathbbm{1}_{\{D_{i}=\emptyset \}},& \quad i\in\{1,\ldots,m-1\},\\
	-\mathbbm{1}_{\{D_m=\emptyset \}}, &\quad i=m.	\end{array}		\right.
	\end{align*}
	Therefore,
\begin{align*}
\alpha^Tz+\beta^T\delta=&\sum_{j\in D_1}z_j-\mathbbm{1}_{\{D_1=\emptyset \}}(\underbrace{\delta_1}_{\geq 1-\sum_{j\in S_1}z_j})+
\sum_{i=1}^{m-1}\Big[\sum_{j\in D_{i+1}}z_j+\mathbbm{1}_{\{D_{i+1}=\emptyset \}}(\underbrace{\delta_i-\delta_{i+1}}_{\leq\sum_{j\in S_{i+1}\setminus S_i}z_j})\Big]+\sum_{j\in \bar{S}\setminus S_m}z_j
\\
\leq&\underbrace{\sum_{j\in D_1}z_j-\mathbbm{1}_{\{D_1=\emptyset \}}\Big(1-\sum_{j\in S_1}z_j\Big)}_{\leq\sum_{j\in S_1}z_j-\mathbbm{1}_{\{D_1=\emptyset \}}}+
\sum_{i=1}^{m-1}\Big[\underbrace{\sum_{j\in D_{i+1}}z_j+\mathbbm{1}_{\{D_{i+1}=\emptyset \}}\Big(\sum_{j\in S_{i+1}\setminus S_i}z_j\Big)}_{\leq \sum_{j\in S_{i+1}\setminus S_i}z_j}\Big]+\sum_{j\in \bar{S}\setminus S_m}z_j\\
\leq&\sum_{j\in J}z_j-\mathbbm{1}_{\{D_1=\emptyset \}}~\leq~ u-\mathbbm{1}_{\{D_1=\emptyset \}}~=~\gamma.
\end{align*}
In the first inequality above, we use inequalities  $(\ref{mc2})$ and $(\ref{mc5})$ and in the second inequality we use the fact that 
if $\mathbbm{1}_{\{D_i=\emptyset \}}=1$, then $\sum_{j\in D_i}z_j =0$ for all $i\in I$.
	
	Therefore, inequalities (\ref{mc3})-(\ref{mc8}) imply $\alpha^Tz+\beta^T\delta\leq\gamma$.
	
	\noindent{\bf Case 2a: }$|\bar{S}\setminus S_m|\leq u-1$ and $|\bar{S}|\leq u-1$. 
	In this case, equations \eqref{uno} and \eqref{duo} imply
	$$	\gamma=|\bar{S}|,~~\text{and}~~	\beta_i=|\bar{S}\setminus S_{i-1}|-|\bar{S}\setminus S_i|=|D_{i}|, \quad i\in I.$$
	In this case, using inequalities (\ref{mc1}) and (\ref{mc8}), we can write
	\begin{align*}
	\alpha^Tz+\beta^T\delta=~&\sum_{i=1}^{m}\Big[\sum_{j\in D_{i}}(z_j+\delta_{i})\Big]+\sum_{j\in \bar{S}\setminus S_m}z_j
	\leq~\sum_{i=1}^{m}|D_{i}|+| \bar{S}\setminus S_m| ~=~|\bar{S}| ~=~\gamma.
	\end{align*}
	Therefore, all points that satisfy equations (\ref{mc3})-(\ref{mc8}) also satisfy  $\alpha^Tz+\beta^T\delta\leq\gamma$.
	
	\noindent{\bf Case 2b: }	 $|\bar{S}\setminus S_m|\leq u-1$ and $|\bar{S}|\geq u$. 
	Let $h:=\min\{i\in I:|\bar{S}\setminus S_i|\leq u-1 \}$. In this case,
	\begin{align*}
	\gamma&=~u-\mathbbm{1}_{\{D_1=\emptyset \}},\\[.2cm]
	\beta_i&~=\left\{	\begin{array}{ll}
	\mathbbm{1}_{\{D_{i+1}=\emptyset \}}-\mathbbm{1}_{\{D_{i}=\emptyset \}}, &i\in\{1,\ldots,h-1\},\\
	u-\mathbbm{1}_{\{D_h=\emptyset \}}-|\bar{S}\setminus S_{h}|,& i=h,\\
	|\bar{S}\setminus S_{i-1}|-|\bar{S}\setminus S_i|~=~|D_{i}|,~~~~~~&i\in\{h+1,\ldots,m\}.
	\end{array}		\right.
	\end{align*}
	Therefore,
	\begin{align*}
	\alpha^Tz+\beta^T\delta=&\sum_{j\in D_1}z_j-\mathbbm{1}_{\{D_1=\emptyset\}}(\underbrace{\delta_1}_{\geq 1-\sum_{j\in S_1}z_j})+
	\sum_{i=1}^{h-1}\Big[\sum_{j\in D_{i+1}}z_j+\mathbbm{1}_{\{D_{i+1}=\emptyset \}}(\underbrace{\delta_i-\delta_{i+1}}_{\leq \sum_{j\in S_{i+1}\setminus S_i}z_j})\Big]\\
	&+\Big[\sum_{j\in D_{h+1}}z_j+(u-|\bar{S}\setminus S_{h}|)\delta_{h}\Big]+
	\sum_{j=h+1}^m\Big[\sum_{j\in D_{i+1}}z_j+|D_{i}|\delta_i\Big]+\sum_{j\in \bar{S}\setminus S_m}z_j.
	\\
	\le~&\underbrace{\sum_{j\in D_1}z_j-\mathbbm{1}_{\{D_1=\emptyset\}}(1-\sum_{j\in S_1}z_j)}_{\leq\sum_{j\in S_1}z_j-\mathbbm{1}_{\{D_1=\emptyset\}}}+
	\sum_{i=1}^{h-1}\Big[\underbrace{\sum_{j\in D_{i+1}}z_j+\mathbbm{1}_{\{D_{i+1}=\emptyset \}}\sum_{j\in S_{i+1}\setminus S_i}z_j}_{\leq\sum_{j\in S_{i+1}\setminus S_i}z_j}\Big]
	\\
	&+\Big[\sum_{j\in \bar{S}_{h+1}\setminus S_{h}}z_j+(u-|\bar{S}\setminus S_{h}|)\delta_{h}\Big]+
	\sum_{j=h+1}^m\Big[\sum_{j\in D_{i+1}}z_j+|D_{i}|\delta_i\Big]+\sum_{j\in \bar{S}\setminus S_m}z_j
	\\
	\leq~&\sum_{j\in\bar{S}\cup S_{h}}z_j+\big(u-|\bar{S}\setminus S_{h}|\big) \delta_{h}+ \sum_{i=h+1}^m |D_{i}|\delta_i-\mathbbm{1}_{\{D_1=\emptyset\}}
	\\
	\leq~& u-\mathbbm{1}_{\{D_1=\emptyset\}}=~\gamma	
	\end{align*}
	where the last  inequality folows from the mixing inequality (\ref{nestedmix}) with $S'=\bar{S}\cup S_{h}$ and $p=h$.
\end{proof}


\subsection{Convex hull description of $\XJd$}\label{sec:XJd}
In \cite{fischer2018matroid}, the authors study the convex hull description of the following set:
\begin{equation}
\Big\{(x,\delta)\in\{0,1\}^{|J|+|I|}:\delta_i=\prod_{j\in S_i}x_j~~\text{ for }i\in I,~~x\in P_{\mathcal{M}} \Big\}\label{McC_set}
\end{equation}
where $\{S_i\}_{i\in I}$ is a family of nested subsets of a given set $J$ and $P_{\mathcal{M}}$ is the convex hull of incidence vectors associated with independent sets  $\mathcal{U}$ of the matroid ${\mathcal{M}}=(J,\mathcal{U})$ defined on the ground set $J$. 
Note that if we let $\mathcal{U}$ be the set of all subsets of $J$ with cardinality at most  $k$ for some $k\in\Z_+$, the constraint $x\in P_{\mathcal{M}}$ simply becomes $\sum_{j\in J}x_j\le k$.
Consequently, using this  matroid in \eqref{McC_set} leads to a set very similar to the one we have been studying. 
More precisely, taking $k=n-l$ to define the independent sets and replacing $x_j$ with $(1-z_j)$ for $j\in J$,  gives the set $\XJd$. 
Note that due to the complementation of the $x$ variables in \eqref{McC_set}, the upper bound on the sum of the $x$ variables becomes a lower bound on the sum of the $z$ variables. 

Using the particular matroid described above, we next translate the results from \cite{fischer2018matroid} to our context. 
Remember that we use $S_0=\emptyset$ and  $S_{m+1}=J$ for convenience. 
\begin{thm}[Fischer, Fischer and McCormick \cite{fischer2018matroid}]\label{thm:ln}
	Inequalities (\ref{mc3})-(\ref{mc8}) together with 
	\begin{eqnarray}
	\sum_{j\in S'}z_j~\ge~(|S'\cup S_p|-n+l)\delta_p+\sum_{i=p+1}^m(|S'\cup S_i|-|S'\cup S_{i-1}|)\delta_i,\label{nestedmix2}
	\end{eqnarray}
	for all $p\in I$ and  $S'\subset J$ that satisfy $|S'\cup S_{p-1}|\leq n-l<|S'\cup S_p|$ give a complete description of $\conv(\XJd)$.
\end{thm}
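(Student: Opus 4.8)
The plan is to obtain the description as a translation of the Fischer--Fischer--McCormick theorem for \eqref{McC_set}, specialized to the uniform matroid, through the affine change of variables $x_j = 1-z_j$. I would first fix $\mathcal{M}=(J,\mathcal{U})$ to be the uniform matroid whose independent sets are the subsets of $J$ of size at most $k:=n-l$, so that $x\in P_{\mathcal{M}}$ reads $\sum_{j\in J}x_j\le k$. Under $x_j=1-z_j$ one has $\prod_{j\in S_i}x_j=\prod_{j\in S_i}(1-z_j)=\delta_i$ and $\sum_{j\in J}x_j\le k \Leftrightarrow \sum_{j\in J}z_j\ge l$, so the set \eqref{McC_set} for this matroid is precisely the image of $\XJd$ under the affine bijection $(z,\delta)\mapsto(\one-z,\delta)$. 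Because an affine bijection sends convex hulls to convex hulls and facets to facets, it suffices to translate a complete inequality description of the FFM set, inequality by inequality, into the $z$-space.

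The trivial part is bookkeeping. The matroid and standard-linearization constraints of the FFM description map, under $x_j=1-z_j$, onto an equivalent system, which one checks is exactly \eqref{mc3}--\eqref{mc8}: the bound $\sum_j x_j\le k$ becomes the lower bound in \eqref{mc3} (the upper bound $\sum_j z_j\le n=u$ being vacuous), while the product, $2$-link, and box constraints become \eqref{mc1}, \eqref{mc2}, and \eqref{mc4}--\eqref{mc8}.

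For the nontrivial inequalities I would both identify and directly verify their image. The governing base inequality is $\sum_{j\in S'}z_j\ge(|S'\cup S_i|-n+l)\,\delta_i$, valid for every $i$: when $\delta_i=1$ every $j\in S_i$ has $z_j=0$, so $\sum_{j\in S'}z_j=\sum_{j\in S'\setminus S_i}z_j$, and the lower bound $\sum_{j\in J}z_j\ge l$ together with $z_j\le1$ off $S'\cup S_i$ forces $\sum_{j\in S'\setminus S_i}z_j\ge l-(n-|S'\cup S_i|)=|S'\cup S_i|-n+l$. Writing $a_i:=|S'\cup S_i|-n+l$, the sequence $(a_i)$ is nondecreasing, and the defining window $|S'\cup S_{p-1}|\le n-l<|S'\cup S_p|$ is exactly $a_{p-1}\le0<a_p$. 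Applying the mixing procedure of \cite{gunluk2001mixing} to the base inequalities for $i=p,\ldots,m$, with $\sum_{j\in S'}z_j$ as the continuous term and the chain $\delta_p\ge\cdots\ge\delta_m$ of binaries, telescopes to \eqref{nestedmix2}; validity is confirmed by the one-line check that at any feasible point, if $q:=\max\{i:\delta_i=1\}\ge p$ then the right-hand side of \eqref{nestedmix2} collapses to $a_q$, which is matched by the base inequality at $i=q$. This both shows \eqref{nestedmix2} is valid and exhibits it as the $z$-space image of the FFM mixing inequalities.

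The main obstacle I anticipate is the exact matching in this last step: confirming that the FFM family, specialized to the uniform matroid and complemented, reproduces \eqref{nestedmix2} with no facet dropped and none added --- in particular that the coefficients come out as $|S'\cup S_i|$ (the union, reflecting the lower bound) rather than the $|S'\setminus S_i|$ of the upper-bound inequalities \eqref{nestedmix}, and that the index window aligns as $a_{p-1}\le0<a_p$. Once the two families are shown to coincide under $x_j=1-z_j$, completeness for $\conv(\XJd)$ follows immediately from the FFM theorem and the affine invariance of convex hulls, so no separate facet enumeration (as in the proof of Theorem \ref{thm:0u}) is required.
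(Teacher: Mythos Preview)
Your proposal is correct and follows essentially the same route as the paper: the theorem is not proved from scratch here but is obtained by specializing the Fischer--Fischer--McCormick result for \eqref{McC_set} to the uniform matroid of rank $n-l$ and pushing it through the affine bijection $x_j=1-z_j$, exactly as you outline. The paper likewise presents the mixing derivation of \eqref{nestedmix2} only as a validity check and an analogy with \eqref{nestedmix}, not as an independent proof of completeness; your anticipated ``main obstacle'' of matching the FFM family under complementation is precisely the bookkeeping the paper alludes to when it says it is translating their results to the present context.
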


Notice that similar to  inequalities (\ref{nestedmix}), inequalities (\ref{nestedmix2}) above are also defined for subsets of $J$ and both (\ref{nestedmix}) and (\ref{nestedmix2})  have the term $\sum_{j\in S'}z_j$ as well as a telescopic sum involving the $\delta$ variables. 
We next show that (\ref{nestedmix2}) can also be derived using the mixing procedure.
Let $S'\subset J$  be fixed and let $M>n$ be a given constant. 
For any $i\in I$, 
the following (base) inequality is valid for $\XJd$:
\begin{align*}
\frac{1}{M}\sum_{j\in S'}z_j+(1-\delta_i)&\geq\frac{1}{M}\Big(\sum_{j\in S'\cup S_i}z_j-\sum_{j\in S_i\setminus S'}z_j\Big)+\frac{|S_i\setminus S'|}{M}(1-\delta_i)\\
&=\frac{1}{M}\Big(\underbrace{\sum_{j\in J}z_j}_{\geq l}-\underbrace{\sum_{j\in J\setminus(S'\cup S_i)}z_j}_{\leq |J\setminus(S'\cup S_i)|}\Big)+\frac{1}{M}\sum_{j\in S_i\setminus S'}(\underbrace{1-z_j-\delta_i}_{\geq 0})\\
&\geq \frac{1}{M}(l-(n-|S'\cup S_i|))+0=\frac{|S'\cup S_i|-n+l}{M}.
\end{align*}
When $|S'\cup S_p|\geq n-l+1$, the right-hand side of the inequality is strictly between 0 and 1, and  treating the term $\frac1M({\sum_{j\in S'}z_j})$  as a nonnegative continuous variable and  $(1-\delta_i)$ as an integer variable, we can apply the type I mixing procedure to the base inequalities for $i=p,p+1,\ldots,m$  to obtain \begin{displaymath}
\frac{1}{M}\sum_{j\in S'}z_j+\frac{|S'\cup S_p|-n+l}{M}(1-\delta_p)+\sum_{i=p+1}^m\Big(\frac{|S'\cup S_i|-|S'\cup S_{i-1}|}{M}\Big)(1-\delta_i)~\ge~ \frac{|S'\cup S_m|-n+l}{M},
\end{displaymath}
which can be rewritten as 
\begin{displaymath}
\frac{1}{M}\sum_{j\in S'}z_j~-~\frac{|S'\cup S_p|-n+l}{M}\delta_p~-~\sum_{i=p+1}^m\Big(\frac{|S'\cup S_i|-|S'\cup S_{i-1}|}{M}\Big)\delta_i~\geq~ 0.
\end{displaymath}
Multiplying both sides by $M$ and rearranging the terms gives inequality (\ref{nestedmix2}).
In Section \ref{sec:facets} we will discuss the conditions under which these inequalities are facet-defining for $\XJd$ and in Section \ref{general_mixsect} we will generalize these inequalities to the case when the sets in $\cS$ are not necessarily nested.

We next present our main result:
 \begin{thm}\label{thm:bang}
 		Let  $\cS=\{S_i\}_{i\in I}$ be a family of nested sets and assume that $u \geq 2$ and $l \leq n - |S_{|I|}|$.		
 		Then, $\conv(\XJDd)$ is defined by 
 		\begin{enumerate}[(i)]\setlength{\itemsep}{0pt}\setlength{\parskip}{0pt}
 		\item inequalities (\ref{mc3})-(\ref{mc8}),~
 		\item inequalities \eqref{nestedmix}  for all $p\in I$ and $S'\subseteq J$ such that  $|S'\setminus S_p|\leq u-1$, and,
 		\item inequalities \eqref{nestedmix2} for all $p\in I$ and  $S'\subset J$ that satisfy $|S'\cup S_{p-1}|\leq n-l<|S'\cup S_p|$.\\[-1cm]
 		\end{enumerate}
 	
 	Moreover, given a point $(z,\delta)\not\in\conv(\XJDd)$,  a (most) violated inequality can be found in polynomial time.
 	
\end{thm}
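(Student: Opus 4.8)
The proof splits into two parts: the polyhedral description and the separation algorithm. For the description, the starting point is Theorem~\ref{thm:decomp}, which under the hypotheses $u\geq 2$ and $l\leq n-|S_m|$ (which guarantee $\DJDd=\Delta^{0,n}$) gives $\conv(\XJDd)=\conv(\XJD)\cap\conv(\XJd)$. Thus it suffices to intersect the two descriptions we already have: Theorem~\ref{thm:0u} says $\conv(\XJD)$ is cut out by (\ref{mc3})--(\ref{mc8}) together with the mixing inequalities \eqref{nestedmix} for all valid $(p,S')$, and Theorem~\ref{thm:ln} (the translated Fischer--Fischer--McCormick result) says $\conv(\XJd)$ is cut out by (\ref{mc3})--(\ref{mc8}) together with the mixing inequalities \eqref{nestedmix2} for all valid $(p,S')$. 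The union of these two constraint systems is exactly the list (i)--(iii) in the theorem, so $\conv(\XJD)\cap\conv(\XJd)$ equals the polyhedron they define, and the description follows. I would state this explicitly and note that no new inequalities are needed: everything reduces to the two already-established convex hull characterizations plus Theorem~\ref{thm:decomp}.

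\textbf{Separation.} For the algorithmic claim, given $(\bar z,\bar\delta)$, I would first check membership in $\conv(\XJD)$ and $\conv(\XJd)$ separately; if the point lies in both, it lies in $\conv(\XJDd)$ and we are done. The finitely many inequalities (i) are checked by inspection. For the exponential family \eqref{nestedmix}, observe that for a fixed $p\in I$ the left-hand side, as a function of the set $S'$, decomposes additively over $j\in J$ once we fix $|S'\setminus S_p|=:q$ (with $0\leq q\leq u-1$): the $\delta$-terms depend on $S'$ only through the cardinalities $|S'\setminus S_i|$ for $i\geq p$, and the contribution of each $j$ to $\sum_{j\in S'}z_j$ and to each $|S'\setminus S_i|$ depends only on which ``layer'' $S_{i+1}\setminus S_i$ the index $j$ belongs to. So the most violated \eqref{nestedmix} for fixed $p$ is found by, for each choice of how many elements of $S'$ lie inside $S_p$ and how many lie in each layer outside, greedily picking the largest $\bar z_j$ values in each layer; enumerating over $p$ and over the $O(nm)$ cardinality profiles gives a polynomial algorithm. (Alternatively one can set this up as an explicit small LP or a min-cost-flow problem.) The family \eqref{nestedmix2} is handled symmetrically, using $|S'\cup S_i|$ in place of $|S'\setminus S_i|$ and the complementary greedy choice. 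Taking the most violated inequality over all families yields the claim.

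\textbf{Main obstacle.} The description part is essentially bookkeeping given Theorems~\ref{thm:decomp}, \ref{thm:0u}, and \ref{thm:ln}, so the real work is the separation routine: one must argue carefully that optimizing the violation of \eqref{nestedmix} over all admissible $S'$ (subject to the constraint $|S'\setminus S_p|\leq u-1$, which couples the layer choices) reduces to a polynomial-size combinatorial optimization. The subtlety is that the coefficient of $\delta_i$ in \eqref{nestedmix} is a difference of cardinalities $|S'\setminus S_{i-1}|-|S'\setminus S_i|$, i.e.\ the number of elements of $S'$ in layer $S_i\setminus S_{i-1}$, so the objective is genuinely a function of the layer-by-layer counts and not just of the total; I would make the layered/greedy structure precise (for each layer, once the count is fixed, pick the top-$k$ coordinates by $\bar z$ value), and bound the running time by $O(\mathrm{poly}(n,m))$ via enumeration over $p$ and the vector of layer counts, or more cleanly by reducing to a single linear/flow program. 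Everything else is routine.
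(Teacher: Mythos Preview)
Your derivation of the convex-hull description is exactly the paper's: invoke Theorem~\ref{thm:decomp} (using $u\geq 2$, $l\leq n-|S_m|$ to get $\DJDd=\Delta^{0,n}$) and then intersect the descriptions from Theorems~\ref{thm:0u} and~\ref{thm:ln}.

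For the separation part you have the right structural observation---the contribution of each $j\in S'$ to the left-hand side of \eqref{nestedmix} depends only on which layer $S_i\setminus S_{i-1}$ contains $j$---but you stop short of the simplification the paper makes, and your complexity accounting is off. The paper rewrites \eqref{nestedmix}, for fixed $p$, as
\[
u\delta_p\;+\;\sum_{j\in S'}\pi_j\;\leq\;u,
\qquad
\pi_j=\begin{cases}
\hat z_j,& j\in S_p,\\
\hat z_j+\hat\delta_i-\hat\delta_p,& j\in S_i\setminus S_{i-1},\ i>p,\\
\hat z_j-\hat\delta_p,& j\in J\setminus S_m,
\end{cases}
\]
so the most violated inequality is found by a \emph{single} greedy pass (pick every $j\in S_p$ with $\pi_j>0$, and the $u-1$ largest positive $\pi_j$ over $J\setminus S_p$), one per $p\in I$. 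No enumeration over per-layer counts is needed. Your proposal to enumerate ``cardinality profiles'' (a count for each layer) and then greedily fill each layer would also be correct, but the number of such profiles is a product over layers, not $O(nm)$; as stated, that bound is wrong. Your parenthetical LP/flow alternative would fix this, and indeed the paper uses an analogous TU-LP argument for the symmetric family \eqref{nestedmix2}. So the gap is only in the running-time claim for your profile-enumeration scheme; push the layer-separability one step further to the explicit $\pi_j$ form and the issue disappears.
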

\begin{proof}
	Combining Theorems \ref{thm:decomp}, \ref{thm:0u} and \ref{thm:ln} it follows that $\conv(\XJDd)$ is given by inequalities (\ref{mc3})-(\ref{mc8}) together with inequalities \eqref{nestedmix}  and  \eqref{nestedmix2}.
	
	For the second part of the proof, note that there are a polynomial number of inequalities of the form  (\ref{mc3})-(\ref{mc8}) and there are an exponential number of mixing inequalities as one can write one for each $S'\subset J$ and $p\in I$. 
	However, for each $S'\subseteq J$ and fixed $p\in I$ we can rewrite inequality (\ref{nestedmix}) as\begin{equation}\label{mixreform}
	\sum_{j\in S'\cap S_p}z_j+u\delta_p+\sum_{i=p+1}^m\sum_{j\in S'\cap(S_i\setminus S_{i-1})}(z_j+\delta_i-\delta_p)+\sum_{j\in S'\setminus S_m}(z_j-\delta_p)\leq u.
	\end{equation}
	Given a fractional solution $(\hat{z},\hat{\delta})$, let 
	$$\pi_j=\left\{\begin{array}{lll}	
			\hat z_j, 	& \text{for } j\in S_p,\\
			\hat z_j+\hat\delta_i-\hat\delta_p,	&\text{for } j\in S_i\setminus S_{i-1},~i=p+1,\ldots,m,\\ 	
			\hat z_j-\hat\delta_p,~~~~~~&\text{for } j\in J\setminus S_m.	\end{array}	\right.$$
	Then the left-hand side of \eqref{nestedmix} is maximized by
	\begin{align*}
	S_p^*= \arg\max_{Q\subseteq J}\Big[\sum_{j\in Q}\pi_j\::\: |Q\setminus S_p|\leq u-1\Big]
	\end{align*}
	which can be computed greedily by selecting $j\in J$ with the largest positive $\pi_j$ values while satisfying the cardinality constraint.
	Therefore, to separate from inequalities \eqref{nestedmix}, one  only needs to check $S'=S_p^*$ for all $p\in I$. Similarly, inequalities \eqref{nestedmix2} can be rewritten as\begin{equation}\label{mixreform2}
	-\sum_{S'\cap S_p}z_j+(|S_p|-n+l)\delta_p+\sum_{i=p+1}^m(|S_i\setminus S_{i-1}|)\delta_i+\sum_{i=p+1}^m\sum_{j\in S'\cap(S_i\setminus S_{i-1})}(\delta_p-\delta_i-z_j)+\sum_{j\in S'\setminus S_m}(\delta_p-z_j)\leq 0.
	\end{equation}
	Given a fraction solution $(\hat{z},\hat{\delta})$, we now define,
	\begin{equation}
	\sigma_j=
	\left\{\begin{array}{lll} - \hat z_j, &\text{for } j\in S_p,\\
	  \hat\delta_p-\hat\delta_i-\hat z_j,~~~ &\text{for } j\in S_i\setminus S_{i-1}, ~i=p+1,\ldots,m,\\
	 \hat \delta_p-\hat z_j, &\text{for }j\in J\setminus S_m.
		\end{array}	\right.
	\end{equation}
	Then the left-hand side of \eqref{mixreform2} is maximized by
	\begin{align*}
	S_p^{**}= \arg\max_{Q\subseteq J}\Big\{\sum_{j\in Q}\sigma_j: |Q\cup S_{p-1}|\leq n-l<|Q\cup S_p|\Big\},
	\end{align*}
which can again be computed greedily by ordering the indices $j\in J$ according to the $\sigma_j$ values.	
Alternatively, one can solve the LP $$\max \Big\{\sum_{j\in J}\sigma_jx_j : \sum_{j\in J\setminus S_{p-1} }x_j \leq n-l-|S_{p-1}|,
~~ \sum_{j\in J\setminus S_{p} }x_j \geq n-l+1-|S_p|,~~{\bf1}\ge x\ge{\bf0}\Big\},$$
which has a totally unimodular constraint matrix.
Consequently, one only needs to check $S'=S^{**}_p$ for all $p\in I$ to separate from inequalities \eqref{nestedmix2}.
\end{proof}

\begin{rmk} \label{remark}
		For the sake of completeness, we now consider the case when  $\cS$ is nested but $u\ge 2$ or $l\le n-|S_{|I|}|$ does not hold. 
		If $u=0$, then $\XJDd$ and its convex hull contains a single point. If $u=1$, then $\delta_i=1-\sum_{j\in S_i}z_j$ for all $i\in I$. These equations, together with
		$\sum_{j\in J}z_j\leq1$, and $1\ge z_j\ge0$ for $j\in J$ 
		give the convex hull description of $\XJDd$. For the case when $l>n-|S_{|I|}|$, consider $\cS'=\{S_i\}_{i\in I:|S_i|\leq n-l}$. In this case we have $\delta_i=0$ for all $i$ with $|S_i|> n-l$, and the multilinear set associated with $\cS'$ falls into the discussion of Theorem \ref{thm:bang}.
\end{rmk}

In \cite{fischer2018matroid}, the authors show the separation of inequalities \eqref{nestedmix2} can be solved in polynomial time by solving a submodular minimization problem.

\newcommand{\inlineitem}[1][]{%
	\ifnum\enit@type=\tw@
	{\descriptionlabel{#1}}
	\hspace{\labelsep}%
	\else
	\ifnum\enit@type=\z@
	\refstepcounter{\@listctr}\fi
	\quad\@itemlabel\hspace{\labelsep}%
	\fi}

\section{Properties of facet-defining inequalities for the nested case}\label{sec:facets}
So far we have presented an inequality description of $\XJDd$ for the nested case using the description of $\conv(\XJD)$ developed in Section \ref{nested_mixsect} and the  description of $\conv(\XJd)$ presented earlier in \cite{fischer2018matroid}.
Not all inequalities in these exponential-size  descriptions are facet-defining and in this section we present  necessary  and sufficient conditions for inequality of the form (\ref{nestedmix}) or (\ref{nestedmix2}) to be facet-defining.
\begin{thm}\label{UBfacetthm1}
	Let $\cS$ be nested and let  $p\in I$ and $S'\subseteq J$ be such that $|S'\setminus S_p|\leq u-1$. 
	Then, without loss of generality, the following conditions are necessary for the associated inequality (\ref{nestedmix}) to define a facet of $\conv(\XJDd)$:
	\begin{enumerate}[~~~~~U1.]\setlength{\itemsep}{0pt}\setlength{\parskip}{0pt} 
		\item $S'\supseteq S_p$,\qquad\label{U1}
		\item $|S'\setminus S_{p-1}|\geq u$ if $p\geq 2$,\label{U3}
	 	\item $|S'|\geq u+1$.\label{U4}
	\end{enumerate}
\end{thm}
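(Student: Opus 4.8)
The plan is to prove the contrapositive of each condition: if a necessary condition fails, then inequality \eqref{nestedmix} is either invalid, dominated by another valid inequality, or implied by the inequalities \eqref{mc3}–\eqref{mc8} together with other mixing inequalities, hence cannot be facet-defining. Throughout I will use the reformulation \eqref{mixreform} of \eqref{nestedmix}, namely $\sum_{j\in S'\cap S_p}z_j+u\delta_p+\sum_{i=p+1}^m\sum_{j\in S'\cap(S_i\setminus S_{i-1})}(z_j+\delta_i-\delta_p)+\sum_{j\in S'\setminus S_m}(z_j-\delta_p)\le u$, which makes the role of $S'$ transparent: each $j\in S'$ contributes a term depending only on which ``layer'' $S_i\setminus S_{i-1}$ it lies in.

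For condition \textbf{U1} ($S'\supseteq S_p$): I would argue that replacing $S'$ by $S'\cup S_p$ yields a valid inequality of the same form that dominates the original one, or that equality on the facet forces membership of $S_p$. Concretely, since every $j\in S_p$ contributes the term $z_j$ with coefficient $1$ in \eqref{mixreform} and $z_j\ge 0$, adding indices of $S_p\setminus S'$ to $S'$ only strengthens the left-hand side while the right-hand side $u$ is unchanged and the hypothesis $|S'\setminus S_p|\le u-1$ is preserved; so \eqref{nestedmix} for $S'$ is implied by \eqref{nestedmix} for $S'\cup S_p$ together with $z_j\ge0$. For condition \textbf{U3} ($|S'\setminus S_{p-1}|\ge u$ when $p\ge2$): here the point is that if $|S'\setminus S_{p-1}|\le u-1$, then inequality \eqref{nestedmix} written with the index $p-1$ in place of $p$ is also valid (the hypothesis of validity is exactly $|S'\setminus S_{p-1}|\le u-1$), and one checks by comparing the two telescopic expressions that the $(p-1)$-version dominates the $p$-version — the coefficient of $\delta_{p-1}$ becomes $u-|S'\setminus S_{p-1}|\ge 1>0$ in the stronger inequality while the coefficients of $\delta_p,\dots,\delta_m$ match up after collapsing one telescoping term. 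For condition \textbf{U4} ($|S'|\ge u+1$): if $|S'|\le u$, I would show \eqref{nestedmix} is implied by the standard linearization; indeed, when $|S'|\le u$ one can bound each $z_j+\delta_i-\delta_p$ and $z_j-\delta_p$ crudely using \eqref{mc1}, \eqref{mc4}, \eqref{mc5} and conclude the left side is at most $|S'|\le u$ without ever using \eqref{xmc0}'s upper bound tightly — so the inequality is a nonnegative combination of \eqref{mc1}–\eqref{mc8} and \eqref{mc4}–\eqref{mc5}, hence not facet-defining (using Lemma \ref{dimlem}, which gives uniqueness of facet representations).

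The phrase ``without loss of generality'' in the statement signals that some of these conditions are to be read as: \emph{if} \eqref{nestedmix} for $S'$ defines a facet, \emph{then} there is a possibly different $S''$ with \eqref{nestedmix} for $S''$ defining the same facet and satisfying U1–U3; so for U1 and U3 the right formulation is that the facet is also represented by the dominating inequality, which I would make precise by showing the two inequalities have the same zero set on $\conv(\XJDd)$ — this requires exhibiting enough affinely independent feasible points tight for both, reusing the point families $v^U$ from Definition \ref{def:pts} as in the proof of Lemma \ref{dimlem}.

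The main obstacle I anticipate is \textbf{U3}: establishing domination is easy algebraically, but turning ``dominated'' into ``defines the same facet, hence WLOG we may assume U3'' requires care, because a priori the $(p-1)$-inequality could define a strictly larger face. I expect to handle this by showing that whenever $|S'\setminus S_{p-1}|\le u-1$ the $p$-inequality is in fact a \emph{strict} consequence (a proper nonnegative combination involving \eqref{mc4} for index $p-1$ with positive weight), so it is not facet-defining at all — which is cleaner than the ``same facet'' phrasing and still justifies the reduction. A secondary technical point is bookkeeping in the telescoping sums when $p=m$ or when $S'\setminus S_m=\emptyset$, which I would dispatch with the conventions $S_0=\emptyset$, $S_{m+1}=J$, $\delta_0=1$, $\delta_{m+1}=0$ already fixed in the paper.
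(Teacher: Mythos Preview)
Your proposal is correct and follows essentially the same three-part strategy as the paper: for U\ref{U1} the paper replaces $S'$ by $S'\cup S_p$ and invokes $z_j\ge 0$; for U\ref{U3} it replaces $p$ by $p-1$ and invokes $\delta_p\le\delta_{p-1}$ (your telescoping computation is exactly what underlies this one-line claim); for U\ref{U4} it shows the inequality is a nonnegative combination of \eqref{mc1}, \eqref{mc8} and $\delta_p\le 1$. Two small remarks: first, for U\ref{U4} the paper uses the rearrangement $\sum_{j\in S'\cap S_p}(z_j+\delta_p)+\sum_{i>p}\sum_{j\in S'\cap(S_i\setminus S_{i-1})}(z_j+\delta_i)+\sum_{j\in S'\setminus S_m}z_j+(u-|S'|)\delta_p$, which avoids any need for \eqref{mc5}; second, your worry about ``same facet versus strictly dominated'' is unnecessary here, since full-dimensionality (Lemma~\ref{dimlem}) gives uniqueness of the facet representation, so a proper nonnegative combination of distinct valid inequalities is never facet-defining---the argument you sketch in your final paragraph is exactly right and disposes of the ``WLOG'' issue cleanly for all three conditions.
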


\begin{proof}
	%
	
	If condition U\ref{U1} is not satisfied, then replacing $S'$ with $S'\cup S_p$  in inequality (\ref{nestedmix}) leads to a stronger inequality as $z_j\ge 0$ for all $j\in J$. 
	Similarly, if condition U\ref{U3} is not satisfied,  then replacing $p$ with $p-1$  in inequality (\ref{nestedmix}) leads to a stronger inequality as $\delta_p\le\delta_{p-1}.$
	
	If condition U\ref{U4} is not satisfied, then $|S'|\leq u$ and\begin{align*}
	&\hskip-2cm\sum_{j\in S'}z_j+ \Big(u-|S'\setminus S_p|\Big)\delta_p+\sum_{i=p+1}^m\Big(|S'\setminus S_{i-1}|-|S'\setminus S_i|\Big)\delta_i\\
	=&\sum_{j\in S'\cap S_p}(z_j+\delta_p)
	+\sum_{i=p+1}^m\sum_{j\in S'\cap (S_i\setminus S_{i-1})}(z_j+\delta_i)+\sum_{j\in S'\setminus S_m}z_j+\Big(\underbrace{u-|S'|}_{\geq 0}\Big)\delta_p\\
	\leq &~~|S'\cap S_p|+\sum_{i=p+1}^m|S'\cap (S_i\setminus S_{i-1})|+|S'\setminus S_m|+(u-|S'|)~=~u,
	\end{align*}
	where the last inequality is implied by the fact that $z_j\le1$ for all $j\in J$ and $z_j+\delta_i\leq 1$ for all $j\in S_i$, $i\in I$. 
	Therefore, if condition U\ref{U4} is not satisfied, then inequality (\ref{nestedmix}) is implied by other valid inequalities. 
	As $\conv(\XJDd)$ is full-dimensional, we conclude that conditions U\ref{U1}-U\ref{U4} are necessary for inequality (\ref{nestedmix}) to define a facet.
\end{proof}

\begin{thm}\label{UBfacetthm2}
	Let $\cS$ be nested and let  $p\in I$ and $S'\subseteq J$ be such that $|S'\setminus S_p|\leq u-1$. 
	If $p<m$ or $|S_m|<n-l$, then conditions U\ref{U1}-U\ref{U4} together with\begin{enumerate}[~~~~~U4.]\setlength{\itemsep}{0pt}\setlength{\parskip}{0pt}
		\item $S'\cap (S_{p+1}\setminus S_p)\neq\emptyset$ if $p\leq m-1$
	\end{enumerate}
	are sufficient for inequality (\ref{nestedmix}) to define a facet of $\conv(\XJDd)$.
\end{thm}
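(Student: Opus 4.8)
The plan is to prove that the face $F=\{(z,\delta)\in\conv(\XJDd):\text{inequality (\ref{nestedmix}) is tight}\}$ has dimension $n+m-1$. Since $\conv(\XJDd)$ is full-dimensional (Lemma \ref{dimlem}), this is equivalent to showing that every linear equation $\alpha^Tz+\beta^T\delta=\gamma$ satisfied by all points of $F$ is a scalar multiple of (\ref{nestedmix}). I would establish this by producing, for each level $t\in\{0,\ldots,m\}$, explicit integer points $v^U\in F$ with $\delta^U=\mydelta^{[t]}$, and then pinning down $(\alpha,\beta,\gamma)$ coordinate by coordinate through exchange arguments between such points.

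First I would classify the points of $F$. For $U\subseteq J$ with $l\le|U|\le u$ and $\delta^U=\mydelta^{[t]}$ (so that $U\cap S_t=\emptyset$), a direct computation from the reformulation (\ref{mixreform}) shows that the left-hand side of (\ref{nestedmix}) at $v^U$ equals $|S'\cap U|$ when $t<p$, and equals $u-|S'\setminus S_t|+|S'\cap U|$ when $t\ge p$. Hence $v^U\in F$ iff $|S'\cap U|=u$ (for $t<p$) or $U\supseteq S'\setminus S_t$ (for $t\ge p$). I would then check that such a $U$ exists at every level: conditions U\ref{U1} and U\ref{U4} give a level-$0$ point (a $u$-subset of $S'$ meeting $S_1$, using $S_1\subseteq S_p\subseteq S'$ and $|S'|\ge u+1$); condition U\ref{U3} supplies the levels $1\le t<p$; the linking condition $S'\cap(S_{p+1}\setminus S_p)\ne\emptyset$ together with U\ref{U1} gives level $p$; and for $p<t\le m$ one pads $S'\setminus S_t$ with one filler element of $S_{t+1}\setminus S_t$ and, if needed, elements of $J\setminus(S'\cup S_t)$, the bound $|S'\setminus S_t|\le|S'\setminus S_p|\le u-1$ guaranteeing the necessary room.

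With these points in hand I would determine $(\alpha,\beta,\gamma)$ as follows. Every level-$0$ point has $\delta=0$ and $z=\mathbbm{1}_U$ for a $u$-subset $U\subseteq S'$, so $\sum_{j\in U}\alpha_j=\gamma$ for all such $U$; swapping one element of $U$ for another inside $S'$ forces $\alpha_j$ to be a common constant $\lambda$ over $S'$ and $\gamma=\lambda u$. Next, for each $j\notin S'$ I would exhibit a tight point $v^U$ at some level $t\ge p$ with $j\notin U$, $j\notin S_t$ and $|U|<u$, and then add $j$ to $U$: since $j\notin S_t$ this changes neither $\delta$ nor the tightness of (\ref{nestedmix}), so $v^U$ and $v^{U\cup\{j\}}$ both lie in $F$ and differ only in $z_j$, giving $\alpha_j=0$. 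Thus $\alpha=\lambda\,\mathbbm{1}_{S'}$, i.e.\ $\lambda$ times the $z$-part of (\ref{nestedmix}). Finally, since $\alpha^Tz^U+\beta^T\mydelta^{[t]}=\gamma$ holds at a tight point for each level $t$, and the $m+1$ vectors $\mydelta^{[t]}$ are affinely independent (equivalently $A_\cS$ is nonsingular), the $\beta_i$ are uniquely determined; as (\ref{nestedmix}) itself satisfies these same equations, $\beta$ must coincide with $\lambda$ times the $\delta$-coefficient vector of (\ref{nestedmix}), so $(\alpha,\beta,\gamma)$ is a multiple of (\ref{nestedmix}).

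The main obstacle, and the place where the hypotheses are genuinely used, is the cardinality bookkeeping that keeps every constructed point inside $\XJDd$, i.e.\ simultaneously honoring $l\le\sum_j z_j\le u$ and the prescribed value of $\delta$. In particular, the toggle step establishing $\alpha_j=0$ for $j\notin S'$ requires a tight point containing an element outside $S'$ together with enough slack to add or delete it. When $S'\ne J$ this slack is available at level $p$ whenever $p<m$, and at level $m$ whenever $|S_m|<n-l$ (since then $|J\setminus S_m|>l$), but it disappears precisely when $p=m$ and $|S_m|=n-l$, for then the unique level-$m$ point is $U=J\setminus S_m$ and no element of $J\setminus S'$ can be toggled. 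This is exactly why the theorem assumes $p<m$ or $|S_m|<n-l$. I expect verifying the existence of level-$t$ points and the admissibility of each exchange, especially near the boundary cases $t=p$ and $t=m$, to be the most delicate part of the argument.
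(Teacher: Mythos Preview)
Your approach is correct and genuinely different from the paper's. The paper proceeds by explicitly exhibiting $n+p$ affinely independent tight points in the slice $\{\delta_i=0:i>p\}$ (verified by a block-matrix rank computation), and then sequentially lifts $\delta_{p+1},\ldots,\delta_m$ by producing one further tight point at each higher level. You instead use the equation method: classify the tight points level by level, then determine $(\alpha,\beta,\gamma)$ via exchange arguments, invoking the affine independence of $\{\mydelta^{[t]}\}$ only at the end to pin down $\beta$. Your route avoids all the explicit matrix manipulation in the Appendix and makes transparent exactly where each hypothesis is consumed (U\ref{U1}, U\ref{U3}, U\ref{U4} for the existence of level-$t$ points with $t\le p$; the linking condition U4 for level $p$; and the dichotomy $p<m$ or $|S_m|<n-l$ for the toggle showing $\alpha_j=0$ off $S'$). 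The paper's route, in exchange, is completely constructive: it names $n+m$ specific points on the facet.

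One small point you should make explicit when writing it out: the swap argument at level $0$ showing $\alpha_j$ is constant on $S'$ must preserve $\delta=\mydelta^{[0]}$, i.e.\ the swapped set must still meet $S_1$. This is not automatic for an arbitrary swap inside $S'$, but since $k_1\ge 2$ you can anchor two elements of $S_1$ in $U$ and swap around them (and finally swap one anchor against the other's witness). This is the only place your sketch is slightly too brisk; the remaining bookkeeping you flag as ``delicate'' (padding to reach $l$, ensuring $|U|\le u$, keeping $U\cap(S_{t+1}\setminus S_t)\ne\emptyset$) all goes through exactly as you indicate.
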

\begin{proof}(sketch)   
		Assume  that $S'\subseteq J$ and $p\in I$ satisfy the conditions above.
	As $S'\supseteq S_p\supseteq S_1$, we can assume $S'=\{s_1,s_2,s_3,s_4,\ldots,s_{|S'|} \}$ where $s_1=1$, $s_2=2$ and $2<s_3<s_4<\ldots<s_{|S'|}$. We first show that the inequality\begin{equation}\label{lift0a}
	\sum_{j\in S'}z_j+(u-|S'\setminus S_p|)\delta_p\leq u
	\end{equation}
	defines an $(n+p-1)$-dimensional face of $\conv(\XJDd)\cap\{(z,\delta):\delta_i=0,i\in\{p+1,\ldots,m\} \}$. Let $Q=\{s_{|S'|-u+1},\ldots,s_{|S'|}\}$. Then $Q$ satisfies $|Q|=u\geq 2$ and $S'\setminus S_p\subset Q\subseteq S'\setminus S_{p-1}$. Note that $p<m$ or $|S_m|<n-l$ implies that $|J\setminus S_p|\geq l+1$. Let $R$ be a set satisfying $S'\setminus S_p\subseteq R\subseteq J\setminus S_p$ and $|R|=\max\{l+1,|S'\setminus S_p|\}\in[l+1,u]$. Define\begin{displaymath}
	R'=\left\{\begin{array}{ll}
	R, & \text{if }l+1\leq|S'\setminus S_p|(\leq u-1)\text{, i.e., }R=S'\setminus S_p,\\
	R\setminus\{j_0\}\text{ for some fixed }j_0\in R\setminus S', & \text{if }l+1>|S'\setminus S_p|\text{, i.e., }R\setminus S'\neq \emptyset.
	\end{array}
	\right.
	\end{displaymath}
	Then $R'$ satisfies $R'\supseteq S'\setminus S_p$ and $l\leq|R'|\leq u-1$. Let $T$ be a set satisfying $|T|=u-1$ and $T\subseteq S'\setminus S_{p-1}$.
	
	Consider points (using Definition \ref{def:pts}) associated with the sets 
	$$	Q,~~(Q\cup\{1\})\setminus\{j\} ~~\text{for}~ j\in Q,~~(Q\setminus\{s_{|S'|-1},s_{|S'|} \})\cup\{1,j\} ~~\text{for}~ j\in S'\setminus Q\setminus\{1\},$$
	$$ R,~~~R\setminus\{j\}~~\text{for}~ j\in R\setminus S',~~~R'\cup\{j\} ~~\text{for}~j\in J\setminus R\setminus S_p,$$
	$$ (Q\setminus\{s_{|S'|} \})\cup\{2\}~~\text{if}~p\geq 2,~~~T\cup\{k_i\} ~~\text{for}~ {i\in\{2,\ldots,p-1 \}}.$$
%
 Note that some of the index sets used for defining the sets above can be empty, in which case the associated points are not considered. These $(n+p)$ points are feasible and satisfy $\delta_i=0$ for $i\in\{p+1,\ldots,m\}$ (as $(S'\setminus S_p)\cap S_{p+1}\neq\emptyset$ by assumption U4), and lie on the hyperplane\begin{displaymath}
	\sum_{j\in S'}z_j+(u-|S'\setminus S_p|)\delta_p=u.
	\end{displaymath}
	
	In the rest of the proof (presented in Appendix), we first show that these points are affinely independent and therefore inequality (\ref{lift0a}) defines an $(n+p-1)$-dimensional face of $\conv(\XJDd)\cap\{(z,\delta):\delta_i=0,i\in\{p+1,\ldots,m\} \}$.
	We then lift the coefficients of $\delta_{p+1},\ldots,\delta_{m}$ to conclude that inequality (\ref{nestedmix}) is facet-defining.
	
	\end{proof}
\begin{rmk}
	For the case when $p=m$ and $|S_m|=n-l$, inequality (\ref{nestedmix}) is facet-defining for $\conv(\XJDd)$ if and only if $S'=J$.
\end{rmk}

\if 0
\begin{proof}[Original proof]*** I did not read this proof carefully -- Oktay ****
	Assume  that $S'\subseteq J$ and $p\in I$ satisfy the conditions above. 
	We apply an approach similar to sequential lifting \cite{padberg1975note} to show that $(\ref{nestedmix})$ defines a facet of $\conv(\XJD)$. We first show that the inequality\begin{displaymath}
	\sum_{j\in S'}z_j+(u-|S'\setminus S_p|)\delta_p\leq u
	\end{displaymath}
	defines an $(n+p-1)$-dimensional face of $\conv(\XJD)\cap\{(z,\delta):\delta_i=0,i\in\{p+1,\ldots,m\} \}$. Let $Q$ be a set satisfying $S'\setminus S_p\subset Q\subseteq S'\setminus S_{p-1}$ with $|Q|=u$ and $1\notin Q$, and let $j_0\in Q\setminus (S'\setminus S_p)$, $j_1\in Q\setminus\{j_0\}$. Consider the points (using Definition \ref{def:pts}) associated with the sets $\{(S'\setminus S_p)\cup\{j\} \}_{j\in J\setminus S'}$, $S'\setminus S_p $, $Q$, $\{(Q\setminus\{j\})\cup\{1\} \}_{j\in Q}$, $\{(Q\setminus\{j_0\})\cup\{j\} \}_{j\in S'\setminus Q\setminus\{1\}}$ and $\{(Q\setminus\{j_0,j_1\}) \cup\{1,k_i\} \}_{i\in \{1,\ldots,p-1\}}$. These $(n+p)$ affinely independent points are feasible with $\delta_{i}=0,i\in\{p+1,\ldots,m\}$, and lie on the hyperplane\begin{displaymath}
	\sum_{j\in S'}z_j+(u-|S'\setminus S_p|)\delta_p= u.
	\end{displaymath}
	Therefore, the inequality defines a facet of $\conv(\XJD)\cap\{(z,\delta):\delta_i=0,i\in\{p+1,\ldots,m\} \}$.
	
	Define $S_{m+1}=J$ and $\delta_{m+1}=0$. Based on the validity of (\ref{nestedmix}), for each $m'\in\{p,p+1,\ldots,m-1 \}$\begin{align*}
	|S'\setminus S_{m'}|-|S'\setminus S_{m'+1}|\leq u-\max\Big\{&\sum_{j\in S'}z_j+(u-|S'\setminus S_p|)\delta_p+\\
	&\sum_{i=p+1}^{m'}(|S'\setminus S_{i-1}|-|S'\setminus S_i|)\delta_i:(z,\delta)\in\XJD,\delta_{m'+1}=1,\delta_{m'+2}=0\Big\}.
	\end{align*}
	Actually the equality holds by taking $(z,\delta)$ as the point of $Q_{m'}$, where $Q_{m'}$ is defined as \begin{displaymath}
	Q_{m'}=\left\{\begin{array}{ll}
	S'\setminus S_{m'+1} & \text{if }S'\cap (S_{m'+2}\setminus S_{m'+1})\neq \emptyset\text{ or }m'=m-1\\
	S'\setminus S_{m'+1}\cup \{j_{m'}\} & \text{otherwise}
	\end{array}
	\right.,
	\end{displaymath}
	and $j_{m'}\in S_{m'+2}\setminus S_{m'+1}$.
	
	In total, we find $(n+m)$ affinely independent points lying on the hyperplane\begin{displaymath}
	\sum_{j\in S'}z_j+(u-|S'\setminus S_p|)\delta_p+\sum_{i=p+1}^{m}(|S'\cap S_i\setminus S_{i-1}|)\delta_i=u.
	\end{displaymath}
	Therefore, inequality (\ref{nestedmix}) is facet-defining.
\end{proof}
\fi

Results similar to Theorem \ref{UBfacetthm1} and \ref{UBfacetthm2} hold for valid inequalities (\ref{nestedmix2}) of the polytope $\conv(\XJd)$. 
In \cite{fischer2018matroid}, the conditions that $|S'\cup S_{p-1}|\leq n-l$ and $n-l+1\le|S'\cup S_p|$ are implicitly imposed on the choice of $p$ based on the rank function associated with the matroid. 
We next present a stronger  characterization of the necessary conditions for these inequalities to be facet-defining.
\begin{thm}\label{Thm:nestedmix2-facet}
	Let $\cS$ be nested and let $p\in I$ and $S'\subset J$ be such that $|S'\cup S_{p-1}|\leq n-l<|S'\cup S_p|$. 
	Then the following conditions are necessary for inequality (\ref{nestedmix2}) to define a facet of $\conv(\XJDd)$:
	\begin{enumerate}[~~~~~L1.]\setlength{\itemsep}{0pt}\setlength{\parskip}{0pt}
		\item $S'\cap S_p=\emptyset$;\label{L1}
		\item $|S'|\leq n-l-1$.\label{L4}
	\end{enumerate}
\end{thm}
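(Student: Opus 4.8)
The plan is to establish necessity of each condition by the same device used in Theorem \ref{UBfacetthm1}: if a condition fails, I exhibit a valid inequality that dominates (\ref{nestedmix2}), and since $\conv(\XJDd)$ is full-dimensional (Lemma \ref{dimlem}), a dominated inequality cannot be facet-defining. Recall that (\ref{nestedmix2}) is a ``$\ge$'' inequality, so ``stronger'' here means a larger right-hand side (or, equivalently after moving terms, a tighter cut), and the relevant monotonicities are $z_j \ge 0$ and $\delta_p \le \delta_{p-1}$ together with $\delta_i \le \delta_{i-1}$ along the chain.

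For condition L\ref{L1}, suppose $S' \cap S_p \neq \emptyset$, and set $\tilde S' = S' \setminus S_p$. Because the sets are nested, $\tilde S' \cup S_i = S' \cup S_i$ for every $i \ge p$, so all the coefficients $|S'\cup S_i| - |S'\cup S_{i-1}|$ for $i > p$ and $|S'\cup S_p| - (n-l)$ are unchanged when $S'$ is replaced by $\tilde S'$; moreover the index $p$ is still admissible since $|\tilde S' \cup S_{p-1}| = |S'\cup S_{p-1}| \le n-l < |S'\cup S_p| = |\tilde S'\cup S_p|$. Thus the inequality for $\tilde S'$ has the same right-hand side but a smaller left-hand side $\sum_{j\in \tilde S'} z_j \le \sum_{j\in S'} z_j$ (as $z\ge 0$), hence it dominates (\ref{nestedmix2}). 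I would also check that $\tilde S' \neq \emptyset$: indeed $|\tilde S' \cup S_p| = |S'\cup S_p| > n-l \ge |S_p|$ forces $\tilde S' \setminus S_p \neq \emptyset$, so the dominating inequality is not vacuous.

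For condition L\ref{L4}, suppose $|S'| \ge n-l$. Here I expect the argument to mirror Case-style bookkeeping: I would show that when $|S'|$ is this large, inequality (\ref{nestedmix2}) is implied by the standard linearization constraints (\ref{mc1})--(\ref{mc8}) (in particular $z_j + \delta_i \le 1$, $z_j \ge 0$, $1-\delta_1-\sum_{j\in S_1}z_j\le0$, and $\sum_{j\in J}z_j\ge l$) combined with the telescopic $\delta$-inequalities (\ref{mc4})--(\ref{mc5}), just as U\ref{U4} was the ``$|S'|\le u$'' borderline in Theorem \ref{UBfacetthm1}. Concretely I would rewrite the left-hand side of (\ref{nestedmix2}) by grouping $j\in S'$ according to which ``layer'' $S_i\setminus S_{i-1}$ it lies in (and the leftover $S'\setminus S_m$), pair each group with the corresponding $\delta$-term, bound each paired expression using $z_j \ge 1 - \delta_i$ for $j \in S_i$ (a consequence of (\ref{mc1}) read the right way, together with (\ref{mc4})), and finally use $\sum_{j\in J} z_j \ge l$ to absorb the constant. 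The surplus that makes the bound go through is exactly $|S'| - (n-l) \ge 0$ together with the $l+1$ versus $l$ slack; when $|S'| \le n-l-1$ this surplus vanishes and the argument breaks, which is why the condition is needed.

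The main obstacle is the bookkeeping in L\ref{L4}: getting the coefficients of the telescopic decomposition to match up cleanly so that every grouped term is nonnegative and the constant terms sum to at most $0$ on the correct side. I would organize this exactly as in Case 1 / Case 2b of the proof of Theorem \ref{thm:0u}, using underbraces to display each nonnegative (or $\le 0$) block, and taking care that the cases $p = 1$ (where the $1-\delta_1-\sum_{j\in S_1}z_j\le 0$ inequality enters) and $p \ge 2$ are both covered. Conditions L\ref{L1} and L\ref{L4} being stated ``without loss of generality'' (as in Theorem \ref{UBfacetthm1}) presumably refers to the freedom to first apply the L\ref{L1} reduction $S' \mapsto S'\setminus S_p$ before checking L\ref{L4}, so I would present L\ref{L1} first and then assume $S'\cap S_p=\emptyset$ throughout the L\ref{L4} argument.
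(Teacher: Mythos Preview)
Your treatment of L\ref{L1} matches the paper's: replace $S'$ by $S'\setminus S_p$ and observe the right-hand side of (\ref{nestedmix2}) is unchanged while the left-hand side can only shrink. One small slip: you claim $|\tilde S'\cup S_{p-1}| = |S'\cup S_{p-1}|$, but if $S'\cap(S_p\setminus S_{p-1})\neq\emptyset$ this fails; fortunately you only need $|\tilde S'\cup S_{p-1}|\le |S'\cup S_{p-1}|\le n-l$, which does hold.

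Your plan for L\ref{L4} has a genuine gap. The inequality ``$z_j\ge 1-\delta_i$ for $j\in S_i$'' that you propose to use is \emph{not} valid---(\ref{mc1}) says $z_j+\delta_i\le 1$, i.e.\ $z_j\le 1-\delta_i$, the reverse direction. More fundamentally, your grouping is backwards: the telescoping coefficient $|S'\cup S_i|-|S'\cup S_{i-1}|$ equals $|(S_i\setminus S_{i-1})\setminus S'|$, which counts elements of layer $i$ that are \emph{not} in $S'$. So the right pairing is $\delta_i$ with $z_j$ for $j\in (S_i\setminus S_{i-1})\setminus S'$, not for $j\in S'$. With that pairing the correct bound $\delta_i\le 1-z_j$ (this \emph{is} (\ref{mc1})) applies, and one also uses $\delta_p\le 1$ on the leftover coefficient $|S'|-(n-l)\ge 0$. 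The resulting $-\sum_{j\in S_m\setminus S'}z_j$ terms combine with the existing $-\sum_{j\in S'}z_j$ to give $-\sum_{j\in S'\cup S_m}z_j$, and then $\sum_{j\in J}z_j\ge l$ together with $z_j\le 1$ for $j\notin S'\cup S_m$ closes the argument. Neither (\ref{mc2}) nor (\ref{mc5}) is needed. Also, the paper does \emph{not} assume L\ref{L1} when proving L\ref{L4}; the theorem statement here has no ``without loss of generality'' clause, so you should not rely on first reducing to $S'\cap S_p=\emptyset$.
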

\begin{proof}
	If condition L\ref{L1} is not satisfied, then replacing $S'$ with $S'\setminus S_p$  in inequality (\ref{nestedmix2}) leads to a stronger inequality as $z_j\ge 0$ for all $j\in J$. 
	If condition L\ref{L4} is not satisfied, then $|S'|\geq n-l$. By valid inequalities (\ref{mc1}), (\ref{mc8}), $\delta_p\leq 1$ and $\sum_{j\in J}z_j\geq l$,\begin{align*}
	&\hskip -2.5cm(|S'\cup S_p|-n+l)\delta_p+\sum_{i=p+1}^m\big(|S'\cup S_i|-|S'\cup S_{i-1}|\big)\delta_i -\sum_{j\in S'}z_j\\
	=&~~\big(\underbrace{|S'|-(n-l)}_{\geq 0}\big)\delta_p+(|S_p\setminus S'|)\delta_p+\sum_{i=p+1}^m\big(|S_i\setminus S_{i-1}\setminus S'|\big)\delta_i-\sum_{j\in S'}z_j\\
	\leq&~~|S'|-(n-l)+\sum_{j\in S_p\setminus S'}(1-z_j)+\sum_{i=p+1}^m\sum_{j\in S_i\setminus S_{i-1}\setminus S'}(1-z_j) -\sum_{j\in S'}z_j\\
	=&~~|S'\cup S_m|-(n-l)-\sum_{j\in S'\cup S_m}z_j\\
	=&\sum_{j\in J\setminus (S'\cup S_m)}z_j+|S'\cup S_m|-(n-l)-\sum_{j\in J}z_j\\
	\leq&~~|J\setminus (S'\cup S_m)|+|S'\cup S_m|-(n-l)-l\\
	=&~~ 0,
	\end{align*}
	where the first inequality is implied by the fact that $\delta_p\le1$ and $z_j+\delta_i\leq 1$, for all $j\in S_i$, $i\in I$ and the second inequality is implied by the fact that  $\sum_{j\in J}z_j\geq l$, $z_j\le1$ for all $j\in J$.
	Therefore, if condition L\ref{L4} is not satisfied, then inequality (\ref{nestedmix2}) is implied by other valid inequalities. 
	As $\conv(\XJDd)$ is full-dimensional, we conclude that conditions L\ref{L1} and L\ref{L4} are necessary for inequality (\ref{nestedmix2}) to define a facet.
	%
\end{proof}
In \cite[Proposition 23]{fischer2018matroid}, the authors describe three conditions for inequality (\ref{nestedmix2}) to be facet-defining for $\conv(\XJd)$. These conditions involve the rank function of the underlying matroid which, when translated to our context, has rank function 
$$r(S)=\min\{|S|,n-l\}$$ for each subset $S$ of the ground set $J$.  
More precisely, these conditions are
\begin{enumerate}[~~~~~C1.]
	\item \label{C1} Inequality $\sum_{j\in S'}x_j\leq r(S')$ is facet-defining for the set $\conv\{x\in\{0,1\}^{|J|}:\sum_{j\in J}x_j\leq n-l \}$;
	\item \label{C2} Set $S'$ is closed \cite[Definition 4]{fischer2018matroid} and non-separable \cite[Definition 22]{fischer2018matroid}, meaning
	\begin{enumerate}[~~~~~C\ref{C2}a.] 
		\item \label{C2a} $r(S')<r(S'\cup\{j\})$ for all $j\in J\setminus S'$,
		\item \label{C2b} $r(S')<r(S^a)+r(S^b)$ for all nonempty $S^a\subset S'$ and $S^b=S'\setminus S^a$;
	\end{enumerate}
	\item \label{C3}  For all $i\in I$, $\delta_i$ has a strictly positive coefficient in  (\ref{nestedmix2}), i.e.,
	$${\rm C\ref{C3}a.}~p=1,~~{\rm C\ref{C3}b.}~|S'\cup S_1|>n-l~\text{ and }~~{\rm C\ref{C3}c.}~ |S'\cup S_{i-1}|<|S'\cup S_i|~~ \text{for all }i\in\{2,\ldots,m\}.$$
\end{enumerate}
Notice that conditions C\ref{C2a} and C\ref{C2b} cannot hold simultaneously unless $S'$ is equal to the set $J$, or it contains a single element, i.e., $S'=\{j\}$ for some $j\in J$.
Also note that condition C\ref{C1} is satisfied in both cases, i.e. when $S'=J$ or $|S'|=1$.
However, remember that Theorem \ref{Thm:nestedmix2-facet} requires $S'\subset J$ and therefore $S'\not=J$.
Therefore, the only remaining possible choices for $S'$ are  $S'=\{j\}$ for some $j\in J$. 
Finally, condition C\ref{C3}b together with our starting assumption that $|S_m|\leq n-l$ implies that $m=p=1$, $|S_1|=n-l$ and $j\in J\setminus S_1$. 
In conclusion, we observe that conditions C\ref{C1}-C\ref{C3} are  satisfied only in the narrow case when the family $\cS$ defining  $\conv(\XJd)$ contains  a single set $S$ of cardinality $n-l$. In addition,  the set $S'$ must have cardinality one, containing a single element $j\in J\setminus S$.

In the next theorem, we give significantly less restrictive  conditions for inequality (\ref{nestedmix2}) to be facet-defining for $\conv(\XJDd)$.
\begin{thm}\label{Thm:L3}
	Let $\cS$ be nested and let $p\in I$ and $S'\subset J$ be such that $|S'\cup S_{p-1}|\leq n-l<|S'\cup S_p|$. 
	If $p<m$ or $|S_m|<n-l$, then conditions L\ref{L1}-L\ref{L4} together with\begin{enumerate}[~~~~~L3.]\setlength{\itemsep}{0pt}\setlength{\parskip}{0pt}
		\item \label{L3} $S_{p+1}\setminus S_p\nsubseteq S'$ if $p\leq m-1$
	\end{enumerate}
	are sufficient for inequality (\ref{nestedmix2}) to define a facet of $\conv(\XJDd)$.
\end{thm}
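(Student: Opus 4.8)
The plan is to follow the template of Theorem~\ref{UBfacetthm2}. Since $\conv(\XJDd)$ is full-dimensional of dimension $n+m$ by Lemma~\ref{dimlem}, it suffices to produce $n+m$ affinely independent points of $\XJDd$, each of the form $v^U$ of Definition~\ref{def:pts}, that satisfy \eqref{nestedmix2} at equality. I would organize these as in Theorem~\ref{UBfacetthm2}: first build an $(n+p-1)$-dimensional family of tight points lying on the slice $\{\delta_i=0:\ i\in\{p+1,\dots,m\}\}$, on which \eqref{nestedmix2} collapses to the base inequality $\sum_{j\in S'}z_j\ge a\,\delta_p$ with $a:=|S'\cup S_p|-n+l$, and then lift the coefficients of $\delta_{p+1},\dots,\delta_m$ one index at a time. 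Two preliminary observations drive everything. First, the defining hypothesis $n-l<|S'\cup S_p|$ gives $a\ge 1$, and condition L\ref{L1} gives $a=|S'|+|S_p|-n+l$. Second, the nondegeneracy assumption ($p<m$ or $|S_m|<n-l$) forces $|S_p|\le n-l-1$, so that $1\le a\le|S'|-1$ and in particular $|S'|\ge 2$.

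For the base family I would first characterize the tight points with $\delta^U_p=1$. Such a $U$ satisfies $U\cap S_p=\emptyset$ and $|U\cap S'|=a$; writing $|U|=|U\cap S'|+|U\cap(J\setminus(S'\cup S_p))|$ and using the one-line identity $a+|J\setminus(S'\cup S_p)|=l$ (a consequence of L\ref{L1}), together with $l\le|U|\le u$, forces $|U|=l$ and $U\supseteq J\setminus(S'\cup S_p)$. Here condition L\ref{L3} plays its role: $(S_{p+1}\setminus S_p)\setminus S'\neq\emptyset$, and since this set is contained in $J\setminus(S'\cup S_p)\subseteq U$, every such point automatically has $\delta^U_{p+1}=0$ and hence lies on the slice. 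Varying the $a$-element subset $U\cap S'$ (possible precisely because $1\le a\le|S'|-1$) produces $|S'|-1$ affinely independent directions. The complementary family consists of tight points with $\delta^U_p=0$, i.e. $U\cap S'=\emptyset$ and $U\cap S_p\neq\emptyset$; these automatically satisfy $\delta^U_i=0$ for $i\ge p$ and may take any cardinality in $[l,u]$, so they supply the remaining directions and, crucially, a point off $\{\sum_j z_j=l\}$. Condition L\ref{L4} guarantees $|J\setminus S'|\ge l+1$, so supports of admissible size avoiding $S'$ exist. I would list both families explicitly, totalling $n+p$ points, and verify their affine independence.

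To lift, I would exhibit for each $q\in\{p+1,\dots,m\}$ a single tight point $v^U$ with $\delta^U_q=1$ and $\delta^U_i=0$ for $i>q$, namely $U\cap S_q=\emptyset$, $|U\cap S'|=a_q:=|S'\cup S_q|-n+l$, and (when $q<m$) $U\cap(S_{q+1}\setminus S_q)\neq\emptyset$. The same counting identity gives $|U|=l$ and $U\supseteq J\setminus(S'\cup S_q)$, and one checks $|S'\setminus S_q|\ge a_q\ge 1$ using $|S_q|\le|S_m|\le n-l$, so the required support exists; when $S_{q+1}\setminus S_q\nsubseteq S'$ the killing element is free, and otherwise it can be taken among the $a_q$ chosen $S'$-elements since $S'\cap(S_{q+1}\setminus S_q)\neq\emptyset$. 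Such a point certifies that the lifted coefficient of $\delta_q$ equals $|S'\cup S_q|-|S'\cup S_{q-1}|$, matching \eqref{nestedmix2}. Adding these $m-p$ points to the base family yields $n+m$ affinely independent tight points, proving that \eqref{nestedmix2} is facet-defining.

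I expect the main obstacle to be the affine-independence bookkeeping for the $n+p$ base points. Unlike the upper-bound-free matroid setting of \cite{fischer2018matroid}, the two-sided window $l\le\sum_j z_j\le u$ both forces the support outside $S'$ to be nonempty and caps it, so one must choose the $\delta_p=1$ and $\delta_p=0$ supports simultaneously at admissible sizes while keeping the resulting incidence vectors independent; the fact, derived above, that all $\delta_p=1$ tight points are pinned to $|U|=l$ and share every coordinate except the $a$-subset of $S'$ makes this delicate. As in the proof of Theorem~\ref{UBfacetthm2}, I would relegate the explicit point lists and the independence verification to an appendix. The hypothesis that $p<m$ or $|S_m|<n-l$ is used exactly to secure $|S_p|\le n-l-1$, hence $a\le|S'|-1$ and $|S'|\ge 2$, which is what makes the $\delta_p=1$ family rich enough; the excluded degenerate case $p=m$, $|S_m|=n-l$ lies outside the scope of the theorem.
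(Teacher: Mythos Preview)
Your proposal is correct and follows essentially the same approach as the paper: establish an $(n+p-1)$-dimensional face on the slice $\{\delta_i=0:i>p\}$ via $n+p$ explicit tight points, then lift $\delta_{p+1},\dots,\delta_m$ one at a time. Your structural observations---the identity $a+|J\setminus(S'\cup S_p)|=l$ pinning every $\delta_p=1$ tight point to $|U|=l$ and $U\supseteq J\setminus(S'\cup S_p)$, the role of L\ref{L3} in forcing $\delta_{p+1}=0$ there, and the use of $|S_p|\le n-l-1$ to obtain $1\le a\le|S'|-1$---match the paper's construction (its sets $J\setminus Q$, $R$, $T_i$, $L_{m'}$ realize exactly the families you describe), and the paper likewise defers the affine-independence bookkeeping to an appendix.
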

\begin{proof}(sketch) 	
	Assume  that $S'\subseteq J$ and $p\in I$ satisfy the conditions above. Then the assumption $p<m$ or $|S_m|<n-l$ implies that $|S_p|<n-l$, and $S'\neq\emptyset$ as $|S'\cup S_p|> n-l$. Assume $S'=\{s_1,\ldots,s_{|S'|}  \}$ with $s_1<\ldots<s_{|S'|}$. We first show that the inequality\begin{equation}\label{lift1a}
	-\sum_{j\in S'}z_j+(|S'\cup S_p|-n+l)\delta_p\leq 0
	\end{equation}
	defines an $(n+p-1)$-dimensional face of $\conv(\XJDd)\cap\{(z,\delta):\delta_i=0,i\in\{p+1,\ldots,m\} \}$.
	
	Let $Q=S_p\cup\{s_1,s_2,\ldots,s_{n-l-|S_p|} \}$. Then $Q$ satisfies $|Q|=n-l$ and $S_p\subset Q\subset S_p\cup S'$. Let $R=(J\setminus S'\setminus S_p)\cup\underbrace{\{1,2,\ldots,l+1-|J\setminus S'\setminus S_p|\}}_{\subseteq S_p}$. Then $R$ satisfies $|R|=l+1\leq u$ and $(J\setminus S'\setminus S_p)\cup\{1,2\}\subseteq R\subseteq J\setminus S'$ as $n-l<|S'\cup S_p|$. Note that $|S'\cup S_{p-1}|\leq n-l$. For $i\in\{1,\ldots,p-1\}$, we let $T_i$ denote the first $l$ elements of $J\setminus S'\setminus S_i$.
	
	Consider the points (using Definition \ref{def:pts}) associated with the sets
	$$	R\setminus \{j\} ~\text{for}~  {j\in R},~~R,~~(R\setminus\{1\})\cup \{j\} ~\text{for}~{j\in S_p\setminus R},~~	\{T_i\}_{i\in\{1,\ldots,p-1\}},$$
	$$(J\setminus Q\setminus\{s_{|S'|}\})\cup\{j\}  ~\text{for}~{j\in Q\setminus S_p},~~(J\setminus Q\setminus\{j\})\cup\{s_1\}  ~\text{for}~ {j\in S'\setminus Q\setminus\{s_{|S'|} \}},~~J\setminus Q.$$
	
%
%
	
	These $(n+p)$ points are feasible with $\delta_i=0,i\in\{p+1,\ldots,m \}$ (as $(J\setminus S'\setminus S_p)\cap S_{p+1}\neq\emptyset$ by assumption L3), and lie on the hyperplane\begin{displaymath}
	-\sum_{j\in S'}z_j+(|S'\cup S_p|-n+l)\delta_p=0.
	\end{displaymath}

	In the rest of the proof (presented in Appendix), we first show that these points are affinely independent and therefore inequality (\ref{lift1a}) defines an $(n+p-1)$-dimensional face of $\conv(\XJDd)\cap\{(z,\delta):\delta_i=0,i\in\{p+1,\ldots,m\} \}$.
	We then lift the coefficients of $\delta_{p+1},\ldots,\delta_{m}$ to conclude that inequality (\ref{nestedmix2}) is facet-defining.
\end{proof}	

\begin{rmk}
	For the case when $p=m$ and $|S_m|=n-l$, inequality (\ref{nestedmix2}) is facet-defining for $\conv(\XJDd)$ if and only if $S'=\{j\}$ for some $j\in J\setminus S_m$.
\end{rmk}

\section{Valid inequalities when $\cS$ is not nested  }\label{general_mixsect}
In Section \ref{nestedsec}, we described inequalities (\ref{nestedmix}) and (\ref{nestedmix2}) and showed that together with the standard linearization and 2-link inequalities they define $\conv(\XJD)$ and $\conv(\XJd)$, respectively. In this section, we extend these inequalities to the general case when the sets in $\cS$ are not necessarily nested.

Notice that since we derived inequalities (\ref{nestedmix}) using the mixing procedure, they are still valid for $\conv(\XJDd)$ 
in the general case, provided that
\begin{equation}
u-1\geq|S'\setminus S_p|\geq|S'\setminus S_{p+1}|\geq\ldots\geq |S'\setminus S_m|\label{qaz} \end{equation} hold.
We next generalize inequalities (\ref{nestedmix}) to the case when \eqref{qaz} is not satisfied.

\begin{prop}\label{gen1}
	Assume that sets $S_{[1]},S_{[2]},\ldots,S_{[t]}\in\cS$ are distinct and let $\delta_{[i]}$ denote the $\delta$ variable associated with $S_{[i]}$.
	For $S'\subseteq J$, the following inequality is valid for $\conv(\XJDd)$ 
	\begin{equation}\label{generalization}
	\sum_{j\in S'}z_j+\Big(u-|S'\setminus S_{[1]}|\Big)\delta_{[1]}
	~+~\sum_{i=2}^t\Big(\Big|S'\cap S_{[i]}\setminus \bigcup_{k=1}^{i-1}S_{[k]}\Big|\Big)\delta_{[i]}~\leq~ u,
	\end{equation}
	provided that $\max_{i=2,\ldots,t}|S'\setminus(S_{[1]}\cap S_{[i]})|\leq u$.
\end{prop}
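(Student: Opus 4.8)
The plan is to establish validity of \eqref{generalization} by verifying the inequality \eqref{eq:validity} for every $\bar\delta \in \DJDd$, using the characterization that an inequality is valid if and only if $\gamma - \beta^T\bar\delta \geq \max_{z \in \XJDd(\bar\delta)} \alpha^Tz$ holds for all $\bar\delta$. Here $\alpha$ is the incidence vector of $S'$, $\gamma = u$, and $\beta$ is supported on the indices $[1], \ldots, [t]$ with the coefficients displayed in the statement. Since $\alpha \geq 0$, the maximization over $\XJDd(\bar\delta)$ reduces (as in the proof of Lemma~\ref{lem:pattern}) to choosing, among the coordinates $j \in J$ with $z_j$ not forced to zero, the $u$ largest values of $\alpha_j$; because $\alpha$ is $0/1$, this simply counts how many coordinates of $S'$ can be set to $1$ subject to the constraints imposed by $\bar\delta$ together with the cardinality bound $\sum_j z_j \leq u$.

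The key step is to translate, for a fixed $\bar\delta$, the left-hand side of \eqref{generalization} evaluated at an optimal $z$ into a clean combinatorial quantity. First I would fix $\bar\delta$ and let $K = \{i : \bar\delta_{[i]} = 1\}$ record which of the distinguished $\delta$-variables equal $1$; the remaining terms with $\bar\delta_{[i]} = 0$ drop out of $\beta^T\bar\delta$. When $\bar\delta_{[i]} = 1$ we know $z_j = 0$ for all $j \in S_{[i]}$, so the contribution $\sum_{j \in S'} z_j$ is at most $|S' \setminus \bigcup_{i \in K} S_{[i]}|$ and is additionally capped by $u$. The telescoping structure of the coefficients is designed so that, after substituting these forced zeros, the coefficient $u - |S' \setminus S_{[1]}|$ on $\delta_{[1]}$ and the incremental coefficients $|S' \cap S_{[i]} \setminus \bigcup_{k<i} S_{[k]}|$ exactly account for the reduction in the number of available $S'$-coordinates. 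The main case split is on whether $[1] \in K$ (i.e.\ $\bar\delta_{[1]} = 1$) or not, since the coefficient on $\delta_{[1]}$ carries the ``$u - |S'\setminus S_{[1]}|$'' term that behaves differently from the others.

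I expect the principal obstacle to be handling the case $\bar\delta_{[1]} = 0$ while some $\bar\delta_{[i]} = 1$ for $i \geq 2$, and verifying that the proviso $\max_{i \geq 2} |S' \setminus (S_{[1]} \cap S_{[i]})| \leq u$ is exactly what makes the bound go through. In this regime the left-hand side becomes $\sum_{j\in S'} z_j + \sum_{i \in K} |S' \cap S_{[i]} \setminus \bigcup_{k<i} S_{[k]}|$, and since $\bar\delta_{[i]} = 1$ forces $z_j = 0$ on $S_{[i]}$, the active $S'$-coordinates lie in $S' \setminus \bigcup_{i \in K} S_{[i]}$. The telescoping identity $\sum_{i \in K}|S' \cap S_{[i]} \setminus \bigcup_{k<i} S_{[k]}| = |S' \cap \bigcup_{i \in K} S_{[i]}| - (\text{overcounted terms from } [1] \notin K)$ must be combined with $\sum_{j \in S'} z_j \leq |S' \setminus \bigcup_{i\in K} S_{[i]}|$ and the cardinality bound to reach $u$. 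The proviso $|S'\setminus(S_{[1]}\cap S_{[i]})| \leq u$ ensures that even the worst index $i$ cannot force the count to exceed $u$.

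The cleanest route, which I would actually carry out, is to note that \eqref{generalization} can be derived by the same mixing argument used for \eqref{nestedmix}: I would exhibit base inequalities of the form $\tfrac{1}{M}(u - \sum_{j \in S'} z_j) + (1 - \delta_{[i]}) \geq \tfrac{1}{M}(u - |S' \setminus S_{[i]}|)$ for each $i$, each valid because $1 - \delta_{[i]} - z_j \geq 0$ on $S_{[i]}$ and $z_j \leq 1$ elsewhere, and then apply the type~I mixing inequality \eqref{eq:mixing1} after ordering the right-hand sides. The proviso guarantees these right-hand sides lie strictly in $(0,1)$, which is precisely the hypothesis needed for \eqref{eq:mixset} and hence for \eqref{eq:mixing1}; the resulting telescopic sum, after multiplying through by $M$ and simplifying, produces exactly the coefficients in \eqref{generalization}. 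I would prefer this mixing derivation over the case analysis because it reuses the established machinery and makes the role of the proviso transparent, relegating the combinatorial bookkeeping to the identity $|S' \setminus S_{[i-1]}| - |S' \setminus S_{[i]}|$ collapsing into the stated incremental intersection sizes.
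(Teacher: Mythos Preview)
Your preferred route---the mixing derivation---has a genuine gap. The type~I mixing inequality applied to the base inequalities $\tfrac{1}{M}(u-\sum_{j\in S'}z_j)+(1-\delta_{[i]})\geq \tfrac{1}{M}(u-|S'\setminus S_{[i]}|)$ produces, after the telescoping, coefficients of the form $|S'\setminus S_{[\pi(i-1)]}|-|S'\setminus S_{[\pi(i)]}|$ for the ordering $\pi$ that sorts the right-hand sides. Your final sentence asserts that these ``collapse into the stated incremental intersection sizes'' $|S'\cap S_{[i]}\setminus\bigcup_{k<i}S_{[k]}|$, but that identity holds only when the $S_{[i]}$ are nested (so that $\bigcup_{k<i}S_{[k]}=S_{[i-1]}$). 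For a concrete failure, take $S_{[1]}=\{1,2,3\}$, $S_{[2]}=\{3,4\}$, $S'=\{1,\dots,5\}$, $u=4$: the proviso holds since $|S'\setminus(S_{[1]}\cap S_{[2]})|=4\le u$, the proposition gives $\sum_{j\in S'}z_j+2\delta_{[1]}+\delta_{[2]}\le 4$, but mixing the two base inequalities gives only $\sum_{j\in S'}z_j+\delta_{[1]}+\delta_{[2]}\le 4$. Indeed, the paper remarks immediately before the proposition that mixing yields \eqref{nestedmix} in the non-nested setting only under the extra monotonicity hypothesis \eqref{qaz}, and Proposition~\ref{gen1} is explicitly stated as a generalization beyond what mixing delivers.

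The paper instead proves validity by \emph{sequential lifting}: it first checks the $t=1$ inequality $\sum_{j\in S'}z_j+(u-|S'\setminus S_{[1]}|)\delta_{[1]}\le u$ directly, and then, assuming the inequality with $t$ replaced by $t'$ is valid, shows that on the slice $\delta_{[t'+1]}=1$ the left-hand side is at most $u-|S'\cap S_{[t'+1]}\setminus\bigcup_{k\le t'}S_{[k]}|$. The proviso $|S'\setminus(S_{[1]}\cap S_{[t'+1]})|\le u$ is used precisely here, to guarantee that the coefficient $u-|S'\setminus S_{[1]}|$ remains controllable after splitting $S'\cap S_{[1]}$ according to whether indices also lie in $S_{[t'+1]}$. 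Your pointwise-verification sketch could in principle be made to work, but as written it does not identify this lifting structure, and the ``telescoping identity'' you allude to in the $\bar\delta_{[1]}=0$ case is left unspecified (and does not hold without the nestedness you are implicitly assuming).
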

\def \mmm {t}
\begin{proof}
	As the indices of the sets in $\cS$ are arbitrary, we assume that $S_{[i]} = S_i$ for $i=1,\ldots,t$,  without loss of generality.
	First note that the following inequality 
	\begin{equation}\label{qwer}
	\sum_{j\in S'}z_j+(u-|S'\setminus S_1|)\delta_1\leq u
	\end{equation}
	is valid for  $\conv(\XJDd)$ as it is implied by $\sum_{j\in S'}z_j\leq u$	when $\delta_1=0$ and $\sum_{j\in S'}z_j\leq |S'\setminus S_1|$ when $\delta_1=1$ (and therefore $z_j=0$ for all $j\in S_1$).
	We will derive inequality (\ref{generalization}) by sequential lifting, starting with inequality \eqref{qwer}
	and showing that if (\ref{generalization}) with $\mmm$ replaced by $\mmm'$ is valid for $\XJDd$
	for $t'\in\{1,2,\ldots,\mmm-1\}$, then 
	\begin{equation}\label{UBlift}
	\sum_{j\in S'}z_j+\Big(u-|S'\setminus S_1|\Big)\delta_1+\sum_{i=2}^{\mmm'}\Big(\Big|S'\cap S_i\setminus\bigcup_{k=1}^{i-1}S_k\Big|\Big)\delta_i\leq u-\Big|S'\cap S_{\mmm'+1}\setminus \bigcup_{k=1}^{\mmm'}S_k\Big|
	\end{equation}
	holds for all $(z,\delta)\in \XJDd$ with $\delta_{\mmm'+1}=1$. 
	This would imply that (\ref{generalization}) with $\mmm$ replaced by $\mmm'+1$ is also valid.
	Fix  $\mmm'\in\{1,\ldots,\mmm-1\}$ and note that for all $(z,\delta)\in\XJDd$ with  $\delta_{\mmm'+1}=1$, we have  $z_j=0$ for  $j\in S_{\mmm'+1}$. Therefore, given any arbitrary $(z,\delta)\in\XJDd$ with $\delta_{\mmm'+1}=1$, we have\begin{align*}
	&\hskip -1cm 
	\sum_{j\in S'}z_j~+~(u-|S'\setminus S_1|)\delta_1~+~\sum_{i=2}^{\mmm'}\Big(\Big|S'\cap S_i\setminus\bigcup_{k=1}^{i-1}S_k\Big|\Big)\delta_i\\
	=&\underbrace{\sum_{j\in S'\cap S_{\mmm'+1}}z_j}_{=\,0}~+~\sum_{j\in S'\setminus (\bigcup_{k=1}^{\mmm'+1}S_k)}z_j
	~+~\sum_{j\in S'\cap S_1\setminus S_{\mmm'+1}}(z_j+\delta_1)
	~+~(\underbrace{u-|S'\setminus (S_1\cap S_{\mmm'+1})|}_{\geq\, 0\text{ by assumption}})\delta_1\\
	&\hskip1cm ~+~\sum_{i=2}^{\mmm'}\Big[\sum_{j\in S'\cap S_i\setminus (\bigcup_{k=1}^{i-1}S_k)\setminus S_{\mmm'+1}}(z_j+\delta_i)~ +~ \Big(\Big|S'\cap S_i\cap S_{\mmm'+1}\setminus\bigcup_{k=1}^{i-1}S_k\Big|\Big)\delta_i\Big]\\
	\leq&~~\Big|S'\setminus \bigcup_{k=1}^{\mmm'+1}S_k\Big|
	~+~u-|S'\setminus S_1|
	~+~\sum_{i=2}^{\mmm'}\Big|S'\cap S_i\setminus \bigcup_{k=1}^{i-1}S_k\Big|	\\
	=&~~\Big|S'\setminus \bigcup_{k=1}^{\mmm'+1}S_k\Big|~+~u-\Big|S'\setminus \bigcup_{i=1}^{\mmm'}S_i\Big|~~=~~~u-\Big|S'\cap S_{\mmm'+1}\setminus \bigcup_{k=1}^{\mmm'}S_k\Big|.
	\end{align*}
\end{proof}
We note that inequality (\ref{generalization}) reduces to (\ref{nestedmix}) when $\cS$ is nested by taking ${\mmm}=m-p+1$ and $S_{[i]}=S_{p+i-1}$ for $i=1,\ldots,{\mmm}$.

Similarly, as we have shown that inequalities (\ref{nestedmix2}) can also be derived via mixing, they are valid in the general case as long as $n-l+1\leq|S'\cup S_p|\leq |S'\cup S_{p+1}|\leq\ldots\leq |S'\cup S_m|$. We next extend (\ref{nestedmix2}) to a more general case.
\begin{prop}
	Assume that sets $S_{[1]},S_{[2]},\ldots,S_{[\mmm]}\in\cS$ are distinct and let $\delta_{[i]}$ denote the $\delta$ variable associated with $S_{[i]}$.
	For $S'\subseteq J$, the following inequality is valid for $\conv(\XJDd)$ 
	\begin{equation}\label{generalization2}
	-\sum_{j\in S'}z_j+\Big(|S'\cup S_{[1]}|-n+l\Big)\delta_{[1]}~+~\sum_{i=2}^\mmm\Big(\Big|S_{[i]}\setminus (\bigcup_{k=1}^{i-1}S_{[k]})\setminus S'\Big|\Big)\delta_{[i]}~\leq~ 0,
	\end{equation}
	provided that $\min_{i=2,\ldots,\mmm}|S'\cup(S_{[1]}\cap S_{[i]})|\geq n-l$.
\end{prop}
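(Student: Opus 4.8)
The plan is to mirror the sequential-lifting argument used for Proposition~\ref{gen1}, adapted to this ``union/lower-bound'' version of the inequality. As there, I would assume without loss of generality that $S_{[i]}=S_i$ for $i=1,\ldots,t$, and abbreviate $U_i=\bigcup_{k=1}^{i-1}S_k$ (with $U_1=\emptyset$) and $W_i=S_i\setminus U_i\setminus S'$, so the coefficient of $\delta_i$ in \eqref{generalization2} is $|W_i|\ge 0$ for $i\ge 2$. The starting ($t=1$) inequality is
\[
-\sum_{j\in S'}z_j+\big(|S'\cup S_1|-n+l\big)\delta_1\le 0,
\]
which I would verify directly by splitting on $\delta_1$: if $\delta_1=0$ it is just $-\sum_{j\in S'}z_j\le0$, and if $\delta_1=1$ then $z_j=0$ on $S_1$, so the cardinality bound $\sum_{j\in J}z_j\ge l$ gives $\sum_{j\in S'}z_j\ge l-|J\setminus(S'\cup S_1)|=|S'\cup S_1|-n+l$.

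For the inductive step, suppose \eqref{generalization2} with $t$ replaced by $t'$ is valid on $\XJDd$ for some $t'\in\{1,\ldots,t-1\}$. Because the coefficient $|W_{t'+1}|$ of the newly lifted variable is nonnegative, validity of the lifted inequality is automatic at points with $\delta_{t'+1}=0$; it therefore suffices to treat points with $\delta_{t'+1}=1$, i.e. $z_j=0$ for all $j\in S_{t'+1}$. Setting $V=\bigcup_{k=1}^{t'+1}S_k$ and $B=V\setminus(S'\cup S_1)=\bigcup_{i=2}^{t'+1}W_i$ (a disjoint union), I would first use \eqref{mc1} in the form $\delta_i\le 1-z_j$ for $j\in W_i\subseteq S_i$ to convert every $\delta$-term of index $\ge2$,
\[
\sum_{i=2}^{t'+1}|W_i|\delta_i\le\sum_{j\in B}(1-z_j)=|B|-\sum_{j\in B}z_j,
\]
and then combine the $z$-sums (using $S'\cap B=\emptyset$). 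The inequality to be shown collapses to
\[
\sum_{j\in S'\cup B}z_j\ \ge\ |B|+\big(|S'\cup S_1|-n+l\big)\delta_1 .
\]

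The remaining verification of this bound is where the hypothesis must enter, and it is the main obstacle: the crude estimate $\sum_{j\in S'\cup B}z_j\ge|S'\cup B|-n+l$ is too weak, so the zeros \emph{forced} by $\delta_{t'+1}=1$ have to be counted explicitly. The key identity is $S_{t'+1}\setminus(S'\cup B)=(S_1\cap S_{t'+1})\setminus S'$, so these $\big|(S_1\cap S_{t'+1})\setminus S'\big|$ indices lie in $J\setminus(S'\cup B)$ and carry $z=0$; removing them from the count of available ones improves the estimate to
\[
\sum_{j\in S'\cup B}z_j\ \ge\ |S'\cup B|-n+l+\big|(S_1\cap S_{t'+1})\setminus S'\big|.
\]
When $\delta_1=0$ the target reduces to $\sum_{j\in S'\cup B}z_j\ge|B|$, and since $|S'\cup B|=|S'|+|B|$ this follows exactly from $|S'|+\big|(S_1\cap S_{t'+1})\setminus S'\big|=|S'\cup(S_1\cap S_{t'+1})|\ge n-l$, which is precisely the assumption for $i=t'+1$. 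When $\delta_1=1$ one has the stronger fact that $z=0$ on all of $S_1\setminus S'\subseteq J\setminus(S'\cup B)$, and replacing $\big|(S_1\cap S_{t'+1})\setminus S'\big|$ by $|S_1\setminus S'|$ in the count yields the target with equality. This completes the lifting step and hence the induction; specializing $t=m-p+1$ and $S_{[i]}=S_{p+i-1}$ then recovers \eqref{nestedmix2}, exactly as \eqref{generalization} specialized to \eqref{nestedmix}.
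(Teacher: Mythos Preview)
Your proof is correct and follows the same sequential-lifting strategy as the paper: verify the $t=1$ base case by splitting on $\delta_1$, then lift one $\delta$-variable at a time by checking the inequality at points with $\delta_{t'+1}=1$. The only cosmetic difference is in how you bound the $\delta_1$-term during the lifting step: the paper decomposes the coefficient as $|S'\cup S_1|-n+l=\big(|S'\cup(S_1\cap S_{t'+1})|-n+l\big)+|S_1\setminus S'\setminus S_{t'+1}|$ and applies $\delta_1\le 1$ to the first (nonnegative by hypothesis) part and $\delta_1\le 1-z_j$ to the second, thereby avoiding a case split, whereas you split on $\delta_1\in\{0,1\}$ and invoke the hypothesis only in the $\delta_1=0$ branch. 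Both routes produce the same bound.
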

\begin{proof}
	Without loss of generality, we assume that $S_{[i]}=S_i$ for $i=1,\ldots,\mmm$. Note that the following inequality\begin{equation}\label{asdf}
	-\sum_{j\in S'}z_j+(|S'\cup S_{1}|-n+l)\delta_{1}\leq 0
	\end{equation}
	is valid for $\conv(\XJDd)$ as it is implied by $z_j\geq 0$ for all $j\in S_1$ when $\delta_1=0$ and $|S'\cup S_1|-\sum_{j\in S'}z_j=\sum_{j\in S'\cup S_1}(1-z_j)\leq n-l$ when $\delta_1=1$ (and therefore $z_j=0$ for all $j\in S_1$). We will derive (\ref{generalization2}) by sequential lifting, starting with inequality (\ref{asdf}) and showing that if (\ref{generalization2}) with $\mmm$ replaced by $\mmm'$ is valid for $\XJDd$ for $\mmm'\in\{1,2,\ldots,\mmm-1\}$, then\begin{equation}
	-\sum_{j\in S'}z_j+\Big(|S'\cup S_{1}|-n+l\Big)\delta_{1}~+~\sum_{i=2}^{\mmm'}\Big(\Big|S_{i}\setminus (\bigcup_{k=1}^{i-1}S_{k})\setminus S'\Big|\Big)\delta_{i}~\leq~ -\Big|S_{\mmm'+1}\setminus(\bigcup_{k=1}^{\mmm'}S_k)\setminus S' \Big|
	\end{equation}
	holds for all $(z,\delta)\in\XJDd$ with $\delta_{\mmm'+1}=1$. This would imply that (\ref{generalization2}) with $\mmm$ replaced by $\mmm'+1$ is also valid.
	Fix $\mmm'\in\{1,\ldots,\mmm-1 \}$ and note that for all $(z,\delta)\in \XJDd$ with $\delta_{\mmm'+1}=1$, we have $z_j=0$ for $j\in S_{\mmm'+1}$. Therefore, given any arbitrary $(z,\delta)\in\XJDd$ with $\delta_{\mmm'+1}=1$, we have\begin{align*}
	&\hskip -1cm
	-\sum_{j\in S'}z_j+\Big(|S'\cup S_{1}|-n+l\Big)\delta_{1}~+~\sum_{i=2}^{\mmm'}\Big(\Big|S_{i}\setminus (\bigcup_{k=1}^{i-1}S_{k})\setminus S'\Big|\Big)\delta_{i}\\
	=&\underbrace{\sum_{j\in S_{\mmm'+1}\setminus S'}z_j}_{=0}-\sum_{j\in S'\cup S_{\mmm'+1}}z_j+\Big[\Big(\underbrace{|S'\cup(S_1\cap S_{\mmm'+1})|-n+l}_{\geq 0\text{ by assumption}}\Big)\delta_1+(|S_1\setminus S'\setminus S_{\mmm'+1}|)\delta_1\Big]\\
	&+\sum_{i=2}^{\mmm'}\Big[\Big(\Big|S_i\cap S_{\mmm'+1}\setminus(\bigcup_{k=1}^{i-1}S_k)\setminus S'\Big|\Big)\delta_i+\Big(\Big|S_i\setminus(\bigcup_{k=1}^{i-1}S_k)\setminus S'\setminus S_{\mmm'+1}\Big|\Big)\delta_i\Big]\\
	\leq&-\sum_{j\in S'\cup S_{\mmm'+1}}z_j+\Big[\Big(|S'\cup(S_1\cap S_{\mmm'+1})|-n+l\Big)+\sum_{j\in S_1\setminus S'\setminus S_{\mmm'+1}}(1-z_j)\Big]\\
	&+\sum_{i=2}^{\mmm'}\Big[\Big(\Big|S_i\cap S_{\mmm'+1}\setminus(\bigcup_{k=1}^{i-1}S_k)\setminus S'\Big|\Big)+\sum_{j\in S_i\setminus(\bigcup_{k=1}^{i-1}S_k)\setminus S'\setminus S_{\mmm'+1}}(1-z_j)\Big]\\
	=&-\sum_{j\in S'\cup(\bigcup_{i=1}^{\mmm'+1}S_{i})}z_j+\Big|S'\cup(\bigcup_{i=1}^{\mmm'}S_{i})\Big|-n+l\\
	=&-\sum_{j\in J}z_j+\sum_{j\in J\setminus[S'\cup(\bigcup_{i=1}^{\mmm'+1}S_{i})]}z_j+\Big|S'\cup(\bigcup_{i=1}^{\mmm'}S_{i})\Big|-n+l\\
	\leq &-l+\Big[n-\Big|S'\cup(\bigcup_{i=1}^{\mmm'+1}S_{i})\Big|\Big]+\Big|S'\cup(\bigcup_{i=1}^{\mmm'}S_{i})\Big|-n+l\\
	=&-\Big|S_{\mmm'+1}\setminus(\bigcup_{k=1}^{\mmm'}S_k)\setminus S'\Big|.
	\end{align*}
\end{proof}
Inequality (\ref{generalization2}) reduces to (\ref{nestedmix2}) when $\cS$ is nested by taking ${\mmm}=m-p+1$ and $S_{[i]}=S_{p+i-1}$ for $i=1,\ldots,{\mmm}$.

\section{Conclusions}\label{sec:conclusions}
In this paper, we study the convex hull of the multilinear set with (two-sided) cardinality constraints and give a polyhedral characterization of it when the  sets involved have a nested structure.
We first show that the convex hull can be obtained by intersecting the convex hulls of two simpler sets, each with one sided cardinality constraints. Convex hull of one of these sets ($\conv(\XJd)$) has already been characterized earlier in \cite{fischer2018matroid}. 
The description of the second set ($\conv(\XJD)$) is new. 
The two descriptions bear some resemblance due to the fact that the inequalities involved can be derived using the mixing procedure starting with different base inequalities. 
To the best of our knowledge, the similarity between the inequality descriptions of the two sets does not imply that one of the sets can be used (via a complementation) to obtain the other. The authors of \cite{fischer2018matroid} also agree with this assessment.

For the general (non-nested) case, we are able to derive a family of valid inequalities that generalize the inequalities for the nested case. 
Derivation of these inequalities do not involve  the mixing procedure.  
These inequalities do not necessarily yield the convex hull as the polyhedral structure of the general case seems to be significantly more complicated even when only two non-nested sets are involved \cite{chen2021multilinear}.


See also \cite{fischer2020matroid}, where  Fischer, Fischer and McCormick extend their earlier work on matroids by considering multilinear terms  defined by all subsets of a fixed subset of the ground set instead of nested sets. Note that all subsets of a set form a proper family by  Proposition~\ref{prop:prop}  provided that $\DJDd=\Delta^{0,n}$, which is one of the assumptions in  \cite{fischer2020matroid}.

\bibliographystyle{plain}
\bibliography{ref}

\newpage

\section*{Appendix}
In this section we present the full proofs of Lemma \ref{dimlem} and Theorems \ref{UBfacetthm2} and \ref{Thm:L3}.
\subsubsection*{Proof of Lemma \ref{dimlem}}
\begin{proof}
	Given $z\in\{0,1\}^{n}$ satisfying $l\leq\sum_{j\in J}z_j\leq u$, there exists a unique $\delta$ such that $(z,\delta)\in\XJDd$. Therefore, given any $U\subseteq J$ with $l\leq|U|\leq u$, we can define the corresponding point $v^U$ as follows: 
	\begin{displaymath}v^U=(z^U,\mydelta^U)\in\XJDd\text{~~~where~~~}
	z^U_j=\left\{\begin{array}{ll}
	1, & \text{if }j\in U,\\
	0, & \text{otherwise},
	\end{array}
	\right.
	~~~\text{and}~~~~~
	\mydelta^U_i=	\prod_{j\in S_i}(1-z^U_j).
	\end{displaymath}
	For $j=1,\ldots,n$, let $e^j\in\R^n$ denote the $j$-th unit vector in $\R^n$. 
	In addition, let $0_m\in\R^m$ denote the vector of all zeros, and for $i=1,\ldots,m$, let $d^i\in\R^m$ denote the vector whose first $i$ components are one and the rest are zero. 
	We now consider 2 cases:

	\noindent{\bf Case 1: } Assume $l=0$. In this case, we will argue that the following $m+n+1$ points in $\conv(\XJD)$ are  affinely independent: 
	$$ 	v^{\{2\}}	=\begin{bmatrix}	~e^2~	\\		~0_m~	\end{bmatrix},~~	
	v^{\{k_i+1\}}	=\begin{bmatrix}	~ e^{k_i+1}~\\	~~d^i~		\end{bmatrix}~~\text{for}~ i\in I,~~
	v^{\{1\}} 	=\begin{bmatrix}	~ e^1~	\\	0_m	\end{bmatrix},~~
	v^{\{1,j\}} 	=\begin{bmatrix}	~ e^1+e^j~	\\	~~0_m	\end{bmatrix}~~\text{for}~ j\in J\setminus\{1\}. 
	$$
	Clearly these points are in $\XJD$ and together they form the following matrix  $V\in \R^{(m+n)\times(m+n+1)}$:
	\begin{equation}
	V=\left[\begin{array}{c|c|c}
	~e^2~& ~~~\mathbbm{K}~~&\begin{array}{c} \mathbbm{1}^T_n\\\hline 0_{(n-1)}~ |~~~~\mathbbm{I}_{(n-1)}~~\end{array}\\\hline
	0_m& \mathbb{D}&  \mathbb{O}_{m\times n}	\end{array}\right]
	\label{matrixV}
	\end{equation}
	where 	$\mathbbm{1}_{*}\in\R^{*}$ is a vector/matrix of all ones, $0_{*}\in\R^{*}$ is a vector of all zeros, and, $\mathbb{O}_{*}$ and $\mathbbm{I}_{*}$, respectively, denote the matrix of all zeros and the identity matrix of the specified dimension.
	The $i$-th column of the matrix $ \mathbbm{K} \in\R^{n\times m}$ is equal to $e^{k_i+1}$, and $i$-th column of  $ \mathbbm{D}\in \R^{m\times m}$ is $d^i$. 
	Note that $ \mathbbm{D}$ is an upper triangular matrix with all ones on and above the diagonal. 
	
	To show that the columns of $V$ are affinely independent, we need to argue that the unique solution to the  system of equations: 
	\begin{equation}	V\lambda=0,~~~~~\sum_{t=1}^{m+n+1}\lambda_t=0\label{eq:aff}\end{equation}
	is $\lambda=0$. 
	Note that the first row of  $ \mathbbm{K} $ is all zeros and therefore the first row of $V$ has $m+1$ consecutive zeros followed by $n$ ones.
	Therefore, the first row of  $V\lambda=0$ implies that $\sum_{t=m+2}^{m+n+1}\lambda_t=0$ and consequently $\sum_{t=1}^{m+1}\lambda_t=0$.
	
	As $ \mathbbm{D}$ is an upper triangular matrix of ones,  the last $m$ rows of $V\lambda=0$ imply that 
	$$0~=\sum_{t=2}^{m+1}\lambda_t=\sum_{t=3}^{m+1}\lambda_t=\ldots=\sum_{t=m+1}^{m+1}\lambda_t=0$$
	and therefore $\lambda_t=0$ for $t=2,\ldots,m+1$.
	Moreover,  $\sum_{t=1}^{m+1}\lambda_t=0$, implies that $\lambda_1=0$ as well.
	
	As the first $m+1$ components of $\lambda$ have to be zero,  the first $n$ rows of $V\lambda=0$ now imply that 
	$$\sum_{t=m+2}^{m+n+1}\lambda_t=0,~~ \text{ and } ~~\lambda_t=0~\text{ for } t=m+3,\ldots, m+n+1.$$
	Using the first equation, we have  $\lambda_{m+2}=0$ as well and the columns of $V$ are indeed affinely independent.

	\noindent{\bf Case 2: }Assume $l\geq 1$. In this case, we let $Q=\{n-l+1,\ldots,n\}\subseteq J\setminus S_m$, where $|Q|=l$.
	We now consider the following $m+n+1$ points in $\conv(\XJDd)$:
	$$ 	v^{\{2\}\cup Q}	,~~~	v^{\{k_i+1\}\cup Q}~~\text{for}~ i\in I,~~~
	v^{\{1\}\cup Q} ,~~~
	v^{\{1,j\}\cup Q\setminus\{n\}}~~\text{for}~ j\in J\setminus\{1\}\setminus Q,~~~
	v^{\{1\}\cup Q\setminus\{j\}}~~\text{for}~ j\in Q.
	$$
	These points form the matrix
	$$ V^l=\left[\begin{array}{c|c|c}
	~e^2+\sum_{i\in Q}e^i~& ~~~\mathbbm{K'}~~&
	\begin{array}{c} 
	\mathbbm{1}^T_n\\\hline
	\begin{array}{c|c|c}	 
	0_{(n-l-1)} &~~~~\mathbbm{I}_{(n-l-1)}~~&\mathbb{O}_{(n-l-1)\times l}\\\hline 
	\mathbbm{1}_{l}
	&\begin{matrix}\mathbbm{1}_{(l-1)\times( n-l-1)}\\\hline0^T_{(n-l-1)}\end{matrix}
	&\mathbbm{1}_{l\times l}-\mathbbm{I}_{l}
	\end{array}
	\end{array}\\\hline
	0_m& \mathbb{D}&  \mathbb{O}_{m\times n}	\end{array}\right],
	$$
	where $\mathbbm{K'}$ is a matrix with all entries of its first row being zero and $\mathbbm{D}$ is the upper triangular matrix described in \eqref{matrixV}.
	
	As in Case 1, we first observe that  the first row of $V^l$ has $m+1$ consecutive zeros followed by $n$ ones and argue that  $\sum_{t=m+2}^{m+n+1}\lambda_t=0$ and  $\sum_{t=1}^{m+1}\lambda_t=0$.
	In addition, as the last $m$ rows of $V^l$ are the same as $V$, we also conclude that the first $m+1$ components of of $\lambda$ have to be zero.
	
	Finally,  note that the $n$ by $n$ matrix on the upper right corner of $V^l$ is nonsingular as 
	adding rows 2 to $n-l$ of this matrix to the last ($n$-th) row and then 
	subtracting its first row from each one of the last $l$ rows leads to the upper triangular matrix:
	$$\left[
	\begin{array}{c} 
	\mathbbm{1}^T_n\\\hline
	\begin{array}{c|c|c}	 
	0_{(n-l-1)} &~~~~\mathbbm{I}_{(n-l-1)}~~&\mathbb{O}_{(n-l-1)\times l}\\\hline 
	0_{l}&\mathbbm{O}_{l\times (n-l)}&-\mathbbm{I}_{l}
	\end{array}
	\end{array}\right].$$
	
	Therefore, we conclude that $\conv(\XJDd)$ is full-dimensional.
\end{proof}

\subsubsection*{Proof of Theorem \ref{UBfacetthm2}}
\begin{proof}
	Assume  that $S'\subseteq J$ and $p\in I$ satisfy the conditions above.
	As $S'\supseteq S_p\supseteq S_1$, we can assume $S'=\{s_1,s_2,s_3,s_4,\ldots,s_{|S'|} \}$ where $s_1=1$, $s_2=2$ and $2<s_3<s_4<\ldots<s_{|S'|}$. We first show that the inequality\begin{equation}\label{lift0}
	\sum_{j\in S'}z_j+(u-|S'\setminus S_p|)\delta_p\leq u
	\end{equation}
	defines an $(n+p-1)$-dimensional face of $\conv(\XJDd)\cap\{(z,\delta):\delta_i=0,i\in\{p+1,\ldots,m\} \}$. Let $Q=\{s_{|S'|-u+1},\ldots,s_{|S'|}\}$. Then $Q$ satisfies $|Q|=u\geq 2$ and $S'\setminus S_p\subset Q\subseteq S'\setminus S_{p-1}$. Note that $p<m$ or $|S_m|<n-l$ implies that $|J\setminus S_p|\geq l+1$. Let $R$ be a set satisfying $S'\setminus S_p\subseteq R\subseteq J\setminus S_p$ and $|R|=\max\{l+1,|S'\setminus S_p|\}\in[l+1,u]$. Define\begin{displaymath}
	R'=\left\{\begin{array}{ll}
	R, & \text{if }l+1\leq|S'\setminus S_p|(\leq u-1)\text{, i.e., }R=S'\setminus S_p;\\
	R\setminus\{j_0\}\text{ for some fixed }j_0\in R\setminus S', & \text{if }l+1>|S'\setminus S_p|\text{, i.e., }R\setminus S'\neq \emptyset.
	\end{array}
	\right.
	\end{displaymath}
	Then $R'$ satisfies $R'\supseteq S'\setminus S_p$ and $l\leq|R'|\leq u-1$. Let $T$ be a set satisfying $|T|=u-1$ and $T\subseteq S'\setminus S_{p-1}$.
	
	Consider points (using Definition \ref{def:pts}) associated with the sets\begin{gather}
	Q,~~(Q\cup\{1\})\setminus\{j\} ~~\text{for}~ j\in Q,~~(Q\setminus\{s_{|S'|-1},s_{|S'|} \})\cup\{1,j\} ~~\text{for}~ j\in S'\setminus Q\setminus\{1\},\label{Qsets}\\
	R,~~~R\setminus\{j\}~~\text{for}~ j\in R\setminus S',~~~R'\cup\{j\} ~~\text{for}~j\in J\setminus R\setminus S_p,\label{Rsets}\\
	(Q\setminus\{s_{|S'|} \})\cup\{2\}~~\text{if}~p\geq 2,~~~T\cup\{k_i\} ~~\text{for}~ {i\in\{2,\ldots,p-1 \}}.\label{pgeq2sets}
	\end{gather}
	Note that some of the index sets used for defining the sets in \eqref{pgeq2sets} can be empty, in which case the associated points are not considered but sets in \eqref{pgeq2sets}  would always contribute $p-1$ points in total. These $(n+p)$ points are feasible and satisfy $\delta_i=0$ for $i\in\{p+1,\ldots,m\}$ (as $(S'\setminus S_p)\cap S_{p+1}\neq\emptyset$ by assumption U4), and lie on the hyperplane $	\sum_{j\in S'}z_j+(u-|S'\setminus S_p|)\delta_p=u$ associated with inequality \eqref{lift0}.

	We next reorder the $(\{z_j\}_{j\in J},\delta_1,\ldots,\delta_p)$ coordinates of the points (see Figure \ref{fig:zjUB}) in the ordering\begin{displaymath}
	(\{z_j\}_{j\in S_p\setminus Q},\{z_j\}_{j\in Q},\{z_j\}_{j\in R\setminus Q},\{z_j\}_{j\in J\setminus(S_p\cup Q\cup R)},\delta_1,\ldots,\delta_p),
	\end{displaymath}
	and consider the matrix $V$ formed by these reordered coordinates of the (column) points. 
	
		\begin{figure}[H]
		\centering
		\begin{tikzpicture}[level/.style={},decoration={brace,mirror,amplitude=7}]
		\draw[|-|] (0,0) -- (2,0);
		\draw[|-|] (0,0) -- (3,0);
		\draw[|-|] (0,0) -- (5,0);
		\draw[|-|] (0,0) -- (8,0);
		\draw[dashed] (0,0) -- (0,-1);
		\draw[dashed] (3,0) -- (3,-1);
		\draw[dashed] (0,0) -- (0,1);
		\draw[dashed] (5,0) -- (5,1);
		\draw [decorate] (0,-0.1) --(2,-0.1) node[midway,below=3mm]{$S_{p-1}$};
		\draw [decorate] (0,-1) --(3,-1) node[midway,below=3mm]{$S_{p}$};
		\draw [decorate] (3,-0.1) --(6,-0.1) node[midway,below=3mm]{$R$};
		\draw [decorate] (5,1) -- (0,1) node[midway,above=3mm]{$S'$};
		\draw [decorate] (8,0.1) -- (5,0.1) node[midway,above=3mm]{$J\setminus S'$};
		\draw [decorate] (5,0.1) -- (2.5,0.1) node[midway,above=3mm]{$Q$};
		\end{tikzpicture}
		\caption{Reordered $z_j$ coordinates in the proof of Theorem \ref{UBfacetthm2}}
		\label{fig:zjUB}
	\end{figure}
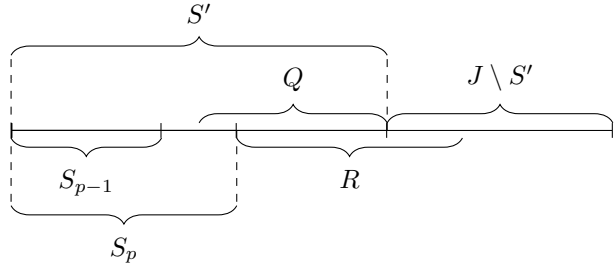
We would now argue that the unique solution to the system of equations\begin{equation}\label{UBVlameqn}
	V\lambda=0,~~\sum_{t=1}^{n+p}\lambda_t=0
	\end{equation}
	is $\lambda=0$.
	We separately consider two cases, namely $p\geq 2$ and $p=1$.
	
	First consider the case when $p\geq 2$. If this case, $|S'|\geq u+|S_{p-1}|\geq u+2$. Therefore, $\{1,2\}\cap Q=\emptyset$. We look at the matrix $V_p$ formed by the last $p$ rows of $V$ corresponding to the $\{\delta_i\}_{i=1}^p$ coordinates:\begin{displaymath}
	V_p=\left[\begin{array}{c|c|c|c|c|c|c}
	d^{p-1} & \mathbb{O}_{p\times (|S'|-1)} & \mathbbm{1}_{p\times(n+1-|S'|)} & 0_p & d^1 & \ldots & d^{p-2}
	\end{array}
	\right],
	\end{displaymath}
	where $d^i$ is defined in the proof of Lemma \ref{dimlem}. Equations $V_p\lambda=0$ imply $\lambda_1=0$ and $\lambda_{n+3}=\ldots=\lambda_{n+p}=0$. 
	\begin{figure}[H]
	\centering
	\begin{tikzpicture}[level/.style={},decoration={brace,mirror,amplitude=7}]
	\draw[dashed] (0,-1) -- (7,-1);
	\draw[dashed] (0,-1) -- (0,-2);
	\draw[dashed] (7,-1) -- (7,-2);
	\draw[dashed] (4.2,-1) -- (4.2,-2);
	\draw[dotted] (0.2,-2) -- (0.2,-3);
	\draw[dotted] (4.4,-2) -- (4.4,-3);
	\draw (0,-3) -- (7,-3);
	\draw (0,-2) -- (0,-3);
	\draw (0,-2) -- (7,-2);
	\draw (7,-3) -- (7,-2);
	\draw (0.2,-1) -- (4.4,-1);
	\draw (0.2,-1) -- (0.2,-2);
	\draw (4.4,-1) -- (4.4,-2);
	\draw [decorate] (2.1,-0.8) --(0,-0.8) node[midway,above=3mm]{Sets in \eqref{Qsets}};
	\draw [decorate] (4.2,-0.8) --(2.1,-0.8) node[midway,above=3mm]{Sets in \eqref{Rsets}};
	\draw [decorate] (7,-0.8) --(4.2,-0.8) node[midway,above=3mm]{Sets in \eqref{pgeq2sets}};
	\draw [decorate] (7.2,-2) --(7.2,-1) node[midway,right=3mm]{$z_j:j\in J$};
	\draw [decorate] (7.2,-3) --(7.2,-2) node[midway,right=3mm]{$\delta_i:i\in \{1,2,\ldots,p\}$};
	\node at (2.3,-1.7) {$\bar{V}\setminus V_p$};
	\node at (3.5,-2.5) {$V_p$};
	\node at (-1.2,-2) {$V~=~$};
	\end{tikzpicture}
	\caption{Matrix $V$ in the proof of Theorem \ref{UBfacetthm2}}
\end{figure}
%
%
	Therefore, (\ref{UBVlameqn}) reduces to equations\begin{equation}\label{UBredeqn}
	\bar{V}\bar{\lambda}=0,~~\sum_{t=2}^{n+2}\lambda_t=0
	\end{equation}
	where $\bar{V}$ is a matrix formed by columns $2$ to $n+2$ of $V$ and $\bar{\lambda}=(\lambda_2,\ldots,\lambda_{n+2})^T$. Note that matrix $\bar{V}$ is of the form\begin{displaymath}
	\left[\begin{array}{c|c|c}
	\begin{array}{c}
	\mathbbm{1}^T_{|S'|-1}\\ \hline
	\begin{array}{c|c}
	\mathbb{O}_{(|S_p\setminus Q|-1)\times |Q|}
	& \mathbbm{I}_{(|S'\setminus Q|-1)}\\
	\hline
	\mathbbm{1}_{|Q|\times|Q|}-\mathbbm{I}_{|Q|}
	& \begin{array}{c}
	\mathbbm{1}_{(|Q|-2)\times(|S'\setminus Q|-1)}\\ \hline
	\mathbb{O}_{2\times(|S'\setminus Q|-1)}
	\end{array}
	\end{array}\\
	\hline
	\mathbb{O}_{(n-|S'|)\times(|S'|-1)}
	\end{array} &
	\begin{array}{c}
	\mathbb{O}_{|S_p|\times(n+1-|S'|)}\\ \hline
	\begin{array}{c|c}
	\begin{array}{c|c}
	\mathbbm{1}_{|R|} &\begin{array}{c}
	\\[-.3cm]\mathbbm{1}_{|S'\setminus S_p|\times|R\setminus S'|}\\[.3cm] \hline
	\mathbbm{1}_{|R\setminus S'|\times|R\setminus S'|}-\\
	\mathbbm{I}_{|R\setminus S'|}
	\end{array}
	\end{array} & *\\ \hline
	\mathbb{O}_{|J\setminus R\setminus S_p|\times(|R\setminus S'|+1)} & \mathbb{I}_{|J\setminus R\setminus S_p|}
	\end{array}
	\end{array} & \begin{array}{c}
	0\\ \hline
	1\\ \hline
	0_{|S'\setminus Q|-2}\\ \hline
	\mathbbm{1}_{|Q|-1}\\ \hline
	0_{n+1-|S'|}
	\end{array}
	\\
	\hline
	\mathbb{O}_{p\times (|S'|-1)} & \mathbbm{1}_{p\times(n+1-|S'|)} & 0_p
	\end{array}
	\right].
	\end{displaymath}
	By looking at the first and the last row of $\bar{V}\bar{\lambda}=0$ and $\sum_{t=2}^{n+2}\lambda_t=0$, we have $\lambda_{n+2}=0$.
	It is then easy to verify that $\bar{V}\bar{\lambda}=0$ and $\lambda_{n+2}=0$ imply $\lambda_{|Q|+2}=\ldots=\lambda_{|S'|}=0$ and $\lambda_{n-|J\setminus R\setminus S_p|+2}=\ldots=\lambda_{n+1}=0$ by looking at the $\{z_j\}_{j\in S'\setminus Q\setminus\{1\}}$ and $\{z_j\}_{j\in J\setminus R\setminus S_p}$ coordinates, respectively. The remaining columns of $\bar{V}$ are of the form:\begin{displaymath}
	\left[\begin{array}{c|c}
	\begin{array}{c}
	\mathbbm{1}^T_{|Q|}\\ \hline
	\mathbb{O}_{(|S_p\setminus Q|-1)\times |Q|}\\
	\hline
	\mathbbm{1}_{|Q|\times|Q|}-
	\mathbbm{I}_{|Q|}\\
	\hline
	\mathbb{O}_{(n-|S'|)\times|Q|}
	\end{array} &
	\begin{array}{c}
	\mathbb{O}_{|S_p|\times(|R\setminus S'|+1)}\\ \hline
	\begin{array}{c}
	\begin{array}{c|c}
	\mathbbm{1}_{|R|} &\begin{array}{c}
	\mathbbm{1}_{|S'\setminus S_p|\times|R\setminus S'|}\\ \hline
	\mathbbm{1}_{|R\setminus S'|\times|R\setminus S'|}-\\
	\mathbbm{I}_{|R\setminus S'|}
	\end{array}
	\end{array}
	\end{array}\\ \hline
	\mathbb{O}_{|J\setminus R\setminus S_p|\times(|R\setminus S'|+1)}
	\end{array}
	\\
	\hline
	\mathbb{O}_{p\times |Q|} & \mathbbm{1}_{p\times(|R\setminus S'|+1)}
	\end{array}
	\right].
	\end{displaymath}
	By looking at the $z_1,\{z_j\}_{j\in Q\cup R},\delta_p$ coordinates:\begin{displaymath}
	\left[
	\begin{array}{c|c}
	\mathbbm{1}^T_{|Q|} & 0^T_{|R\setminus S'|+1}\\ \hline
	\mathbbm{1}_{|Q|\times|Q|}-
	\mathbbm{I}_{|Q|} & \begin{array}{c}
	\mathbb{O}_{|Q\cap S_p|\times(|R\setminus S'|+1)}\\ \hline
	\mathbbm{1}_{|R\cap Q|\times(|R\setminus S'|+1)}
	\end{array}\\ \hline
	\mathbb{O}_{|R\setminus S'|\times|Q|} & \begin{array}{c|c}
	\mathbbm{1}_{|R\setminus S'|} & \begin{array}{c}
	\mathbbm{1}_{|R\setminus S'|\times|R\setminus S'|}
	-\mathbbm{I}_{|R\setminus S'|}
	\end{array}
	\end{array}\\ \hline
	0^T_{|Q|} & \mathbbm{1}^T_{|R\setminus S'|+1}
	\end{array}
	\right],
	\end{displaymath}
	we can finally conclude that the unique solution of \eqref{UBredeqn} is $\bar{\lambda}=0$ as these columns are linearly independent. 
	
	When $p=1$, sets defined in \eqref{pgeq2sets} would disappear and the matrix $V$ is of the form:
	\begin{displaymath}
	\left[\begin{array}{c|c}
	\begin{array}{c}
	\\[-.3cm]\mathbbm{1}^T_{|S'|}\\[.3cm] \hline
	\begin{array}{c|c}
	\mathbb{O}_{(|S_p\setminus Q|-1)\times (|Q|+1)}
	& \mathbbm{I}_{(|S'\setminus Q|-1)}\\
	\hline\begin{array}{c|c}
	\mathbbm{1}_{|Q|} & \mathbbm{1}_{|Q|\times|Q|}-\mathbbm{I}_{|Q|}
	\end{array}
	& \begin{array}{c}
	\mathbbm{1}_{(|Q|-2)\times(|S'\setminus Q|-1)}\\ \hline
	\mathbb{O}_{2\times(|S'\setminus Q|-1)}
	\end{array}
	\end{array}\\
	\hline
	\mathbb{O}_{(n-|S'|)\times|S'|}
	\end{array} &
	\begin{array}{c}
	\mathbb{O}_{|S_p|\times(n+1-|S'|)}\\ \hline
	\begin{array}{c|c}
	\begin{array}{c|c}
	\mathbbm{1}_{|R|} &\begin{array}{c}
	\mathbbm{1}_{|S'\setminus S_p|\times|R\setminus S'|}\\ \hline
	\mathbbm{1}_{|R\setminus S'|\times|R\setminus S'|}-\\
	\mathbbm{I}_{|R\setminus S'|}
	\end{array}
	\end{array} & *\\ \hline
	\mathbb{O}_{|J\setminus R\setminus S_p|\times(|R\setminus S'|+1)} & \mathbb{I}_{|J\setminus R\setminus S_p|}
	\end{array}
	\end{array}
	\\
	\hline
	0^T_{|S'|} & \mathbbm{1}^T_{n+1-|S'|}
	\end{array}
	\right].
	\end{displaymath}
	We get a matrix of the structure similar to the submatrix of $V$ formed from its first $n+1$ rows and first $n+1$ columns.	We can verify that in this case the columns of $V$ are affinely independent based on the discussion for the $p\geq 2$ case.
	
	Therefore, the given points are affinely independent and inequality (\ref{lift0}) defines an $(n+p-1)$-dimensional face of $\conv(\XJDd)\cap\{(z,\delta):\delta_i=0,i\in\{p+1,\ldots,m\} \}$.
	
	We finish the proof by lifting the coefficients of $\delta_{p+1},\ldots,\delta_{m}$. By validity of (\ref{nestedmix}), the following inequalities are valid:\begin{align*}
	|S'\setminus S_{m'}|-|S'\setminus S_{m'+1}|\leq u-\max\Big\{&\sum_{j\in S'}z_j+(u-|S'\setminus S_p|)\delta_p+\sum_{i=p+1}^{m'}(|S'\setminus S_{i-1}|-|S'\setminus S_i|)\delta_i:\\
	&(z,\delta)\in\XJDd,\delta_{m'+1}=1,\delta_{i}=0,i>m'+1\Big\},~~m'=p,\ldots,m-1.
	\end{align*}
	And the above inequalities hold at equality for the points of $(S'\setminus S_{m'+1})\cup Q_{m'}$ for $m'=p,\ldots,m-1$, respectively. Here $Q_{m'}=J\setminus S'\setminus S_{m'+1}$ if $|J\setminus S_{m'+1}|\leq u$. Otherwise, we construct $Q_{m'}\subset J\setminus S'\setminus S_{m'+1}$ (see Figure \ref{fig:QmUB}) such that\begin{enumerate}
		\item $|Q_{m'}|=\min\{u-|S'\setminus S_{m'+1}|,|J\setminus S'\setminus S_{m'+1}|\}$, (this implies $|(S'\setminus S_{m'+1})\cup Q_{m'}|\geq l$ as either $|(S'\setminus S_{m'+1})\cup Q_{m'}|=u$, or $|(S'\setminus S_{m'+1})\cup Q_{m'}|=|J\setminus S_{m'+1}|\geq|J\setminus S_m|\geq l$),
		\item $((S'\setminus S_{m'+1})\cup Q_{m'})\cap S_{m'+2}\neq \emptyset$ if $m'<m-1$.
	\end{enumerate}
	\begin{figure}[H]
		\centering
		\begin{tikzpicture}[level/.style={},decoration={brace,mirror,amplitude=7}]
		\draw[|-|] (0,0) -- (1,0);
		\draw[|-|] (0,0) -- (3,0);
		\draw[|-|] (0,0) -- (5,0);
		\draw[dashed] (2,0) -- (2,-1);
		\draw[dashed] (4,0) -- (4,-1);
		\draw [decorate] (0,-0.1) --(2,-0.1) node[midway,below=3mm]{$S_{m'+1}$};
		\draw [decorate] (2,-1) --(4,-1) node[midway,below=3mm]{$(S'\setminus S_{m'+1})\cup Q_{m'}$};
		\draw [decorate] (3,-0.1) --(4,-0.1) node[midway,below=3mm]{$Q_{m'}$};
		\draw [decorate] (3,0.1) -- (1,0.1) node[midway,above=3mm]{$S'$};
		\draw [decorate] (5,0.1) -- (3,0.1) node[midway,above=3mm]{$J\setminus S'\setminus S_{m'+1}$};
		\end{tikzpicture}
		\caption{Construction of $Q_{m'}$ in the proof of Theorem \ref{UBfacetthm2}}
		\label{fig:QmUB}
	\end{figure}
	In total, we find $(n+m)$ affinely independent points lying on the hyperplane\begin{displaymath}
	\sum_{j\in S'}z_j+ \Big(u-|S'\setminus S_p|\Big)\delta_p+\sum_{i=p+1}^m\Big(|S'\setminus S_{i-1}|-|S'\setminus S_i|\Big)\delta_i=u.
	\end{displaymath}
	Therefore, inequality (\ref{nestedmix}) is facet-defining.
\end{proof}


\subsubsection*{Proof of Theorem \ref{Thm:L3}}

\begin{proof}
	Assume  that $S'\subseteq J$ and $p\in I$ satisfy the conditions above. Then the assumption $p<m$ or $|S_m|<n-l$ implies that $|S_p|<n-l$, and $S'\neq\emptyset$ as $|S'\cup S_p|> n-l$. Assume $S'=\{s_1,\ldots,s_{|S'|}  \}$ with $s_1<\ldots<s_{|S'|}$. We first show that the inequality\begin{equation}\label{lift1}
	-\sum_{j\in S'}z_j+(|S'\cup S_p|-n+l)\delta_p\leq 0
	\end{equation}
	defines an $(n+p-1)$-dimensional face of $\conv(\XJDd)\cap\{(z,\delta):\delta_i=0,i\in\{p+1,\ldots,m\} \}$.
	
	Let $Q=S_p\cup\{s_1,s_2,\ldots,s_{n-l-|S_p|} \}$. Then $Q$ satisfies $|Q|=n-l$ and $S_p\subset Q\subset S_p\cup S'$. Let $R=(J\setminus S'\setminus S_p)\cup\{1,2,\ldots,l+1-|J\setminus S'\setminus S_p|\}$. Then $R$ satisfies $|R|=l+1\leq u$ and $(J\setminus S'\setminus S_p)\cup\{1,2\}\subseteq R\subseteq J\setminus S'$ as $n-l<|S'\cup S_p|$. Note that $|S'\cup S_{p-1}|\leq n-l$. For $i\in\{1,\ldots,p-1\}$, we can let $T_i$ denote the first $l$ elements of $J\setminus S'\setminus S_i$.
	Consider the points (using Definition \ref{def:pts}) associated with the sets\begin{gather}
	 	R\setminus \{j\} ~\text{for}~  {j\in R},~~R,~~(R\setminus\{1\})\cup \{j\} ~\text{for}~{j\in S_p\setminus R},\label{sets1}\\
	(J\setminus Q\setminus\{s_{|S'|}\})\cup\{j\}  ~\text{for}~{j\in Q\setminus S_p},~~(J\setminus Q\setminus\{j\})\cup\{s_1\}  ~\text{for}~ {j\in S'\setminus Q\setminus\{s_{|S'|} \}},~~J\setminus Q,\label{sets2}\\
	 T_i ~\text{for}~ {i\in\{1,\ldots,p-1\}}.\label{sets3}
	\end{gather}
	These $(n+p)$ points are feasible with $\delta_i=0,i\in\{p+1,\ldots,m \}$ (as $(J\setminus S'\setminus S_p)\cap S_{p+1}\neq\emptyset$ by assumption L3), and lie on the hyperplane $	-\sum_{j\in S'}z_j+(|S'\cup S_p|-n+l)\delta_p=0.$
	\begin{figure}[H]
		\centering
		\begin{tikzpicture}[level/.style={},decoration={brace,mirror,amplitude=7}]
		\draw[|-|] (0,0) -- (2,0);
		\draw[|-|] (0,0) -- (4,0);
		\draw[|-|] (0,0) -- (7,0);
		\draw[dashed] (2,0) -- (2,1);
		\draw[dashed] (4,0) -- (4,1);
		\draw [decorate] (2,0.1) --(0,0.1) node[midway,above=3mm]{$S_{p}(\subset Q)$};
		\draw [decorate] (2,-0.1) --(3,-0.1) node[midway,below=3mm]{$S_{p+1}\setminus S_p\setminus S'$};
		\draw [decorate] (4,-0.1) --(6,-0.1) node[midway,below=3mm]{$Q\setminus S_p$};
		\draw [decorate] (7,0.1) -- (4,0.1) node[midway,above=3mm]{$S'$};
		\draw [decorate] (4,1) -- (2,1) node[midway,above=3mm]{$J\setminus S_p\setminus S'(\subset R)$};
		\draw [decorate] (0,-0.1) -- (1,-0.1) node[midway,below=3mm]{$R\cap S_p$};
		\end{tikzpicture}
		\caption{Reordered $z_j$ coordinates in the proof of Theorem \ref{Thm:L3}}
		\label{fig:zjLB}
	\end{figure}
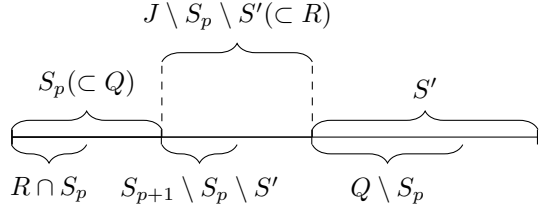
	We reorder the $(\{z_j\}_{j\in J},\delta_1,\ldots,\delta_p)$ coordinates of the points (see Figure \ref{fig:zjLB}) in the ordering\begin{displaymath}
	(\{z_j\}_{j\in S_p},\{z_j\}_{j\in J\setminus(S_p\cup S')},\{z_j\}_{j\in S'},\delta_1,\ldots,\delta_p),
	\end{displaymath}
	and consider the matrix $V$ formed by these reordered coordinates of the (column) points. 
		\begin{figure}[H]
		\centering
		\begin{tikzpicture}[level/.style={},decoration={brace,mirror,amplitude=7}]
		\draw (-1,0) -- (-1,-3);
		\draw (-1,0) -- (2.2,0);
		\draw (2.2,0) -- (2.2,-1);
		\draw (-1,-1) -- (4.2,-1);
		\draw (-1,-2) -- (7,-2);
		\draw (4.2,-1) -- (4.2,-2);
		\draw (-1,-3) -- (7,-3);
		\draw (7,-2) -- (7,-3);
		\draw[dashed] (2.2,0) -- (7,0);
		\draw[dashed] (4.2,-1) -- (7,-1);
		\draw[dashed] (4.2,0) -- (4.2,-1);
		\draw[dashed] (7,0) -- (7,-2);
		\draw [decorate] (7.2,-1) --(7.2,0) node[midway,right=3mm]{$z_j:j\in J\setminus S'$};
		\draw [decorate] (7.2,-2) --(7.2,-1) node[midway,right=3mm]{$z_j:j\in S'$};
		\draw [decorate] (7.2,-3) --(7.2,-2) node[midway,right=3mm]{$\delta_i:i\in\{1,2,\ldots,p\}$};
		\draw [decorate] (2.2,0.2) --(-1,0.2) node[midway,above=3mm]{Sets in \eqref{sets1}};
		\draw [decorate] (4.2,0.2) --(2.2,0.2) node[midway,above=3mm]{Sets in \eqref{sets2}};
		\draw [decorate] (7,0.2) --(4.2,0.2) node[midway,above=3mm]{Sets in \eqref{sets3}};
		\node at (0.6,-.51) {$\bar{\bar{V}}$};
		\node at (1.6,-1.53) {$\bar{V}^{S'}$};
		\node at (3,-2.5) {$V_p$};
		\node at (-2,-1.7) {$V~=~$};
		\end{tikzpicture}
		\caption{Matrix $V$ in the proof of Theorem \ref{Thm:L3}}
	\end{figure}
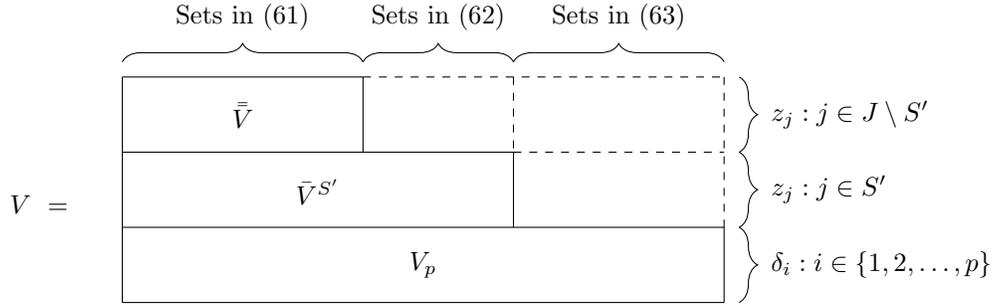
	We will argue that the unique solution to the system of equations\begin{equation}\label{VlamEqn}
	V\lambda=0,~~\sum_{t=1}^{n+p}\lambda_t=0
	\end{equation}
	is $\lambda=0$. First consider matrix $V_{p}$ formed by the last $p$ rows of $V$ corresponding to the $\{\delta_i\}_{i=1}^p$ coordinates: \begin{displaymath}
	V_{p}=\left[\begin{array}{c|c|c|c|c}
	\mathbb{O}_{p\times(|J\setminus S'|+1)} & \mathbbm{1}_{p\times |S'|} & d^1 & \ldots & d^{p-1}
	\end{array}
	\right],
	\end{displaymath}
	where $d^i$ is defined in the proof of Lemma \ref{dimlem}. Equations $V_p\lambda=0$ imply $\lambda_{n+2}=\ldots=\lambda_{n+p}=0$ and $\lambda_{|J\setminus S'|+2}+\ldots+\lambda_{n+1}=0$.
	Therefore, (\ref{VlamEqn}) reduces to equations\begin{displaymath}
	\bar{V}\bar{\lambda}=0,~~\sum_{t=1}^{n+1}\lambda_t=0
	\end{displaymath}
	where $\bar{V}$ is the matrix formed by the first $n+1$ columns of $V$ and $\bar{\lambda}=(\lambda_1,\ldots,\lambda_{n+1})^T$.
	Then we write down the matrix $\bar{V}^{S'}$ formed by rows $|J\setminus S'|+1$ to $|J|$ of $\bar{V}$:\begin{displaymath}
	\bar{V}^{S'}=\left[
	\begin{array}{c|c}
	\mathbb{O}_{|S'|\times(|J\setminus S'|+1)} & \begin{array}{c|c|c}
	\mathbbm{I}_{|S'\cap Q|} & 
	\begin{array}{c}
	\mathbbm{1}^T_{|S'\setminus Q|-1}\\ \hline
	\mathbb{O}_{(|S'\cap Q|-1)\times(|S'\setminus Q|-1)}
	\end{array}
	& 0_{|S'\cap Q|}\\ \hline
	\begin{array}{c}
	\mathbbm{1}_{(|S'\setminus Q|-1)\times|S'\cap Q|}\\ \hline
	0^T_{|S'\cap Q|}
	\end{array}
	& 
	\begin{array}{c}
	\mathbbm{1}_{(|S'\setminus Q|-1)\times (|S'\setminus Q|-1)}-\mathbbm{I}_{(|S'\setminus Q|-1)}\\ \hline
	\mathbbm{1}^T_{|S'\setminus Q|-1}
	\end{array}
	& \mathbbm{1}_{|S'\setminus Q|}
	\end{array}
	\end{array}
	\right].
	\end{displaymath}
	\if0
	We can verify that the submatrix\begin{displaymath}
	\left[
	\begin{array}{c|c|c}
	\mathbbm{I}_{|S'\cap Q|} & 
	\begin{array}{c}
	\mathbbm{1}^T_{|S'\setminus Q|-1}\\ \hline
	\mathbb{O}_{(|S'\cap Q|-1)\times(|S'\setminus Q|-1)}
	\end{array}
	& 0_{|S'\cap Q|}\\ \hline
	\begin{array}{c}
	\mathbbm{1}_{(|S'\setminus Q|-1)\times|S'\cap Q|}\\ \hline
	0^T_{|S'\cap Q|}
	\end{array}
	& 
	\begin{array}{c}
	\mathbbm{1}_{(|S'\setminus Q|-1)\times (|S'\setminus Q|-1)}-\mathbbm{I}_{(|S'\setminus Q|-1)}\\ \hline
	\mathbbm{1}^T_{|S'\setminus Q|-1}
	\end{array}
	& \mathbbm{1}_{|S'\setminus Q|}
	\end{array}
	\right]
	\end{displaymath}
	is nonsingular. 
	\fi
	By looking at $\{z_j\}_{j\in S'\setminus Q}$ and $\{z_j\}_{j\in S'\cap Q}$ coordinates,
	equations $\bar{V}^{S'}\bar{\lambda}=0$ and $\lambda_{|J\setminus S'|+2}+\ldots+\lambda_{n+1}=0$ imply that $\lambda_{n+2-|S'\setminus Q|}=\ldots=\lambda_{n+1}=0$ and $\lambda_{|J\setminus S'|+2}=\ldots=\lambda_{n+1-|S'\setminus Q|}=0$. Therefore, (\ref{VlamEqn}) further reduces to equations\begin{displaymath}
	\bar{\bar{V}}\bar{\bar{\lambda}}=0,~~\sum_{t=1}^{|J\setminus S'|+1}\lambda_t=0
	\end{displaymath}
	where $\bar{\bar{V}}$ is the submatrix of $V$ formed from its first $|J\setminus S'|$ rows and first $|J\setminus S'|+1$ columns and $\bar{\bar{\lambda}}=(\lambda_1,\ldots,\lambda_{|J\setminus S'|+1})^T$. The matrix $\bar{\bar{V}}$ is of the form:
	\begin{displaymath}
	\left[
	\begin{array}{c|c|c|c}
	\mathbbm{1}_{|R\cap S_p|\times |R\cap S_p|}-\mathbbm{I}_{|R\cap S_p|} & \mathbbm{1}_{|R\cap S_p|\times |J\setminus S'\setminus S_p|} 
	& \mathbbm{1}_{|R\cap S_p|} & \begin{array}{c}
	0^T_{|S_p\setminus R|}\\ \hline
	\mathbbm{1}_{(|R\cap S_p|-1)\times |S_p\setminus R|}
	\end{array}\\ \hline
	\mathbb{O}_{|S_p\setminus R|\times |R\cap S_p|} & \mathbb{O}_{|S_p\setminus R|\times|J\setminus S'\setminus S_p|}  & 0_{|S_p\setminus R|} & \mathbbm{I}_{|S_p\setminus R|}\\ \hline
	\mathbbm{1}_{|J\setminus S'\setminus S_p|\times |R\cap S_p|} & \mathbbm{1}_{|J\setminus S'\setminus S_p|\times|J\setminus S'\setminus S_p|}-\mathbbm{I}_{|J\setminus S'\setminus S_p|}  & \mathbbm{1}_{|J\setminus S'\setminus S_p|} & \mathbbm{1}_{|J\setminus S'\setminus S_p|\times|S_p\setminus R|}
	\end{array}
	\right].
	\end{displaymath}
	Rows of $\bar{\bar{V}}$ with index $j\in J\setminus S'\setminus S_p$ together with $\sum_{t=1}^{|J\setminus S'|+1}\lambda_t=0$ imply $\lambda_{|R\cap S_p|+1}=\ldots=\lambda_{|R|}=0$.
	Rows of $\bar{\bar{V}}$ with index $j\in S_p\setminus R$ imply $\lambda_{|R|+2}=\ldots=\lambda_{|J\setminus S'|+1}=0$.	The rest of rows together with $\sum_{t=1}^{|J\setminus S'|+1}\lambda_t=0$ imply $\lambda_1=\ldots=\lambda_{|R\cap S_p|}=0$ and $\lambda_{|R|+1}=0$.
	Therefore, the given points are affinely independent and inequality (\ref{lift1}) defines an $(n+p-1)$-dimensional face of $\conv(\XJDd)\cap\{(z,\delta):\delta_i=0,i\in\{p+1,\ldots,m\} \}$.
	
	We finish the proof by lifting the coefficients of $\delta_{p+1},\ldots,\delta_m$. Define $S_{m+1}=J$ and $\delta_{m+1}=0$. By the validity of (\ref{nestedmix2}), for each $m'\in\{p,p+1,\ldots,m-1 \}$\begin{align*}
	|S'\cup S_{m'+1}|-|S'\cup S_{m'}|\leq -\max\Big\{-\sum_{j\in S'}z_j+(|S'\cup S_p|-n+&l)\delta_p+\sum_{i=p+1}^{m'}(|S'\cup S_i|-|S'\cup S_{i-1}|)\delta_i:\\
	&(z,\delta)\in\XJDd,\delta_{m'+1}=1,\delta_{i}=0,i>m'+1 \Big\}.
	\end{align*}
	Actually the above inequality holds at equality by taking $(z,\delta)$ as the points of $(J\setminus S'\setminus S_{m'+1})\cup L_{m'}$ for $m'=p,\ldots,m-1$, respectively. Here $L_{m'}\subset S'\setminus S_{m'+1}$ can be constructed by starting with an element in $S_{m'+2}$ if $(J\setminus S'\setminus S_{m'+1})\cap S_{m'+2}=\emptyset$ and then augmenting it to have cardinality $|S'\cup S_{m'+1}|-(n-l)~(\geq 1)$. Set $L_{m'}\subset S'\setminus S_{m'+1}$ (see Figure \ref{fig:LmLB}) satisfies\begin{enumerate}
		\item $|L_{m'}|=|S'\cup S_{m'+1}|-(n-l)=l-|J\setminus S'\setminus S_{m'+1}|$,
		\item $((J\setminus S'\setminus S_{m'+1})\cup L_{m'})\cap S_{m'+2}\neq\emptyset$ if $m'<m-1$.
	\end{enumerate}
	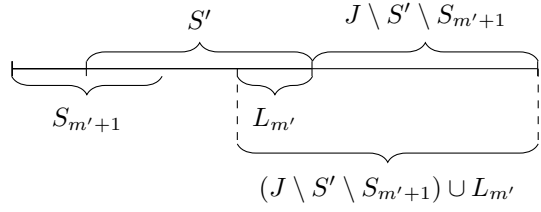
\begin{figure}[H]
		\centering
		\begin{tikzpicture}[level/.style={},decoration={brace,mirror,amplitude=7}]
		\draw[|-|] (0,0) -- (1,0);
		\draw[|-|] (0,0) -- (4,0);
		\draw[|-|] (0,0) -- (7,0);
		\draw[dashed] (3,0) -- (3,-1);
		\draw[dashed] (7,0) -- (7,-1);
		\draw [decorate] (0,-0.1) --(2,-0.1) node[midway,below=3mm]{$S_{m'+1}$};
		\draw [decorate] (3,-1) --(7,-1) node[midway,below=3mm]{$(J\setminus S'\setminus S_{m'+1})\cup L_{m'}$};
		\draw [decorate] (3,-0.1) --(4,-0.1) node[midway,below=3mm]{$L_{m'}$};
		\draw [decorate] (4,0.1) -- (1,0.1) node[midway,above=3mm]{$S'$};
		\draw [decorate] (7,0.1) -- (4,0.1) node[midway,above=3mm]{$J\setminus S'\setminus S_{m'+1}$};
		\end{tikzpicture}
		\caption{Construction of $L_{m'}$ in the proof of Theorem \ref{Thm:L3}}
		\label{fig:LmLB}
	\end{figure}
	In total, we find $(n+m)$ affinely independent points lying on the hyperplane\begin{displaymath}
	-\sum_{j\in S'}z_j+(|S'\cup S_p|-n+l)\delta_p+\sum_{i=p+1}^m(|S'\cup S_i|-|S'\cup S_{i-1}|)\delta_i=0.
	\end{displaymath}
	Therefore, inequality (\ref{nestedmix2}) is facet-defining.
\end{proof}

\end{document}